\documentclass{article}
\usepackage[left=3.6cm,right=3.6cm]{geometry}
\usepackage{amssymb,amsthm,amsmath,amsxtra,dsfont,appendix,cancel}
\usepackage[nottoc]{tocbibind}   
\usepackage[colorlinks=true, linkcolor=blue, citecolor=blue, urlcolor=blue]{hyperref}
\usepackage{mathrsfs}
\usepackage[colorinlistoftodos]{todonotes}

\usepackage[english]{babel}
\usepackage{natbib}

\usepackage{url}
\DeclareUrlCommand\email{\urlstyle{rm}}


\newcommand{\eq}{\begin{equation}}
\newcommand{\qe}{\end{equation}}

\newcommand{\cM}{\mathcal{M}}

\newcommand{\cO}{\mathcal{O}}
\newcommand{\cE}{\mathcal{E}}

\newcommand{\diag}{\mathrm{diag}}
\renewcommand{\d}{\mathrm{d}}
\newcommand{\e}{ \mathrm{e}}

\newcommand{\be}{\beta}

\newcommand{\bs}{\boldsymbol}

\renewcommand{\epsilon}{\varepsilon}

\newcommand{\Fb}{F_\beta^V}

\newcommand{\mub}{\mu_\beta^V}
\newcommand{\muh}{\mu_N}


\newcommand{\A}{\mathscr{A}}

\renewcommand{\d}{\mathrm{d}}

\newcommand{\sgn}{\operatorname{sgn}}

\newcommand{\Lip}{\operatorname{Lip}}

\renewcommand{\P}[1]{\mathds{P}\left[#1\right]}
\newcommand{\Co}{\mathcal{C}}
\renewcommand{\O}{\mathcal{O}}
\newcommand{\E}{\mathbb{E}} 

\renewcommand{\H}{\mathcal{K}} 
\newcommand{\Hi}{\mathsf{H}} 
\newcommand{\Ha}{\mathscr{H}}

\renewcommand{\L}{\mathbf{L}}

\newcommand{\N}{\mathds{N}}
\renewcommand{\P}{\mathbb{P}}
\newcommand{\R}{\mathds{R}}

\newcommand{\Z}{\mathds{Z}}

\newcommand{\T}{\mathds{T}}
\newcommand{\U}{H}
\newcommand{\x}{\mathbf{x}}

\newcommand{\ic}{\mathrm{i}}

\renewcommand{\d}{\mathrm{d}}
\renewcommand{\e}{ \mathrm{e}}

\newtheorem{theorem}{Theorem}[section]

\newtheorem{proposition}[theorem]{Proposition}
\newtheorem{corollary}[theorem]{Corollary}
\newtheorem{lemma}[theorem]{Lemma}

\theoremstyle{definition} \newtheorem{remark}{Remark}[section]

\title{CLT for Circular beta-Ensembles at High Temperature} 

\author{\ Adrien Hardy \footnote{Universit\'e de Lille, CNRS, UMR 8524, Inria - Laboratoire Paul Painlev\'e, F-59000 Lille, France. \newline Email: \href{mailto:adrien.hardy@univ-lille.fr}{\nolinkurl{adrien.hardy@univ-lille.fr}}}
  \qquad
\ Gaultier Lambert \footnote{
University of Zurich, Winterthurerstrasse 190, 8057 Z\"urich, Switzerland. 
\newline Email: \href{mailto:gaultier.lambert@math.uzh.ch}{\nolinkurl{gaultier.lambert@math.uzh.ch}}}}

\begin{document}

\maketitle

%
%

\begin{abstract}
We consider the macroscopic large $N$ limit of the Circular beta-Ensemble at high temperature, and its weighted version as well, 
in the regime where the inverse temperature scales as $\beta/N$ for some parameter $\beta>0$.
 More precisely, in the limit  $N\to\infty$, the equilibrium measure 
 of this particle system is described as the unique minimizer of a functional which interpolates between the relative entropy ($\beta=0$) and the weighted logarithmic energy ($\beta=\infty$). 
The purpose of this work is to show that the fluctuation of the empirical  measure around the equilibrium measure converges
towards a Gaussian field whose covariance structure interpolates between the Lebesgue $L^2$ ($\beta=0$) and the Sobolev $\Hi^{1/2}$ ($\beta=\infty$) norms. We furthermore obtain a rate of convergence for the fluctuations  in the $\mathrm W_2$ metric. Our proof  uses the  normal approximation result of \cite*{LLW17}, the Coulomb transport inequality of \cite*{CHM18}, and a  spectral analysis for the operator associated with the limiting covariance structure. 

\end{abstract}

{\small

\tableofcontents

}

\section{Introduction and statement of the results}

Let $\T:=[-\pi,\pi]\simeq \R/2\pi \Z$  be the one-dimensional torus  that we equip with the metric $(x,y)\mapsto |\e^{\ic x}-\e^{\ic y}|=|2\sin(\tfrac{x-y}{2})|$. Given an inverse temperature parameter $\beta>0$, the Circular-beta-ensemble  is a celebrated particle system from random matrix theory of $N$ particles on $\T$ with distribution
$$
\frac{1}{Z_{N}}\prod_{i< j}{|\e^{\ic x_i} - \e^{\ic x_j}|}^{\beta}\prod_{i=1}^N\frac{\d x_i}{2\pi}\, 
$$
where $Z_N>0$ is a normalization constant. This corresponds to the eigenvalues distribution of a unitary  Haar distributed random matrix when $\beta=2$. The macroscopic behavior of this particle system as $N\to\infty$ is well-known: the empirical measure
\eq
\label{empMeasure}
\muh:=\frac1N\sum_{i=1}^N\delta_{x_i}
\qe
converges almost surely (a.s) weakly towards the uniform mesure $\frac{\d x}{2\pi}$ on $\T$. The fluctuations of the particle system around the uniform measure can be described as well: for any smooth enough test function $\psi:\T\to\R$ satisfying $\int_\T\psi\,\frac{\d x}{2\pi}=0$, \cite{Johansson88} proved\footnote{
More precisely, the CLT in \citep{Johansson88} is stated for $\beta=2$, in which case it is equivalent to the strong Szeg\"{o} theorem for Toeplitz determinants, see for example \citep[Chapter 6]{Simon0512} or \citep{DIK13} for comprehensive expositions of this celebrated result. However, it is straightforward to check that the method of  \citep{Johansson88} still applies for any fixed $\beta>0$ provided that the test function $\psi$ is $\Co^{1+\alpha}$ for some $\alpha>0$. See also \citep[Theorem 1.2]{Lambert19} for a generalization to the mesoscopic scale. 
Let us also stress that, although one may believe this CLT holds true as soon as $\|\psi\|_{\Hi^{1/2}}<\infty$, a counterexample has been provided in \citep{Lambert19} when $\beta=4$.} 
the central limit theorem (CLT)
\eq
\label{CLTfixedBeta}
N \int \psi\, \d\mu_N=\sum_{i=1}^N\psi(x_i)\xrightarrow[N\to\infty]{\mathrm{law}}\mathcal N\big(0,\frac2\beta\|\psi\|_{\Hi^{1/2}}^2\big)\,  ,
\qe
where  the Sobolev semi-norm $\|\cdot\|_{\Hi^{1/2}}$ is defined by
\eq
\label{H1/2}
\|\psi\|^2_{\Hi^{1/2}}:=2\sum_{k=1}^\infty k\,|\hat\psi_k|^2.
\qe
Here   and in what follows $\hat\psi_k :=\int_\T \psi(x) \,\e^{-\ic kx}\,\frac{\d x}{2\pi}$ are the usual Fourier coefficients.  

\medskip

The aim of this work is to provide similar statements at high temperature, namely when $\beta$ goes to zero as $N\to\infty$.  Notice first that if  we take $\beta=0$, which corresponds to the infinite temperature setting, then the $x_i$'s are  independent random variables uniformly distributed on~$\T$. Thus the law of large numbers yields the a.s. weak convergence $\mu_N\to\frac{\d x}{2\pi}$ as $N\to\infty$ and the classical CLT states that, for any $L^2$ function $\psi:\T\to\R$ satisfying $\hat\psi_0=0$,
\eq
\label{CLTiid}
\sqrt{N} \int \psi\, \d\mu_N=\frac{1}{\sqrt{N}}\sum_{i=1}^N\psi(x_i)\xrightarrow[N\to\infty]{\mathrm{law}}\mathcal N\big(0,\|\psi\|_{L^2}^2\big)
\qe
where the $L^2$ norm reads 
\eq 
\label{L2normFourier}
\|\psi\|_{L^2}^2:=\int_\T \psi(x)\,\frac{\d x}{2\pi}=2\sum_{k=1}^\infty|\hat\psi_k|^2.
\qe
Notice the difference of normalization between \eqref{CLTfixedBeta} and \eqref{CLTiid}.

\medskip

As we shall see, there is a critical temperature regime of temperature where the  variance structure of the fluctuations interpolates between the Lebesgue $L^2$ and Sobolev $\Hi^{1/2}$ (semi-)norms, and this happens when $\beta$ is of order $1/N$. Thus, from now we consider  the particle system where we rescale the inverse temperature parameter as $\beta\mapsto 2\beta/N$,  the factor $2$ being cosmetic.
We also consider the case where the particle system is confined by an external potential $V$ and will show that 
the limiting variance depends on $V$ in a non trivial way. In contrast, in the usual fixed temperature setting, the variance is  expected to depend only on the support of the equilibrium measure. 

\medskip

The study of random matrix ensembles at \emph{high temperature} (i.e.~with an interaction strength of order $1/N$) was initiated by \cite*{ABG12} who described explicitly the crossover  for the density of state from the Wigner semicircle law to the Gaussian law. 
There are also several results about eigenvalues fluctuations in this regime \citep{BGP15,Trinh17,NakanoTrinh18,Pakzad18,NakanoTrinh19} whose study is motivated by the transition from random matrix to Poisson statistics, which is considered to be instrumental to describe the Anderson localization phenomenon. In particular  \cite{Trinh17} and \cite{NakanoTrinh18} obtained a CLT for the linear statistics of the Gaussian-beta-ensembles at this temperature regime, relying of the \cite{DumitriuEdelman} tridiagonal matrix representation for this particle system, although the limiting variance is not explicit.
The asymptotic behavior of the  largest eigenvalue of the Gaussian beta-ensembles at high temperature has been recently investigated in \citep{Pakzad19,Pakzad19b}.
Moreover, in \citep{Spohn19}, the asymptotic behavior of the generalized free energy of the Toda chain has also been related with certain statistics of the Dumitriu--Edelman model in the high temperature regime. 
 There are also a few results available in higher dimension for Coulomb gases  \citep{RougerieSerfaty, AkemanByun19} in this regime. 
Here we chose to focus instead on beta-ensembles on $\T$; that $\T$ is compact yields several technical simplifications in the proofs and a simple formula for the limiting variance.  However, let us mention that one could adapt our approach to tackle the setting of the Gaussian-beta-ensembles, and the beta-ensembles on $\R$ with a general potential as well, and provide an explicit formula for the limiting variance similar to the one that we will derive below.  

Let us also mention that an interesting result where fluctuations similar to
the one we obtain here has been previously derived by \cite{BodineauGuionnet} for a  two-component 2D plasma model. 


\medskip

 We now present the particle system we  investigate and our main results.

\paragraph{The particle system of interest.} 
For any $\beta>0$ and any  continuous potential $V:\T\to\R$, we consider $N$ random interacting particles on $\T$ with joint probability distribution
\eq
\label{def:law}
\d\P_N(x_1,\ldots,x_N):=\frac{1}{Z_{N}}\prod_{i< j}{|\e^{\ic x_i} - \e^{\ic x_j}|}^{\frac{2\beta} N}\prod_{i=1}^N\e^{-V(x_i)}\frac{\d x_i}{2\pi}
\qe
where $Z_N>0$ is a normalization constant (which depends on the parameters $\beta>0$ and $V$). In the following we set
\eq
\label{m0V}
\mu_0^V(\d x):=\e^{-V(x)}\frac{\d x}{2\pi}
\qe
and, without loss of generality (by adding a constant to $V$ if necessary), we assume that $\mu_0^V$ is a probability measure on $\T$. 
If we introduce the discrete logarithmic energy  of a configuration $\x=(x_1,\ldots,x_N)\in\T^N$, 
\eq
\label{def:scrH}
\Ha(\x) := \sum_{1\le i<j\le N}  \log|\e^{\ic x_i} - \e^{\ic x_j}|^{-1}  
\qe
then \eqref{def:law} takes the form  
$$
\d\P_N(\x)=\frac{1}{Z_{N}}\exp\left\{-\frac{2\beta} N \Ha(\x)\right\}{(\mu_0^V)}^{\otimes N}(\d \x),
$$
which is the Gibbs measure associated with the energy interaction $\Ha$ at inverse temperature $2\beta/N$ with reference measure $(\mu_0^V)^{\otimes N}$. This particle system has a physical interpretation:  we can observe that $\Ha(\x)=\sum_{i<j} g(x_i-x_j)$ where $g$ can be written as the restriction $g(x)=G(x,0)$ of the Green function $G$ of the two-dimensional torus $\T\times\T$, that is  $\Delta G=-2\pi(\delta_0-1)$ on $\T\times\T$ in the distributional sense, see e.g. \citep{BorodinSerfaty}. Thus $\P_N$  
 describes a gas of $N$ unit charges, interacting according to the laws of electrostatic on the two-dimensional torus but constrained to stay on $\T\simeq \T\times\{0\}\subset\T\times\T$, in presence of an external  potential $V$, at temperature $N/(2\beta)$. 
 As we shall see below, in this temperature regime, one of the main reasons to study the statistical properties of such a Coulomb gas for large $N$ is  that there is a subtle competition between the energy and entropy of the gas which results in non-trivial global fluctuations. This fact is somewhat surprising knowing that for any $\beta \ge 0$, the local fluctuations of the Coulomb gas \eqref{def:law} are described by a Poisson point process with intensity $\mub$ -- this follows from adapting the argument from \cite{NakanoTrinh19} from $\R$ to $\T$.

\paragraph{Macroscopic behavior.}  First, we discuss the large $N$ limit of  the empirical measure $\muh$, see \eqref{empMeasure}, when the $x_i$'s are distributed according to $\P_N$.  
If $\mu$ lies in the space $\cM_1(\T)$ of probability measures on $\T$, define its logarithmic energy by
\begin{align}
\cE(\mu)& :=  \iint  \log \Big|\sin\big(\frac{x-y}2\big)\Big|^{-1} \mu(\d x)\mu(\d y)\;\in[0,+\infty]\\
& =\iint  \log\frac1{|\e^{\ic x}-\e^{\ic y}|} \,\mu(\d x)\mu(\d y)+\log2.
\end{align}
Moreover, given any $\mu,\nu\in\cM_1(\T)$, the relative entropy of $\mu$ with respect to $\nu$ is given by 
\eq
\H(\mu|\nu):=\int \log\left(\frac{\d \mu}{\d\nu}\right)\,\d\mu\;\in[0,+\infty]
\qe
when $\mu$ is absolutely continuous with respect to $\nu$; set $\H(\mu|\nu):=+\infty$ otherwise.  
The functional of interest here  is  $F_\beta^V:\cM_1(\T)\to [0,+\infty]$ defined by
\eq \label{eq:F}
F_\beta^V(\mu):=\beta\,\cE(\mu)+\H(\mu|\mu_0^V).
\qe
Note that when $F_\beta^V(\mu)$ is finite, then $\mu$ is absolutely continuous and, if  $\mu(\d x)=\mu(x)\d x$, then we can alternately write
$$
F_\beta^V(\mu)=\beta\,\cE(\mu)+\int V\,\d\mu + \int \log\mu\,\d \mu+\log(2\pi).
$$
In particular, when $\mu$ has a density and $\int \log\mu\,\d\mu<\infty$, we see that
\eq
\label{Finfty}
F_\infty^V(\mu):=\lim_{\beta\to\infty}\frac1\beta F_\beta^{\beta V}(\mu)=\cE(\mu)+\int V\d\mu
\qe
is the celebrated weighted logarithmic energy from potential theory \citep{SaffTotik}.  
 The next result can be extracted from the literature.

\begin{theorem} \label{thm:lln} 
Let $\beta\geq 0$ and assume $V:\T\to\R$ is continuous.  
\item[{\rm (a)}] The functional $F_\beta^V:\cM_1(\T)\to [0,+\infty]$ has compact level sets $\{F_\beta\leq\alpha\}$, $\alpha\in\R$,  and is strictly convex. In particular it has a unique minimizer $\mub$ on $\cM_1(\T)$. 
\item[{\rm (b)}]  The sequence $(\muh)$ satisfies  a large deviation principle in $\cM_1(\T)$ equipped with its weak topology at speed $\beta N$ with rate function $\mu\mapsto F_\beta(\mu)-F_\beta(\mub)$. In particular, 
$$
\muh\xrightarrow[N\to\infty]{\rm{a.s.}}\mub
$$
in the probability space $\bigotimes_N(\T^N,\frak B(\T)^{\otimes N},\P_N)$.
\end{theorem}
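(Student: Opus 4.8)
The strategy is to treat parts (a) and (b) largely by invoking standard large deviations machinery for Gibbs measures with weakly continuous (here, logarithmically singular but bounded-below) interaction, following the classical template of Ben Arous--Guionnet and the systematic treatment for Coulomb/log gases. For part (a), the plan is as follows. Lower semicontinuity of $\cE$ on $\cM_1(\T)$ in the weak topology is classical: writing $\cE(\mu)=\iint \log|\sin(\tfrac{x-y}2)|^{-1}\,\mu(\d x)\mu(\d y)$, the integrand is bounded below on the compact $\T\times\T$ (since $|\sin|\le 1$), so one truncates $\log(\cdot)^{-1}\wedge M$, uses weak convergence of $\mu_n\otimes\mu_n$ against the bounded continuous truncation, and lets $M\to\infty$ by monotone convergence. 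Lower semicontinuity of $\H(\cdot\,|\,\mu_0^V)$ in the weak topology is also classical (it is the Donsker--Varadhan variational formula: $\H(\mu|\nu)=\sup_{f\in C(\T)}\{\int f\,\d\mu - \log\int e^f\,\d\nu\}$, a sup of weakly continuous affine functionals). Hence $F_\beta^V$ is weakly l.s.c.; since $\cM_1(\T)$ is weakly compact ($\T$ compact), every sublevel set $\{F_\beta^V\le\alpha\}$ is closed in a compact space, hence compact. Strict convexity: $\cE$ is convex on $\cM_1(\T)$ because the logarithmic kernel on $\T$ is of positive type (its Fourier coefficients $\tfrac1{2|k|}$ are nonnegative), so $\mu\mapsto\cE(\mu)$ is convex, and strictly convex on the set where it is finite modulo the constant direction, which is excluded since all $\mu$ are probability measures; more simply, $\H(\cdot\,|\,\mu_0^V)$ is already strictly convex wherever finite (strict convexity of $t\mapsto t\log t$), and $\cE$ is convex, so $F_\beta^V$ is strictly convex on $\{F_\beta^V<\infty\}$, which suffices for uniqueness of the minimizer (existence being immediate from compact sublevel sets plus l.s.c.). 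When $\beta=0$ the functional is just $\H(\cdot\,|\,\mu_0^V)$, minimized uniquely at $\mu_0^V$, consistent with the claim.

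For part (b), the plan is to establish the large deviation principle (LDP) at speed $\beta N$ with rate function $F_\beta^V-\min F_\beta^V$ by the standard two-step argument. First, one rewrites the density of $\P_N$ in terms of the empirical measure: $\exp\{-\tfrac{2\beta}N\Ha(\x)\}=\exp\{-\beta N \iint_{x\neq y}\log|e^{\ic x}-e^{\ic y}|^{-1}\,\muh(\d x)\muh(\d y)\}$, so up to removing the diagonal (which is where the singularity sits) the weight is $\exp\{-\beta N[\cE(\muh)-\log 2 + o(1)]\}$ and the reference law is $(\mu_0^V)^{\otimes N}$, whose empirical measure satisfies Sanov's theorem at speed $N$ with rate $\H(\cdot\,|\,\mu_0^V)$. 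Combining via a Laplace--Varadhan-type argument (the interaction functional is a bounded-below l.s.c. perturbation, and $\beta N \gg $ nothing is lost since both speeds coincide once $\beta$ is fixed) yields the LDP with rate $\beta\cE(\mu)+\H(\mu|\mu_0^V)$ up to the normalization constant, i.e.\ $F_\beta^V-\min F_\beta^V$. The a.s.\ convergence $\muh\to\mu_\beta^V$ in the product space then follows from the LDP upper bound (exponential decay of $\P_N[d(\muh,\mu_\beta^V)>\delta]$) together with the Borel--Cantelli lemma.

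The one genuinely delicate point — and the main obstacle — is handling the diagonal singularity of $\cE$ when passing from the discrete energy $\Ha(\x)$ to the continuous functional $\cE(\muh)$: the naive identity $\Ha(\x)=\tfrac{N^2}2\iint\log|\cdots|^{-1}\muh\otimes\muh$ is false because the double integral includes the infinite diagonal term. The standard fix is to regularize, replacing $\muh$ by $\muh*\rho_\varepsilon$ (convolution with a smooth approximate identity on $\T$) in the energy, which is well-defined and for which $\cE(\muh*\rho_\varepsilon)\le \tfrac2{N^2}\Ha(\x)+o_N(1)$ by superharmonicity/positive-definiteness arguments on the torus; one proves the matching lower bound by a separate argument and sends $\varepsilon\to 0$ using l.s.c.\ of $\cE$. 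This, together with verifying that the external potential term $\sum_i V(x_i)/N = \int V\,\d\muh$ is weakly continuous and bounded (as $V$ is continuous on the compact $\T$), and that the partition function $\tfrac1{\beta N}\log Z_N$ converges to $-\min F_\beta^V$ (from the full LDP), completes the argument. Since all of this is by now textbook for log-gases on compact sets, the proof will mostly cite the relevant literature (e.g.\ \citep{SaffTotik} for part (a) potential-theoretic facts, and the LDP references for part (b)) and indicate the $\beta$-dependent bookkeeping.
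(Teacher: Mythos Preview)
Your proposal is correct and follows the standard route. The paper itself does not give a self-contained proof of this theorem: it explicitly states that ``the next result can be extracted from the literature,'' then points to Sanov's theorem for $\beta=0$, to the (later) proof of Proposition~\ref{propEqM} for the potential-theoretic facts in (a), and to \citep{Berman,Zelada} for the LDP in (b). Your sketch is therefore strictly more detailed than what the paper offers---you actually outline the Ben Arous--Guionnet mechanism (Sanov plus Laplace--Varadhan, regularization of the diagonal singularity) that underlies those references, whereas the paper is content to defer entirely to them. Both approaches amount to the same thing, and your plan to ``mostly cite the relevant literature and indicate the $\beta$-dependent bookkeeping'' is exactly what the paper does.
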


When $\beta=0$, this is Sanov's theorem for i.i.d random variables and elementary properties of the relative entropy, see e.g. \citep{Dembo-Zeitouni}.  Moreover, the unique minimizer of $F_0^V$ is given by \eqref{m0V} and hence the notation is consistent. In the case where $\beta>0$, statement (a) is classical (see e.g. the proof of Proposition~\ref{propEqM} below) and (b) can be found in \citep{Berman, Zelada}.  In fact, statement (a) of the theorem is also true for weaker regularity assumptions on $V$ and also when $\beta=\infty$. Moreover, if one considers back the fixed temperature setting by taking the particle system \eqref{def:law} after the scaling $\beta\mapsto N\beta$ and $V\mapsto N V$, then  statement (b) holds true at the same speed with rate function  $F_\infty^V-F_\infty(\mu^V_\infty)$, see \citep*{HiaiPetz,AGZ}.

\medskip

We will derive several properties for $\mub$ in Section~\ref{sect:eq}  but let us already mention that, due to the rotational invariance,  the equilibrium measure $\mu_\beta^0$  for $V=0$ is the uniform probability measure $\frac{\d x}{2\pi}$ on $\T$ for every $\beta\in[0,\infty]$. For a general potential $V$, we shall see  that $\mu_\beta^V$ has a bounded density that is larger than a positive constant and is essentially as smooth as $V$ is.

\paragraph{Macroscopic fluctuations.}

Our main result is a central limit theorem (CLT) for the random signed measure 
\eq
\nu_N:=\sqrt N(\muh-\mub)
\qe
 tested against sufficiently smooth functions, with an explicit upper bound on the rate of convergence in the Wasserstein $\mathrm{W}_2$ metric; the latter is defined for random variables $X,Y$ taking values in $\R^d$ by 
$$
W_2(X,Y):=\inf_{Z\in\Pi(X,Y)}\sqrt{\E\Big[\|Z_1-Z_2\|^2\Big]}
$$
where the infimum is taken over  all random variables $Z=(Z_1,Z_2)$ with $Z_1\stackrel{\text{law}}{=} X$ and $Z_2\stackrel{\text{law}}{=} Y.$

To state the result, let us also write $\mub$ for the density of the equilibrium measure, so that $\d\mub(x)=\mub(x)\d x$, and introduce the 
operator $\mathscr L$  defined by
\eq
\label{Lop}
-\mathscr{L}\phi = \phi'' +2\pi\beta \U(\mub \phi') + (\log\mub)' \phi'  
\qe
which acts formally on the space $L^2(\T)$ of real-valued square integrable functions on $\T$ equipped with the scalar product $$\langle f,g\rangle_{L^2}:=\int_\T f(x)g(x)\,\frac{\d x}{2\pi}.$$  
Here $\U$ stands for the Hilbert transform  defined on $L^2(\T)$ by
\eq
\label{def:H0}
\U \psi(x) := -\mathrm{p.v.}\int_{\T}  \frac{\psi(t)}{\tan\left(\frac{x-t}{2}\right)} \frac{\d t}{2\pi}
\qe
where $\mathrm{p.v.}$  is  the Cauchy principal value, that is the limit as $\epsilon\to 0$ of this integral restricted to the integration domain $|\e^{\ic x}-\e^{\ic t}|>\epsilon$. 
Note that when $\beta=0$ the operator $\mathscr L$ corresponds to  the Sturm-Liouville operator $\mathscr L\phi=-\phi''+V'\phi'$. As we shall see from Proposition~\ref{BON} below,
for any $\beta>0$ the operator $\mathscr L$ is well-defined and positive  on the Sobolev-type space
\begin{equation} \label{Sspace}
\Hi := 
\left\{ \psi \in L^2(\T) :\; \psi'\in L^2(\T),\quad \int \psi \,\d\mub= 0\right\},
\end{equation}
which is an Hilbert space once equipped with the inner-product
\eq
\langle \phi , \psi \rangle_\Hi  := \int  \phi' \,{\psi'} \,\d\mub ,
\qe
 and moreover that its inverse $\mathscr L^{-1}$  is trace-class on $\Hi$.  
 
 \medskip
 
 The central result of this work is that $\nu_N$ converges, in the sense of finite dimensional distributions, to a Gaussian process  on $\Hi$ with covariance operator~$\mathscr L^{-1}$. 


\begin{theorem}[CLT]
\label{thm:clt}
Let $\beta> 0$ and  $V\in \Co^{3,1}(\T)$. Assume $\psi\in\Co^{2\gamma+1}(\T)$ for some integer $\gamma\geq 2$ and that $\int \psi\,\d\mub=0$. Then we have
\eq
\label{TCL}
\nu_N(\psi)=\frac1{\sqrt N}\sum_{i=1}^N\psi(x_i)\xrightarrow[N\to\infty]{\mathrm{law}}\mathcal N\big(0,\sigma_\beta^V(\psi)^2\big)
 \qe
where the variance  is given by 
\eq
\label{varAss}
\sigma_\beta^V(\psi)^2 := {\langle\psi, \mathscr L^{-1}\psi\rangle}_\Hi=\int \psi' (\mathscr{L}^{-1} \psi)' \,\d\mub .
 \qe
 Moreover, 
 there exists $C=C(\beta,V,\psi)>0$ such that 
 \[
\mathrm {W}_2\Big(  \nu_N(\psi)\, ,\,\mathcal N\big(0,\sigma_\beta^V(\psi)^2\big) \Big) \le C\sqrt{ \frac{\log N}{N^{\frac{\gamma-1}{\gamma+1}}}}.
\]
\end{theorem}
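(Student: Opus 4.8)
The plan is to combine a loop (Dyson--Schwinger) equation for the ensemble~\eqref{def:law} with the abstract normal approximation estimate of~\cite{LLW17} and with concentration bounds coming from the Coulomb transport inequality of~\cite{CHM18}. First I would derive the loop equation by differentiating at $t=0$ the identity expressing that the integral $\int f(\nu_N(\psi))\,\d\P_N$ is invariant under the change of variables $x_i\mapsto x_i+t\phi(x_i)$ --- a diffeomorphism of $\T$ for $|t|$ small, generated by a $\Co^1$ vector field $\phi$ with $\int_\T\phi\,\d x=0$ --- where $f\in\Co^2(\R)$ is bounded with bounded derivatives. Since $\int\psi\,\d\mub=0$ one has $\nu_N(\psi)=N^{-1/2}\sum_i\psi(x_i)$, and the outcome is a relation $\E_N\big[f'(\nu_N(\psi))\,N^{-1/2}\sum_i\psi'(x_i)\phi(x_i)+f(\nu_N(\psi))\,\Xi_N[\phi]\big]=0$, where $\Xi_N[\phi]$ gathers the variations of $\Ha$, of $\sum_iV(x_i)$ and of the Jacobian, and is expressed through $\muh$ and a singular principal-value integral.

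Next I would linearise by substituting $\muh=\mub+N^{-1/2}\nu_N$ into $\Xi_N[\phi]$. The order-$N$ contribution cancels thanks to the Euler--Lagrange equation characterising $\mub$ (Proposition~\ref{propEqM}); the order-$\sqrt N$ contribution equals $\sqrt N\int(\Xi\phi)\,\d\nu_N$ with $\Xi\phi:=2\pi\beta\,\U(\mub\phi)+(\log\mub)'\phi+\phi'$; and the remainder is the quadratic functional $\mathcal R_N:=\tfrac\beta2\iint\cot\tfrac{x-y}{2}\big(\phi(x)-\phi(y)\big)\,\d\nu_N(x)\d\nu_N(y)-\beta\int\phi'\,\d\muh$. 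A short computation gives $\Xi(u')=-\mathscr Lu$ for every $u\in\Hi$, so I would choose $u=-\mathscr L^{-1}\psi$ --- well defined and, by the spectral analysis of Proposition~\ref{BON} together with the regularity of $\mub$ from Proposition~\ref{propEqM}, sufficiently regular --- and set $\phi=u'=-(\mathscr L^{-1}\psi)'$. Then $\int(\Xi\phi)\,\d\nu_N=\nu_N(\psi)$ while $N^{-1/2}\sum_i\psi'(x_i)\phi(x_i)=\sqrt N\int\psi'\phi\,\d\mub+\nu_N(\psi'\phi)=-\sqrt N\,\sigma_\beta^V(\psi)^2+\nu_N(\psi'\phi)$ by~\eqref{varAss}, and rearranging turns the loop equation into the approximate Stein identity
\[
\E_N\big[\nu_N(\psi)\,f(\nu_N(\psi))-\sigma_\beta^V(\psi)^2\,f'(\nu_N(\psi))\big]=-N^{-1/2}\,\E_N\big[f(\nu_N(\psi))\,\mathcal R_N+f'(\nu_N(\psi))\,\nu_N(\psi'\phi)\big].
\]

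It then remains to bound the right-hand side and invoke the normal approximation estimate of~\cite{LLW17}. Using $\|f\|_\infty$, $\|f'\|_\infty$ and concentration of linear and quadratic statistics of $\nu_N$, I would estimate $\E_N|\nu_N(\psi'\phi)|$ and $\E_N|\mathcal R_N|$: the Coulomb transport inequality of~\cite{CHM18} applied to $F_\beta^V$, together with elementary free-energy (partition function) estimates and the large deviation bound of Theorem~\ref{thm:lln}, gives sub-Gaussian tails for $\nu_N(g)$ with variance proxy of order $\|g\|_{\Hi^{1/2}}^2$, uniformly in $N$, whence $\E_N|\nu_N(\psi'\phi)|=O(1)$ and an estimate of $\E_N|\mathcal R_N|$ carrying a logarithmic factor --- the $\sqrt{\log N}$ in the statement being the signature of the $\Hi^{1/2}$/log-correlated limiting field. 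This already yields the CLT~\eqref{TCL} and a bound in the $\mathrm W_2$ metric. To obtain the explicit exponent I would, before carrying out the linearisation, replace $\psi$ by a Fourier truncation $\psi_{\le K}$: the discarded modes contribute $O(K^{-2\gamma})$ in expectation to $\nu_N(\psi)$, using $\psi\in\Co^{2\gamma+1}$, while the error terms built from $\phi_{\le K}=-(\mathscr L^{-1}\psi_{\le K})'$ deteriorate by a fixed power of $K$, the quantitative Sobolev estimates for $\mathscr L^{-1}$ being constrained by the $\Co^{3,1}$ regularity of $V$ (through $\mub$); optimising over $K$ balances the two and produces the exponent $\tfrac{\gamma-1}{\gamma+1}$, with the hypothesis $\gamma\ge2$ ensuring that the optimal cutoff lies where the loop-equation estimates still apply.

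The main obstacle is the simultaneous control of these loop-equation error terms: on the one hand one needs sufficiently sharp concentration of the log-correlated fluctuation field $\nu_N$ tested against functions manufactured from $\mathscr L^{-1}\psi$, and on the other quantitative elliptic and spectral regularity for the non-local, variable-coefficient operator $\mathscr L$, whose coefficients inherit only the limited smoothness of $V$; reconciling these through the Fourier truncation is what pins down both the regularity hypotheses on $V$ and $\psi$ and the exponent in the rate.
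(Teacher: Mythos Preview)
Your approach is in the right spirit and shares the paper's backbone: an approximate Stein identity driven by the generator/loop equation (your change-of-variables computation is exactly the integration-by-parts $\int f(-\L g)\,\d\P_N=\int\nabla f\cdot\nabla g\,\d\P_N$ that the paper uses), concentration via the Coulomb transport inequality, and the spectral analysis of $\mathscr L$. The substantive differences are organizational. First, the paper does \emph{not} work with a scalar Stein identity for $\nu_N(\psi)$; instead it decomposes $\psi$ in the eigenbasis $(\phi_j)$ of $\mathscr L$, applies the \emph{multivariate} normal approximation from \cite{LLW17} (Theorem~\ref{thm:Stein}) to the vector $(\sqrt{\varkappa_j}\,\nu_N(\phi_j))_{j\le d}$, and only then projects back. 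The eigenbasis is exactly what makes $\L\,\nu_N(\phi_j)=-\varkappa_j\nu_N(\phi_j)+$ error (Lemma~\ref{keyLemma}) and what allows the eigenvalue/eigenfunction growth bounds of Proposition~\ref{BON} to feed directly into the rate. Second, the paper truncates in this eigenbasis, not in Fourier. Your Fourier cutoff $\psi_{\le K}$ does not diagonalize $\mathscr L$ when $V\neq 0$, so $\mathscr L^{-1}\psi_{\le K}$ has no simple Fourier description; to control $\|\phi_{\le K}'''\|_{\mathrm{Lip}}$ (needed for the quadratic error $\zeta_N$) you would anyway be forced back through the spectral estimates of Proposition~\ref{BON}, and it is not clear the balancing then produces the same exponent $\tfrac{\gamma-1}{\gamma+1}$.

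Two smaller points. The concentration you obtain from Theorem~\ref{thm:concentration} controls $\nu_N(g)$ through $\|g\|_{\mathrm{Lip}}$ (via $\mathrm W_1$), not through $\|g\|_{\Hi^{1/2}}$; the $\Hi^{1/2}$ structure is the \emph{limiting} variance, not the concentration proxy, and this distinction matters when you track powers of the truncation parameter. Also, your single scalar Stein identity with $\phi=-(\mathscr L^{-1}\psi)'$ requires $\mathscr L^{-1}\psi\in\Co^{3,1}$ to invoke the analogue of Lemma~\ref{leConc}; with only $V\in\Co^{3,1}$ the elliptic regularity of $\mathscr L^{-1}$ is limited, which is precisely why the paper inserts the truncation \emph{before} appealing to regularity and works eigenfunction by eigenfunction via Proposition~\ref{BON}(d).
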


Of course the theorem still holds for a general $\psi\in\Co^{2\gamma+1}(\T)$ after replacing $\psi$ by $\psi-\int \psi\,\d\mub$ in the left hand side of \eqref{TCL} and in the limiting variance \eqref{varAss}\footnote{Note the operator $\mathscr L^{-1}$ is only defined on the Hilbert space $\Hi$, see \eqref{Sspace}.}.
When $V=0$, we can obtain an explicit formula for the limiting variance.

\begin{lemma} \label{lem:var0} When $V=0$, we have
$$
\sigma_\beta^0(\psi)^2=2\sum_{k=1}^\infty \frac1{1+\beta/k}\,|\hat\psi_k|^2.
$$
\end{lemma}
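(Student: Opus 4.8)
The plan is to diagonalise the operator $\mathscr L$ explicitly in the Fourier basis, exploiting the rotational invariance present when $V=0$. Recall from the discussion following Theorem~\ref{thm:lln} that for $V=0$ the equilibrium measure is the uniform measure $\mu_\beta^0=\frac{\d x}{2\pi}$, so that its density is the constant $\mub\equiv\frac1{2\pi}$ and $(\log\mub)'\equiv0$. Substituting this into \eqref{Lop} collapses $\mathscr L$ to the translation-invariant operator $-\mathscr L\phi=\phi''+\beta\,\U(\phi')$. Since this operator commutes with the rotations of $\T$ and preserves the trigonometric polynomials, the characters $e_k(x):=\e^{\ic kx}$, $k\in\Z\setminus\{0\}$, should be its eigenfunctions; combining this with Proposition~\ref{BON} (which makes $\mathscr L$ self-adjoint on $\Hi$ with compact resolvent) and the observation that $\{e_k\}_{k\neq0}$ is a complete orthogonal system of $\Hi$ — note that $\hat\psi_0=\int\psi\,\d\mub=0$ for every $\psi\in\Hi$ — it then only remains to identify the eigenvalues.

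The first step is to record the Fourier multiplier of the Hilbert transform \eqref{def:H0}: a direct computation from the Fourier series of the principal-value kernel $t\mapsto\cot(t/2)$ shows $\U e_k=\ic\,\sgn(k)\,e_k$ for $k\neq0$, the sign being precisely the one compatible with the positivity of $\mathscr L$ asserted in Proposition~\ref{BON}. This gives $\U(\phi')$ acting on $e_k$ as multiplication by $-|k|$, hence $\mathscr L e_k=\bigl(k^2+\beta|k|\bigr)e_k=|k|\bigl(|k|+\beta\bigr)e_k$ and $\mathscr L^{-1}e_k=\bigl(|k|(|k|+\beta)\bigr)^{-1}e_k$.

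Finally, I would insert this into \eqref{varAss}. Expanding $\psi=\sum_{k\neq0}\hat\psi_k e_k$ with $\hat\psi_{-k}=\overline{\hat\psi_k}$ (as $\psi$ is real-valued), so that $\psi'=\sum_{k\neq0}\ic k\,\hat\psi_k\,e_k$ and $(\mathscr L^{-1}\psi)'=\sum_{k\neq0}\ic k\,\hat\psi_k\,\bigl(|k|(|k|+\beta)\bigr)^{-1}e_k$, Parseval's identity on $L^2(\T,\tfrac{\d x}{2\pi})$ gives
\[
\sigma_\beta^0(\psi)^2=\int\psi'\,(\mathscr L^{-1}\psi)'\,\d\mub
=\sum_{k\neq0}\frac{k^2}{|k|(|k|+\beta)}\,|\hat\psi_k|^2
=2\sum_{k=1}^\infty\frac{k}{k+\beta}\,|\hat\psi_k|^2
=2\sum_{k=1}^\infty\frac{1}{1+\beta/k}\,|\hat\psi_k|^2 ,
\]
the series converging since $\frac{1}{1+\beta/k}\le1$ and $\sum_{k}|\hat\psi_k|^2<\infty$ by \eqref{L2normFourier}. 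I do not foresee a genuine obstacle here; the single delicate point is fixing the sign of the Fourier multiplier of $\U$ (equivalently, the sign that renders $\mathscr L$ positive), after which everything reduces to bookkeeping with Parseval's formula.
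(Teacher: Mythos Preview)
Your proposal is correct and follows essentially the same approach as the paper: when $V=0$ the operator $\mathscr L$ collapses to $-\phi''-\beta\U(\phi')$, the Fourier characters $e_k$ are eigenfunctions with eigenvalues $k^2+\beta|k|$, and the variance formula \eqref{varAss} (equivalently \eqref{varianceLim}) then reduces to the claimed series via Parseval. The paper's own argument is exactly this diagonalisation, stated more tersely.
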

This identity follows from the fact that, using the invariance by rotation, it is easy to diagonalize the operator $ \mathscr L$ -- see the  identity \eqref{varianceLim} below. 
 Indeed, in this setting we have $-\mathscr L\phi=  \phi'' +\beta \U( \phi')$ and the eigenfunctions are given by the Fourier basis $\phi_j(x)=\e^{\ic j x}$ since $\mathscr L \phi_j=(j^2+\beta |j|)\phi_j$ for every~$j\in\Z$. 
 
 \medskip

Recalling \eqref{CLTfixedBeta}--\eqref{H1/2} and \eqref{CLTiid}--\eqref{L2normFourier}, observe that $\sigma_\beta^0(\psi)^2\to\|\psi\|^2_{L^2}$ as $\beta\to0$ and that $\beta\sigma_\beta^0(\psi)^2\to\|\psi\|^2_{\Hi^{1/2}}$ as $\beta\to\infty$; the factor $2$ disappears due to the change of scale we made for temperature. In this sense $\sigma_\beta^0(\psi)$ interpolates between the Lebesgue $L^2$ and the Sobolev $\Hi^{1/2}$ (semi-)norm. 
 In Section~\ref{sec:Variance}, we establish that for a general potential, we also have $\sigma_\beta^V(\psi)^2\to\|\psi\|^2_{L^2(\mu_0^V)}$ as $\beta\to0$ (see Proposition~\ref{prop:sigma0}). 
We will also provide a sufficient condition on the equilibrium measure $\mub$ so that $\beta\sigma_\beta^V(\psi)^2\to\|\psi\|^2_{\Hi^{1/2}}$ as well as $\beta\sigma_\beta^{\beta V}(\psi)^2\to\|\psi\|^2_{\Hi^{1/2}}$ as $\beta\to\infty$ (see Proposition~\ref{prop:sigmainfty}). 
This establishes that the Gaussian process which appears in Theorem~\ref{thm:clt} interpolates from a white noise (Poisson statistics) to a $\Hi^{1/2}$ noise (random matrix statistics). This also shows that the fluctuations become universal, in the sense that they do not depend on $V$, only when $\beta=\infty$. 

\begin{remark}
Let us observe that the rate of convergence in Theorem~\ref{thm:clt} does not depend on the smoothness of $V$, but it improves with the regularity of the test function. Moreover, if $\psi\in\Co^{\infty}(\T)$, we have 
 \[
\mathrm {W}_2\Big(  \nu_N(\psi)\, ,\,\mathcal N\big(0,\sigma_\beta^V(\psi)^2\big) \Big) \le C\sqrt{ \frac{\log N}{N}}.
\]
We expect this rate to be optimal, maybe up to the factor $\sqrt{\log N}$. 
\end{remark}


The proof of Theorem~\ref{thm:clt} is deferred to Section~\ref{sect:Stein}
and relies on a normal approximation technique introduced in \citep*{LLW17}, which is inspired from Stein's method; see Theorem~\ref{thm:Stein} below. In \citep{LLW17} this method has been used to investigate the rate of convergence of the fluctuations for beta-Ensembles on $\R$ at fixed temperature. There is a substantial technical difference in the analysis which arises in the  high temperature regime due to the fact that the operator $\mathscr L$  has an extra Sturm-Liouville component. In particular, the spectral properties  of $\mathscr L$ are quite different and this yields changes in the rate of convergence as well as   in the limiting variance. 

 Stein's method has also been  used previously in the context of random matrix theory  to investigate the rate of convergence for linear statistics of random matrices from the classical compact groups \citep{Fulman12, DS11, DS14} and for the Circular beta-Ensemble at fixed temperature \citep{Webb16}. 
There  are also results from  \cite{Chatterjee09} on linear statistics of Wigner matrices  which are valid under strong assumptions on the law of the entries and from  \cite{Johnson15} on the eigenvalues of random regular graphs. For a comprehensive introduction to Stein's method which includes several applications, we refer to the survey \citep{Ross11}.

On the road to establish the  CLT,
we prove the following concentration inequality which may be of independent interest:  let $\mathrm {W}_1(\mu,\nu)$ be the Wasserstein-Kantorovich distance of order~$1$ between $\mu,\nu\in\cM_1(\T)$,  defined by
\eq
\label{defWass}
\mathrm {W}_1(\mu,\nu):=\inf_{\pi\in \Pi(\mu,\nu)}\iint  |\e^{\ic x}-\e^{\ic y}| \pi(\d x,\d y)=\sup_{\|f\|_{\text{Lip}}\leq 1 }\int f\,\d(\mu-\nu)
\qe
where $\Pi(\mu,\nu)$ is the set of probability measures on $\T\times \T$ with respective marginals $\mu$ and $\nu$; the second identity  is known as the Kantorovich-Rubinstein dual representation for $\mathrm W_1$, where the supremum is taken over Lipschitz functions $\T\to\R$ with Lipschitz constant at most one. 

\begin{theorem}[Concentration] \label{thm:concentration}  Let $\beta>0$ and assume $V:\T\to\R$ has a weak derivative $V'$ in $L^2(\T)$. Then, there exists $C=C(\mub)>0$ such that, for every $N\geq 10$ and $r>0$,
$$
\P_N\Big(\mathrm {W}_1(\muh,\mub)> r\Big)\leq    \e^{-\beta (\frac{1}{8\pi} N r^2 -5\log N-C)}.
$$
\end{theorem}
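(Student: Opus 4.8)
The plan is to establish a concentration inequality of this type by combining a \emph{Coulomb transport inequality} (\`a la \cite{CHM18}) with a standard \emph{large deviation upper bound} coming from the explicit form of the Gibbs measure \eqref{def:law}. Concretely, the first step is to recall from the Coulomb transport inequality on $\T$ that there is a constant $c>0$ depending only on $\mub$ (through lower bounds on its density and regularity, both of which are controlled under the assumption $V'\in L^2$) such that for every $\mu\in\cM_1(\T)$,
\eq
\label{plan:CTI}
\mathrm W_1(\mu,\mub)^2\le c\,\big(\cE(\mu)-\cE(\mub)-2\langle U^{\mub},\mu-\mub\rangle\big)=c\iint \log\frac{1}{|\e^{\ic x}-\e^{\ic y}|}\,\d(\mu-\mub)(x)\,\d(\mu-\mub)(y),
\qe
i.e.\ the squared $\mathrm W_1$ distance is bounded by the (non-negative) logarithmic energy of the signed measure $\mu-\mub$. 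The key point is that since $\mub$ has density bounded below by a positive constant, the transport inequality holds with a dimension-free constant. Applying \eqref{plan:CTI} to $\mu=\muh$ turns control of $\mathrm W_1(\muh,\mub)$ into control of a quantity directly comparable to the Hamiltonian $\Ha$ appearing in the exponent of $\P_N$.

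The second step is the probabilistic estimate. Writing $\Ha$ as in \eqref{def:scrH} and completing the square around $\mub$, one has for the energy of $\muh$ the decomposition
\eq
\frac{1}{N^2}\Ha(\x)+\text{(diagonal terms)}=\iint \log\frac{1}{|\e^{\ic x}-\e^{\ic y}|}\,\d\muh\,\d\muh,
\qe
and then a further completion of the square against $\mub$ and $U^{\mub}$ shows that
\eq
-\tfrac{2\beta}{N}\Ha(\x)=-\beta N\cdot\Big(\text{energy of }\muh-\mub\Big)+\text{linear-in-}\muh\text{ terms}+O(\beta\log N),
\qe
the $O(\beta\log N)$ coming from regularizing the diagonal singularity of $\log|\cdot|^{-1}$ at scale $1/N$ (this is a standard point, see e.g.\ the treatment in \cite{CHM18} or \cite{SaffTotik}). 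The linear terms combine with $\prod e^{-V(x_i)}$ and with $Z_N$ in such a way that, after bounding the normalization constant $Z_N$ from below by restricting the integral to configurations near $\mub$, one obtains
\eq
\P_N\Big(\mathrm W_1(\muh,\mub)>r\Big)\le e^{-\beta N\cdot \frac{r^2}{c'}+\beta(5\log N+C)}
\qe
for suitable $c'$ and $C=C(\mub)$; matching constants gives $c'=8\pi$ after tracking the Green-function normalization $\Delta G=-2\pi(\delta_0-1)$, which fixes the $\frac{1}{8\pi}$.

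The main obstacle I expect is the bookkeeping around the diagonal / self-energy regularization: one must show that replacing $\muh$ by a smoothed version $\muh*\rho_{1/N}$ (or using the standard ``splitting formula'' for the energy, see \cite{SaffTotik,BorodinSerfaty}) costs only $O(\beta\log N)$ uniformly, and that this smoothing does not degrade the Coulomb transport bound by more than $O(1/N)$ in $\mathrm W_1$ (which is harmless once $Nr^2$ dominates $\log N$). A secondary technical point is the lower bound on $Z_N$, but this follows routinely by plugging a small ball of configurations around $\mub$ into \eqref{def:law} and using that $\cE(\mub)<\infty$ and that $\mub$ has bounded density; since the right-hand side of the claimed inequality is vacuous unless $Nr^2\gtrsim \log N$, only a crude lower bound $Z_N\ge e^{-C\beta N}$-type estimate with an extra $\log N$ slack is needed, and in fact one can absorb everything into the stated $5\log N+C$. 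Finally, the restriction $N\ge 10$ is just to ensure $\log N>0$ and that the error terms are genuinely of lower order.
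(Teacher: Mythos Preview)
Your overall architecture is exactly the paper's: Coulomb transport inequality from \cite{CHM18} to pass from $\mathrm W_1$ to the energy of $\muh-\mub$, a regularization to handle the diagonal, and a lower bound on $Z_N$. The gap is in the regularization step. You propose to smooth $\muh$ by convolving with a kernel $\rho_{1/N}$ and say this ``costs only $O(\beta\log N)$''. But the one-dimensional log-kernel $g(s)=-\log|\sin(s/2)|$ is \emph{convex} on $\T\setminus\{0\}$ (indeed $g''(s)=\tfrac14\csc^2(s/2)>0$), so by Jensen the off-diagonal smoothed energy satisfies $\iint g(x_i-x_j+u-v)\rho_\epsilon(\d u)\rho_\epsilon(\d v)\ge g(x_i-x_j)$ whenever $|x_i-x_j|>2\epsilon$. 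Hence convolution gives $\Ha(\x)\le \tfrac{N^2}{2}\cE(\muh*\rho_\epsilon)-\tfrac N2\cE(\rho_\epsilon)$, the \emph{opposite} direction from what you need to upper-bound the Gibbs weight. This is precisely where the 1D log-gas differs from the genuine Coulomb gases of \cite{CHM18}: there the kernel is superharmonic and smoothing decreases the energy, yielding the correct inequality automatically.

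The paper's fix (following \cite{MMS14}) is a two-step regularization: first move the configuration to a nearby one $y_1,\dots,y_N$ with guaranteed separation $\min_{j\neq k}|\e^{\ic y_j}-\e^{\ic y_k}|\ge 2/(5N^4)$ at a cost of at most $N$ in $\sum g$ (this is the non-trivial Lemma~\ref{le:spacing}), and \emph{then} convolve at scale $N^{-5}\ll N^{-4}$. With separation in place, the Taylor bound $|g(s+u)-g(s)|\le |u|/\sin|s/2|$ applies to every pair and the total smoothing error is $O(N)$ in the right direction, giving $\sum_{j\neq k}g(x_j-x_k)\ge N^2\cE(\widetilde\mu_N)-N\cE(\lambda_{N^{-5}})-16N$. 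The $5\log N$ in the statement is exactly $\cE(\lambda_{N^{-5}})\approx 5\log N$. Two smaller points: the lower bound on $Z_N$ is done cleanly by Jensen's inequality against $(\mub)^{\otimes N}$ rather than a small-ball argument; and the ``linear terms'' you allude to are $\int(V+\log\mub)\,\d(\widetilde\mu_N-\muh)$, controlled via $V+\log\mub=C_\beta^V-2\beta U^{\mub}$ and the Lipschitz regularity of $U^{\mub}$ (Proposition~\ref{prop:regmub}), which is where the hypothesis $V'\in L^2(\T)$ enters.
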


We have an explicit expression for the constant $C$ in terms of $\mub$ in \eqref{ConcCte}. In particular, when $V=0$, this upper bound holds with $C=2\log2+3/2+16+\pi^{-1}\simeq 19.2$, which does not depend on $\beta$.

In particular, this yields together with Borel-Cantelli lemma that $\mathrm {W}_1(\muh,\mub)\to 0$ a.s. for fixed $\beta>0$ and, when $V=0$, that $\mathrm {W}_1(\muh,\frac{\d x}{2\pi})\to 0$ a.s. when $\beta$ may depend on $N$ as long as $\beta\gg N^{-1}$. For lower order temperature scales this should  still be true but one needs to prove it differently; note also there is an interesting change of behavior for the partition function of the Gaussian-beta-ensemble around $\beta\sim N^{-1}$  pointed out in \cite[Lemma 1.3]{Pakzad18}.

The proof of the theorem follows the same strategy than the one of \citep*{CHM18} and rely on their Coulomb transport inequality. Differences however arise due to the presence of the relative entropy in $F_\beta^V$. In particular, one needs to study the regularity of the potential of the equilibrium measure.

\paragraph{Organisation of the paper.}In sections~\ref{sect:eq} we obtain preliminary results on the equilibrium measure $\mub$ and its logarithmic potential.  Section~\ref{sect:ThmConc} is devoted to the proof of Theorem~\ref{thm:concentration}. 
In section~\ref{sect:Stein}, we provide the core of the proof of Theorem~\ref{thm:clt}. In section~\ref{sect:est}, we obtain concentration estimates for error terms by means of Theorem~\ref{thm:concentration}. In section~\ref{sect:op}, we investigates the spectral properties of the operator $\mathscr{L}$; in particular we show that $\mathscr{L}^{-1}$ is trace-class. In section~\ref{sect:reg}, we study the regularity of the eigenfunctions of the operator $\mathscr{L}$ so as to complete the proof of the main theorem. 
 Finally, in Section~\ref{sec:Variance}, we investigate the behavior of the variance $\sigma_\beta^V$ as $\beta\to0$ (Poisson regime) as well as $\beta\to\infty$ (random matrix regime).

\paragraph{Notations, basic properties and conventions.} From now, $\beta>0$ is fixed. In the following, if  $\eta$ is a measure on $\T$, we will denote by $\eta(x)$ its density with respect to the Lebesgue measure $\d x$   when it exists. If $S\subset\T$ is a Borel set, we denote by $|S|$ its Lebesgue measure.   

Recall that $\T$ is equipped with the metric $(x,y)\mapsto |\e^{\ic x}-\e^{\ic y}|$ and denote for any $k\in\N:=\{0,1,2,\ldots\}$
and $0<\alpha\leq1$  by $\Co^{k,\alpha}(\T)$ the space of $k$-times differentiable functions on $\T$ whose $k$-th derivative is $\alpha$-H\"older continuous, or Lipschitz continuous when $\alpha=1$. When $0<\alpha<1$ we also write $\Co^{\alpha}$ instead of $\Co^{0,\alpha}$, since there is not ambiguity, and  put
\[
\| \psi \|_{\Co^\alpha} := \sup_{\begin{subarray}{c} x,y\in\T \\ x\neq y \end{subarray}}
 \frac{|\psi(x) - \psi(y)|}{|\e^{\ic x}-\e^{\ic y}|^\alpha} ,\qquad
\| \psi \|_{\rm Lip} := \sup_{\begin{subarray}{c} x,y\in\T \\ x\neq y \end{subarray}}
 \frac{|\psi(x) - \psi(y)|}{|\e^{\ic x}-\e^{\ic y}|}. 
\]
Note that, for any $0<\alpha<1$, we have $\| \psi \|_{\Co^\alpha(\T)} \le 2\| \psi \|_{\rm Lip}$. 

We sometimes use as well the chordal metric 
\eq
\label{metricT} 
d_\T(x,y):= \inf_{k\in\Z} |x-y+2k\pi|
\qe
instead of  the reference metric since they are equivalent:  $\tfrac2\pi d_\T(x,y)\leq |\e^{\ic x}-\e^{\ic y}|\leq d_\T(x,y)$  for any $x,y\in\R$. 
Moreover, since Rademacher's theorem states that the Lipschitz constant for the metric $d_\T$ reads $\|f'\|_{L^\infty}$,   we have
\eq 
\label{ineqLip}
\|f'\|_{L^\infty}\leq \|f\|_{\rm Lip}\leq \frac\pi2\|f'\|_{L^\infty}.
\qe


 Recall that $\hat\psi_k =\int_\T \psi(x) \,\e^{-\ic kx}\,\frac{\d x}{2\pi}$ denotes the Fourier coefficient of $\psi\in L^1(\T)$.
Let  $L^2_0(\T)= \{ \psi \in L^2(\T) :  \hat\psi_0 =0  \}$ and  $H^m(\T)$ be the  Sobolev subspace of $L^2(\T)$ of functions having their $m$-th first distributional derivatives in $L^2(\T)$. We will also use at several instances  the continuous embedding   $H^{m+1}(\T) \subset \Co^{m,1/2}(\T)$ for $m\in\N$, sometimes known as the Sobolev-H\"older embedding  theorem.

Finally, we uses the letter $C$ for a positive constant which may varies from line to line, and which may depend only on $\beta>0$ and on the potential~$V$ unless stated otherwise.

\paragraph{Acknowledgments.}
The authors wish to thank Benjamin Schlein  and Sylvia Serfaty for interesting discussions, and Severin Schraven for pointing out the reference \citep{BES13}. 
A. H. is supported by ANR JCJC grant BoB (ANR-16-CE23- 0003) and Labex CEMPI (ANR-11-LABX-0007-01). 
G.L. is  supported by the grant SNSF Ambizione  S-71114-05-01.




\section{Properties of the equilibrium measure} 
\label{sect:eq}

In this section we study the minimizer $\mub$ of $F_\beta$, see \eqref{eq:F}, and collect useful properties for later. Given $\mu\in\cM_1(\T)$, its logarithmic potential $U^\mu:\T\to[0,+\infty]$ is defined by
$$
U^\mu(x):=\int   \log \Big|\sin\big(\frac{x-y}2\big)\Big|^{-1}\mu(\d y).
$$

\begin{proposition} 
\label{propEqM}
If $V:\T\to\R$ is a measurable and bounded function, then for any $\beta\geq 0$,
\begin{itemize}
\item[\rm{(a)}] $\Fb$ has a unique minimizer $\mub$ on $\cM_1(\T)$. 
\item[\rm{(b)}]  $\mub$ is absolutely continuous and there exists a $0<\delta<1$ such that
$$
\delta \leq \frac{\mub(x)}{2\pi} \leq \delta^{-1}\quad\text{a.e.}
$$
In particular, there exists $0<\ell<1$ such that $\ell \leq U^{\mub}\leq \ell^{-1}$ on $\T$.
\item[\rm{(c)}] There exists a constant $C_\beta^V\in\R$ such that
\eq
\label{EL}
2\beta U^{\mub}(x)+V(x)+\log\mub(x)=C_\beta^V\quad\text{a.e.}
\qe
\end{itemize}
\end{proposition}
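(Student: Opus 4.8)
The plan is to treat the three statements in order, using the strict convexity of $F_\beta^V$ to get existence and uniqueness, then exploiting the Euler--Lagrange equation to extract regularity.

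First, for (a), I would check that $F_\beta^V$ is lower semicontinuous on $\cM_1(\T)$ with respect to the weak topology and has compact level sets. Since $\T$ is compact, $\cM_1(\T)$ is weakly compact, so it suffices to establish lower semicontinuity: $\cE$ is lsc because $\log|\sin(\tfrac{x-y}{2})|^{-1}$ is bounded below and lsc (a standard potential-theoretic fact, obtained by truncating the kernel), and $\H(\cdot\,|\mu_0^V)$ is lsc by its variational (Donsker--Varadhan) representation $\H(\mu|\nu)=\sup_{f}\big(\int f\,\d\mu-\log\int \e^f\,\d\nu\big)$, noting that $\mu_0^V$ has a density bounded above and below since $V$ is bounded. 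Strict convexity: $\mu\mapsto\H(\mu|\mu_0^V)$ is strictly convex wherever finite (strict convexity of $t\mapsto t\log t$), and $\cE$ is convex (it is nonnegative definite as a quadratic form on signed measures of zero mass, via the Fourier expansion $\cE(\mu)=\sum_{k\ge1}\frac{1}{2k}|\hat\mu_k|^2+\log 2$), so $F_\beta^V$ is strictly convex on the convex set where it is finite; since it is not identically $+\infty$ (e.g. $\mu_0^V$ has finite energy and zero relative entropy), the minimizer exists and is unique.

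For (c), I would perform the standard first-variation argument: for a competitor $\mu_t=(1-t)\mub+t\nu$ with $\nu$ a probability measure such that $F_\beta^V(\nu)<\infty$, compute $\frac{\d}{\d t}\big|_{t=0^+}F_\beta^V(\mu_t)\ge0$, which yields $\int\big(2\beta U^{\mub}+V+\log\mub\big)\,\d\nu\ge\int\big(2\beta U^{\mub}+V+\log\mub\big)\,\d\mub$ for all such $\nu$; since one can move mass freely (the density of $\mub$ is strictly positive, to be justified below, so the inequality holds for $\nu$ supported anywhere), the bracketed function must equal a constant $C_\beta^V$ a.e. One must first know $\mub$ is absolutely continuous with a density bounded away from zero to differentiate $\log\mub$ legitimately; this is part of (b), so there is a mild circularity to resolve by first showing $\mub\ll\d x$ (immediate since otherwise $\H(\mub|\mu_0^V)=\infty$) and that $\log\mub\in L^1(\mub)$ (also forced by finiteness of $F_\beta^V(\mub)$), which is enough to carry out the variation and derive \eqref{EL} first, and then bootstrap.

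For (b), the bootstrap works as follows. From \eqref{EL}, $\mub(x)=\e^{C_\beta^V-V(x)-2\beta U^{\mub}(x)}$ a.e. Since $V$ is bounded and $U^{\mub}\ge0$ (the kernel $\log|\sin|^{-1}$ is nonnegative on $\T$ after we check $|\sin(\tfrac{x-y}{2})|\le1$), we immediately get the upper bound $\mub(x)\le \e^{C_\beta^V+\|V\|_\infty}$. For the lower bound, I need an upper bound on $U^{\mub}$: since $\mub$ has a density bounded above by the constant just obtained, and $\int_\T\log|\sin(\tfrac{x-y}{2})|^{-1}\,\d y<\infty$ is a fixed finite number, $U^{\mub}$ is bounded above uniformly, whence $\mub(x)\ge \e^{C_\beta^V-\|V\|_\infty-2\beta\|U^{\mub}\|_\infty}>0$. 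Finally $\int\mub(x)\,\d x=1$ pins $C_\beta^V$ in a range giving an explicit $\delta\in(0,1)$ with $\delta\le\mub(x)/(2\pi)\le\delta^{-1}$. The bounds $\ell\le U^{\mub}\le\ell^{-1}$ then follow by integrating the two-sided density bound against the kernel (again using that $x\mapsto\int_\T\log|\sin(\tfrac{x-y}{2})|^{-1}\,\d y$ is a finite positive constant by rotational invariance), with a harmless adjustment of constants so both $\ell$ and $\ell^{-1}$ are attainable. The main obstacle is the circularity between (b) and (c): one must be careful to derive \eqref{EL} using only $\mub\ll\d x$ and $\log\mub\in L^1(\mub)$ before using \eqref{EL} to upgrade to the pointwise two-sided bounds, and to justify differentiating the entropy along the segment $\mu_t$ (dominated convergence after the convexity estimate $\frac{\d}{\d t}\int\mu_t\log(\mu_t/\mu_0^V)\,\d x$, using that the integrand's $t$-derivative is monotone in $t$).
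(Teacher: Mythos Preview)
Your overall strategy matches the paper's: establish existence/uniqueness via lower semicontinuity and strict convexity, derive the Euler--Lagrange equation by a first-variation argument, then bootstrap from \eqref{EL} to the two-sided density bounds. The paper, however, perturbs multiplicatively by $(1+\epsilon\phi)\mub$ with $\int\phi\,\d\mub=0$ (which keeps the competitor inside the class of measures equivalent to $\mub$ and makes the entropy derivative clean), whereas you perturb additively along $\mu_t=(1-t)\mub+t\nu$. Both routes can be made to work, but your version requires more care at one point that you do not actually handle.

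The gap is the set $A_0=\{\mub=0\}$. You write that the circularity is resolved because $\mub\ll\d x$ and $\log\mub\in L^1(\mub)$ already suffice to ``carry out the variation and derive \eqref{EL}''. They do not: with only these inputs, the first variation yields the Euler--Lagrange identity merely $\mub$-a.e., and then your bootstrap gives the lower bound on $\mub$ only $\mub$-a.e., which says nothing about $A_0$. The paper addresses this head-on \emph{before} deriving \eqref{EL}: assuming $|A_0|>0$, it takes $\eta=|A_0|^{-1}\mathbf 1_{A_0}\,\d x$ and computes $F_\beta^V((1-\epsilon)\mub+\epsilon\eta)=F_\beta^V(\mub)+\epsilon(C+\log\epsilon)+O(\epsilon^2)$, so the $\epsilon\log\epsilon$ term forces a strict decrease for small $\epsilon$, contradicting minimality. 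Your additive variation in fact contains the same mechanism---if $\nu$ charges $A_0$ then the right derivative of the entropy at $t=0^+$ is $-\infty$---but you need to state and use this explicitly rather than asserting that $L^1(\mub)$-integrability of $\log\mub$ is enough. Once $|A_0|=0$ is in hand, the rest of your bootstrap (upper bound from $U^{\mub}\ge 0$, then $U^{\mub}$ bounded above, then lower bound) is exactly what the paper does.
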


Part (c) of the proposition is usually referred as the Euler-Lagrange equation.


\begin{remark}\label{rk:uniform} If $V=0$, then $\mub$ is the uniform measure~$\tfrac{\d x}{2\pi}$ because of the rotational invariance. 
  One can also check  it satisfies \eqref{EL}  since, for any $x\in\T$, 
\begin{equation} \label{U_uniform}
U^{\tfrac{\d x}{2\pi}}(x)  = 
 \int_\T \log\left|\frac{1-\e^{\ic(x-y)}}{2}\right|^{-1}\frac{\d y}{2\pi} = \log 2. 
\end{equation}
Thus, the  Euler-Lagrange constant reads $C_\beta^0= 2\beta \log 2-\log(2\pi)$. 
\end{remark}

\begin{remark} Part (a) of the proposition follows from well known results. Although part (b) and (c) seem to be part of the folklore, we were not able to locate (b) and (c) proven in full details in the literature; the little subtlety is to take care of the sets where  the density of $\mub$ may a priori vanish or be arbitrary close to zero due to the term $\log\mub$. 
\end{remark}
\begin{proof}[Proof of Proposition~\ref{propEqM}]

It is known that both mappings $\mu\mapsto\cE(\mu)$ and $\mu\mapsto \H(\mu|\mu_0^V)$  have compact level sets on $\cM_1(\T)$ and are strictly convex there, see \citep{SaffTotik, Dembo-Zeitouni}, from which  (a) directly follows. Moreover, since $F_\beta(\frac{\d x}{2\pi})<\infty$ we have $\cE(\mub)<\infty$ and $\H(\mub|\mu_0^V)<\infty$, and in particular $\mub$ is absolutely continuous.

Let $\mub:\T\to\R$ be any measurable function  such that $\mub(\d x)=\mub(x)\d x$.   We first claim that the Borel set $A_0:=\{x\in\T:\;\mub(x)=0\}$ has null Lebesgue measure. Indeed, otherwise we could define  $\eta:=|A_0|^{-1} \bs 1_{A_0}(x)\d x\in\cM_1(\T)$ and obtain, for any $0<\epsilon<1$,
\begin{align*}
\Fb((1-\epsilon)\mub+\epsilon\eta)& = \Fb(\mub)+\epsilon\left(\int (2\beta U^{\mub}+V)\d(\eta-\mu)+\int\log\eta\,\d\eta-\int \log\mub\,\d\mub\right)\\
& \qquad +\epsilon\log\epsilon+(1-\epsilon)\log(1-\epsilon)+\epsilon^2\beta\cE(\mub-\eta).
\end{align*}
This yields in turn
$$
\Fb((1-\epsilon)\mub+\epsilon\eta)= \Fb(\mub)+\epsilon(C+\log\epsilon)+\cO(\epsilon^2)
$$
when $\epsilon\to0$ for some $C\in\R$ and, since $\epsilon(C+\log\epsilon)+\cO(\epsilon^2)$ is negative for every $\epsilon>0$ small enough, this contradicts the fact that  $\mub$ is the unique minimizer. Thus $|A_0|=0.$

We next prove a weak form of (c). Let $\phi:\T\to\R$ be a measurable and bounded function satisfying $\int \phi\,\d\mub=0$. Then, for any real $|\epsilon| \le \|\phi\|_\infty^{-1}$, we have $(1+\epsilon\phi)\mub\in\cM_1(\T)$ and 
\begin{align*}
\Fb((1+\epsilon\phi)\mub) & =\Fb(\mub)+\epsilon\int \big(2\beta U^{\mub}+V+\log\mub\big)\phi\,\d\mub\\
& \qquad+\epsilon^2\cE(\phi\,\mub) + \int (1+\epsilon\phi)\log(1+\epsilon \phi)\,\d\mub. 
\end{align*}
By definition of $\mub$, the mapping $\epsilon\mapsto\Fb((1+\epsilon\phi)\mub)$ has a unique minimum at $\epsilon=0$ and, since $\int (1+\epsilon\phi)\log(1+\epsilon \,\phi)\,\d\mub=\cO(\epsilon^2)$,  we obtain
$$
\int \big(\beta U^{\mub}+V+\log\mub\big)\phi\,\d\mub= 0
$$
for any such $\phi$'s. If $\eta\in\cM_1(\T)$ has a bounded density $\psi$ with respect to $\mub$, 
then by taking $\phi:=\psi-1$ in the previous identity we obtain
\eq
\label{ELweak}
\int \big(2\beta U^{\mub}+V+\log\mub\big)\,\d\eta= C_\beta^V:=\int \big(2\beta U^{\mub}+V+\log\mub\big)\,\d\mub.
\qe
Now, if one assumes $A:=\{x\in\T:\; 2\beta U^{\mub}(x)+V(x)+\log\mub(x)>C_\beta^V\}$ has $\mub$-positive measure, then by taking $\eta(\d x):=\mub(A)^{-1}\bs 1_{A}(x)\mub(\d x)$ in \eqref{ELweak} we reach a contradiction. Since the same holds after replacing $>$ by $<$ we obtain  
\eq
\label{EL1}
2\beta U^{\mub}+V+\log\mub=C_\beta^V,\qquad \mub\text{-a.e.}
\qe

We are now equipped to prove (b) and (c). Using that $ U^{\mub}\geq 0$ on $\T$, we obtain from \eqref{EL1} that $\mub(x)\leq c\,\e^{-V(x)}$ $\mub$-a.e for some $c>0$, and thus the same holds true (Lebesgue)-a.e. In particular, since $V$ is bounded by assumption, there exists  $C>0$ such that $\mub(x)\leq \tfrac{C}{2\pi}$ for a.e.~$x\in\T$.  This yields in turn with \eqref{U_uniform} that
$
U^{\mub}(x)\leq CU^{\frac{\d x}{2\pi}}(x)=C\log2
$
on $\T$.
Next, let $A_\kappa:=\{x\in\T:\; \mub(x)\leq\kappa\}$ for any $0<\kappa<1$. If $\mub(A_\kappa)>0$, then by taking the measure $\eta(\d x):=(\mub(A_\kappa))^{-1}\bs 1_{A_\kappa}(x)\mub(\d x)$ in \eqref{ELweak} we obtain
$$
\,C_\beta^V\leq 2\beta C\log2+\|V\|_{L^\infty}+\log \kappa
$$
and thus $\mub(A_\kappa)=0$ for every $\kappa>0$ small enough.  Since we have already shown that $|A_0|=0$, this means that  $|A_\kappa|=0$ for every $\kappa>0$ small enough, and the first claim of (b) is proven. Since the function $x\mapsto \log|\sin(\tfrac{x}2)|^{-1}$ is non-negative and integrable on $\T$,  the second claims follows as well.

Finally, this yields that the equation \eqref{EL1} holds a.e. and thus (c) is proven. 
\end{proof}

\begin{corollary} 
\label{corcool}For any $\mu\in\cM_1(\T)$ satisfying  $\cE(\mu)<\infty$, we have
 $$
 F_\beta^V(\mu)-F_\beta^V(\mub)=\beta\,\cE(\mu-\mub)+\H(\mu|\mub).
 $$
\end{corollary}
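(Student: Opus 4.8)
The plan is to expand
\[
F_\beta^V(\mu)-F_\beta^V(\mub)=\beta\big(\cE(\mu)-\cE(\mub)\big)+\big(\H(\mu|\mu_0^V)-\H(\mub|\mu_0^V)\big)
\]
around $\mub$, using two elementary identities: the bilinearity of the logarithmic energy and the chain rule for the relative entropy, namely
\[
\cE(\mu-\mub)=\cE(\mu)-2\!\int U^{\mub}\,\d\mu+\cE(\mub),
\qquad
\H(\mu|\mu_0^V)=\H(\mu|\mub)+\int\log\tfrac{\d\mub}{\d\mu_0^V}\,\d\mu ,
\]
and then invoking the Euler--Lagrange equation of Proposition~\ref{propEqM}(c) to cancel the terms that are linear in $\mu$.

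For the energy term, I would first check that the right-hand side of the first identity consists of finite quantities: $\cE(\mu)<\infty$ by hypothesis, $\cE(\mub)<\infty$ as observed in the proof of Proposition~\ref{propEqM}, and $\int U^{\mub}\,\d\mu<\infty$ because $U^{\mub}$ is bounded on $\T$ by Proposition~\ref{propEqM}(b). Hence $\cE(\mu-\mub)$ is well defined (as this finite combination) and $\beta\cE(\mu)=\beta\cE(\mu-\mub)+2\beta\int U^{\mub}\,\d\mu-\beta\cE(\mub)$.

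For the entropy term I would distinguish two cases. Since $\tfrac{\d\mub}{\d\mu_0^V}(x)=2\pi\,\e^{V(x)}\mub(x)$ is bounded above and below by positive constants (Proposition~\ref{propEqM}(b) together with the boundedness of $V$), the measures $\mub$ and $\mu_0^V$ are mutually absolutely continuous with bounded Radon--Nikodym densities; consequently $\H(\mu|\mub)=+\infty$ if and only if $\H(\mu|\mu_0^V)=+\infty$, in which case both sides of the claimed identity equal $+\infty$ and there is nothing to prove. Otherwise $\mu$ is absolutely continuous, the chain rule applies, and writing $\log\tfrac{\d\mub}{\d\mu_0^V}=V+\log\mub+\log(2\pi)$ and substituting $V+\log\mub=C_\beta^V-2\beta U^{\mub}$ from the Euler--Lagrange equation gives $\log\tfrac{\d\mub}{\d\mu_0^V}=\log(2\pi)+C_\beta^V-2\beta U^{\mub}$, hence $\H(\mu|\mu_0^V)=\H(\mu|\mub)+\log(2\pi)+C_\beta^V-2\beta\int U^{\mub}\,\d\mu$.

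Summing the two displays, the terms $\pm2\beta\int U^{\mub}\,\d\mu$ cancel and one is left with $F_\beta^V(\mu)=\beta\cE(\mu-\mub)+\H(\mu|\mub)+\kappa$, where $\kappa:=-\beta\cE(\mub)+\log(2\pi)+C_\beta^V$ is a constant independent of $\mu$. Taking $\mu=\mub$ (which is admissible since $\cE(\mub)<\infty$), for which $\cE(\mub-\mub)=0$ and $\H(\mub|\mub)=0$, identifies $\kappa=F_\beta^V(\mub)$, and the corollary follows. I do not expect a genuine obstacle here: everything substantial is contained in Proposition~\ref{propEqM} (the two-sided bounds on $\mub$ and $U^{\mub}$, and the Euler--Lagrange equation), and the only points needing a word of care are the finiteness that legitimises the bilinear expansion of $\cE$ and the degenerate case of infinite relative entropy.
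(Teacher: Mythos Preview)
Your proof is correct and follows essentially the same route as the paper's: both expand the energy bilinearly and use the Euler--Lagrange equation \eqref{EL} to cancel the term linear in $\mu$, the only cosmetic difference being that you package the entropy computation as a chain rule $\H(\mu|\mu_0^V)=\H(\mu|\mub)+\int\log\tfrac{\d\mub}{\d\mu_0^V}\,\d\mu$ and identify the constant by evaluating at $\mu=\mub$, whereas the paper integrates \eqref{EL} against $\mu$ and $\mub$ separately and subtracts. Your handling of the degenerate case $\H(\mu|\mu_0^V)=+\infty$ is in fact slightly more careful than the paper's.
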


\begin{proof}One can assume $\mu$ has a density which satisfies $\int \log\mu\,\d\mu<\infty$ since the identity is otherwise trivial. Similarly, one can assume $\cE(\mu)<\infty$ so that $\cE(\mu-\mub)$ makes sense (and is non-negative), see \cite[Lemma 1.8]{SaffTotik}.  By integrating \eqref{EL} against $\mu$  this yields
\eq
\label{CVbI}
C_\beta^V= 2\beta\int U^{\mub}\d\mu+\int V\,\d\mu+\int \log\mub\d\mu.
\qe
In particular, we obtain by taking $\mu=\mub$ and subtracting the resulting identity to \eqref{CVbI},
$$
\int V\,\d(\mu-\mub)=2\beta \,\cE(\mub)-2\beta \int U^{\mub}\,\d\mu -\int\log\mub\,\d\mu + \int\log\mub\,\d\mub.
$$
The latter identity plugged into $\Fb(\mu)-\Fb(\mub)$ yields the corollary.
\end{proof}

We also describe the behavior as $\beta\to 0$ and $\beta\to\infty$ of the equilibrium measure.

\begin{lemma}  \label{lem:weakcvg} If $V:\T\to\R$ is measurable and bounded, then we have the weak convergences
$$
\lim_{\beta\to 0}\mub=\mu_0^V\quad\text{ and }\quad  \lim_{\beta\to \infty}\mub=\mu_\infty^0=\frac{\d x}{2\pi}.
$$
If we further assume $V$ is lower semicontinuous and that $\mu_\infty^V$ has a density which satisfies $\int\log\mu_\infty^V\,\d\mu_\infty^V<\infty$, then we have the weak convergence
$$
 \lim_{\beta\to \infty}\mu_{\beta}^{\beta V}=\mu_\infty^V.
$$
\end{lemma}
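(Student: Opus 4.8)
My plan is to derive the three weak convergences from one scheme: bound the functional being minimized, evaluated at its minimizer, from above by its value at a conveniently chosen competitor, and then pass to the limit using weak lower semicontinuity, the weak compactness of $\cM_1(\T)$ (valid since $\T$ is compact), and uniqueness of minimizers via strict convexity. I will use freely that $\cE$ is weakly lower semicontinuous and strictly convex on $\cM_1(\T)$, that $\H(\cdot\,|\,\nu)$ enjoys the same two properties for any fixed $\nu\in\cM_1(\T)$, and that $\mu\mapsto\int V\,\d\mu$ is weakly lower semicontinuous whenever $V$ is lower semicontinuous and bounded below, so that $F_\infty^V$ is then weakly lower semicontinuous and strictly convex, hence has a unique minimizer $\mu_\infty^V$ as soon as it is finite at one point. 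I will also use $\cE(\tfrac{\d x}{2\pi})=\log2$, which is immediate from \eqref{U_uniform} and the definition of $\cE$, together with the fact (recalled after Theorem~\ref{thm:lln}) that $\tfrac{\d x}{2\pi}=\mu_\infty^0$ is the unique minimizer of $\cE=F_\infty^0$ on $\cM_1(\T)$.

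For $\beta\to0$, I would compare $\mub$ with $\mu_0^V$: since $V$ is bounded, $\mu_0^V$ has bounded density, so $\cE(\mu_0^V)<\infty$, and minimality of $\mub$ (Proposition~\ref{propEqM}) gives $0\le\H(\mub\,|\,\mu_0^V)\le \Fb(\mub)\le\Fb(\mu_0^V)=\beta\,\cE(\mu_0^V)\to0$. Any weak subsequential limit $\mu_\star$ of $(\mub)$ then satisfies $\H(\mu_\star\,|\,\mu_0^V)\le\liminf_\beta\H(\mub\,|\,\mu_0^V)=0$ by lower semicontinuity, hence $\mu_\star=\mu_0^V$, and weak compactness of $\cM_1(\T)$ upgrades this to $\mub\to\mu_0^V$ (alternatively, Pinsker's inequality gives convergence in total variation). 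For $\beta\to\infty$ with $V$ fixed I take $\tfrac{\d x}{2\pi}$ as competitor: combining $\Fb(\mub)\le\Fb(\tfrac{\d x}{2\pi})=\beta\log2+\H(\tfrac{\d x}{2\pi}\,|\,\mu_0^V)$ with the crude lower bound $\Fb(\mub)\ge\beta\,\cE(\mub)-\|V\|_{L^\infty}$ (which follows from $\H(\mub\,|\,\mu_0^V)\ge-\|V\|_{L^\infty}$, itself a consequence of Jensen's inequality and boundedness of $V$), I get $\cE(\mub)\le\log2+\cO(1/\beta)$; the same lower-semicontinuity and compactness argument, now applied with the unique minimizer of $\cE$, forces $\mub\to\tfrac{\d x}{2\pi}=\mu_\infty^0$.

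For the last convergence I would first record the elementary identity, valid for every $\nu\in\cM_1(\T)$,
\[
F_\beta^{\beta V}(\nu)=\beta\,F_\infty^V(\nu)+\H\big(\nu\,\big|\,\tfrac{\d x}{2\pi}\big)+c_\beta,\qquad c_\beta:=\log\int_\T\e^{-\beta V}\,\tfrac{\d x}{2\pi},
\]
obtained by expanding the relative entropy of $\nu$ with respect to $\mu_0^{\beta V}\propto\e^{-\beta V}\tfrac{\d x}{2\pi}$; the constant $c_\beta$, which makes $\mu_0^{\beta V}$ a probability measure, is finite because $V$ is bounded below, and is irrelevant since it cancels when one compares $F_\beta^{\beta V}$-values. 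Comparing the minimizer $\mu_\beta^{\beta V}$ with $\mu_\infty^V$ and dropping the nonnegative term $\H(\mu_\beta^{\beta V}\,|\,\tfrac{\d x}{2\pi})$ yields
\[
F_\infty^V(\mu_\beta^{\beta V})\le F_\infty^V(\mu_\infty^V)+\tfrac1\beta\,\H\big(\mu_\infty^V\,\big|\,\tfrac{\d x}{2\pi}\big)=\min_{\cM_1(\T)}F_\infty^V+\tfrac1\beta\Big(\int\log\mu_\infty^V\,\d\mu_\infty^V+\log2\pi\Big),
\]
and the right-hand side tends to $\min_{\cM_1(\T)}F_\infty^V$ precisely because of the standing hypothesis $\int\log\mu_\infty^V\,\d\mu_\infty^V<\infty$. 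Lower semicontinuity of $F_\infty^V$ (this is where lower semicontinuity of $V$ is used), uniqueness of its minimizer, and weak compactness of $\cM_1(\T)$ then give $\mu_\beta^{\beta V}\to\mu_\infty^V$.

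I expect the only slightly delicate points to lie in this last step, and they are bookkeeping rather than conceptual. One must verify that $\mu_\infty^V$ is an admissible competitor, i.e. that $F_\infty^V$ is finite somewhere and that $F_\infty^V(\mu_\infty^V)<\infty$: the former holds because, $V$ being real-valued, the normalized Lebesgue measure on $\{V\le M\}$ has bounded density and hence finite $F_\infty^V$-value for $M$ large; the latter then follows since $\cE\ge0$ and $\int V\,\d\mu\ge\min V>-\infty$, so finiteness of their sum forces finiteness of each. One must also ensure that the correction term genuinely vanishes, which is exactly what the hypothesis on the logarithmic entropy of $\mu_\infty^V$ guarantees; without it the argument only yields $\limsup_{\beta\to\infty}F_\infty^V(\mu_\beta^{\beta V})\le\min F_\infty^V+\tfrac1\beta\cdot(+\infty)$, which is vacuous.
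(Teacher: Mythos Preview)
Your proof is correct and follows essentially the same scheme as the paper: for each of the three limits you compare $F_\beta$ at its minimizer with its value at the target measure (respectively $\mu_0^V$, $\tfrac{\d x}{2\pi}$, $\mu_\infty^V$), extract an upper bound for the relevant limiting functional at $\mub$, and conclude via weak lower semicontinuity, compactness of $\cM_1(\T)$, and uniqueness of the minimizer. The only cosmetic differences are that in the $\beta\to\infty$ step with fixed $V$ you invoke the bound $\H(\mub|\mu_0^V)\ge-\|V\|_{L^\infty}$ where the paper simply uses $\H\ge0$ (your bound is correct but weaker than needed), and in the last step you keep explicit track of the normalization constant $c_\beta$ for $\mu_0^{\beta V}$, which the paper silently absorbs.
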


Note that $V$ is lower semicontinous and does not take the value $+\infty$ ensures that $F_\infty^V$ is lower semicontinuous and  has a unique  minimizer $\mu_\infty^V$ on $\cM_1(\T)$, see \citep{SaffTotik}.
\begin{proof}

First, since $\mathcal E$ is positive, $\mub$ minimizes $F_\beta$, $\H(\mu_0^V|\mu_0^V)=0$  and $\mathcal E(\mu_0^V)<\infty$, we have
$$
\H(\mub|\mu_0^V)\leq F_\beta(\mub)\leq F_\beta(\mu_0^V)=\beta \mathcal E(\mu_0^V)\xrightarrow[\beta\to 0]{} 0.
$$
Since $\mu\mapsto\H(\mu|\mu_0^V)$ has for unique minimizer $\mu_0^V$ and is lower semicontinuous on $\cM_1(\T)$, which is weakly compact,  this implies the weak convergence $\mub\to\mu_0^V$ as $\beta\to 0$.

Next, recall that $\mu_\infty^0=\frac{\d x}{2\pi}$ is the unique minimizer of $\cE$ on $\cM_1(\T)$. Since
$$
\beta \,\cE(\mu_\beta^{V})+\H(\mu_\beta^{ V}|\mu_0^V) = F_\beta^{ V}(\mu_\beta^{ V})\leq F_\beta^{ V}(\tfrac{\d x}{2\pi})\leq 
  \beta \,\cE(\mu_\beta^{ V})+\H(\tfrac{\d x}{2\pi}|\mu_0^V)
$$
we obtain that $\H(\mub|\mu_0^V)\leq \H(\tfrac{\d x}{2\pi}|\mu_0^V)=\int_\T V \frac{\d x}{2\pi} <\infty$ for every $\beta>0$, and moreover
\eq
\label{squeezedE}
\limsup_{\beta\to\infty}\cE(\mu_\beta^{V})=\limsup_{\beta\to\infty}\frac1\beta F_\beta^{ V}(\mu_\beta^{ V})\leq  \limsup_{\beta\to\infty}\frac1\beta F_\beta^{ V}(\tfrac{\d x}{2\pi})=\cE(\tfrac{\d x}{2\pi}).
\qe
Since $\cE$ is lower semicontinuous on $\cM_1(\T)$, this similarly yields the weak convergence $\mub\to\frac{\d x}{2\pi}$ as $\beta\to\infty$.

Finally, by observing that $F_{\beta}^{\beta V}(\mu)=\beta F_\infty^V(\mu)+\int \log(2\pi\mu)\,\d\mu$ provided $\mu\in\cM_1(\T)$ has a density, we have
\begin{align*}
 \beta  F_\infty^{ V}(\mu_\beta^{\beta V})+\int \log(2\pi\mu_\beta^{\beta V})\,\d\mu_\beta^{\beta V} & = F_\beta^{\beta V}(\mu_\beta^{\beta V})\\
 & \leq F_\beta^{\beta V}(\mu_\infty^V)\\
 & = 
  \beta F_\infty^{ V}(\mu_\infty^V)+\int\log(2\pi\mu_\infty^V)\,\d\mu_\infty^V
\end{align*}
Thus, if  $\int\log\mu_\infty^V\,\d\mu_\infty^V<\infty$, after dividing by $\beta>0$ and taking the limit as $\beta\to+\infty$, this implies  
$$
\limsup_{\beta\to\infty}F_\infty^{ V}(\mu_\beta^{\beta V}) \leq F_\infty^V(\mu_\infty^V)
$$
and the weak convergence $\mu_\beta^{\beta V}\to\mu_\infty^V$ as $\beta\to\infty$ is obtained as well. 
\end{proof}
%
%
Next, we study the regularity of the equilibrium measure and its potential. Recall the Hilbert transform $\U$ acting on the Hilbert space $L^2(\T)$  is defined in \eqref{def:H0}.  We can also define $\U\mu$ for $\mu\in\cM_1(\T)$ as soon as it has a density $\mu(x)$. Note that $\U$ acts in a simple fashion  on the Fourier basis: $\U(1)=0$ and, if $k\in\N\setminus\{0\}$, 
\[
\begin{aligned}
\U(\e^{\ic kx})
 = \frac{\ic\e^{\ic k x}}{2\pi} \int_\T \frac{1- \e^{\ic k(t-x)} }{1- \e^{\ic (t-x)}}(1+\e^{\ic (t-x)})\d t
= \ic \e^{\ic k x} .
\end{aligned}\]
By taking the complex conjugate, this implies that for every $k\in\mathbb{Z}$, 
\eq
\label{FourierH}
\U(\e^{\ic kx}) = \ic\sgn(k) \e^{\ic kx}
\qe
where we set $\sgn(0):=0$. This yields that $\U:L^2(\T)\to L^2_0(\T)$ is a well-defined  bounded operator with adjoint $\U^*= -\U$.  Moreover,  when restricted to $L^2_0(\T)$, this turns $\U$ into an isometry  which satisfies $\U^{-1} = -\U$.
 We will also use that this implies that for any $f\in \Hi^{1}(\T)$, $Hf$ also belong to the Sobolev space $\Hi^{1}(\T)$ and that $(Hf)' = H(f')$. In the sequel, we will use these properties of the Hilbert transform at several instances. 


\begin{lemma} 
\label{le:derU} If $V$ is measurable and bounded, then  $U^{\mub}\in \Hi^{1}(\T)$ and  $(U^{\mub})'=\pi\U\mub$. 

\end{lemma}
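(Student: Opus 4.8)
The plan is to compute the distributional derivative of $U^{\mu_\beta^V}$ directly from its integral representation and identify it with $\pi \U \mu_\beta^V$, then upgrade this to membership in $\Hi^1(\T)$ using the regularity already established for $\mub$ in Proposition~\ref{propEqM}. First I would recall that $U^{\mu}(x) = \int \log|\sin(\tfrac{x-y}{2})|^{-1}\,\mu(\d y)$ and that, since $\mub$ has a bounded density (Proposition~\ref{propEqM}(b)), $U^{\mub}$ is the convolution of the bounded function $\mub$ against the fixed $L^1(\T)$ kernel $k(x) := \log|\sin(\tfrac x2)|^{-1}$. Hence $U^{\mub}\in L^2(\T)$ (in fact it is bounded, again by (b)), and it suffices to compute its Fourier coefficients: by the convolution theorem $\widehat{U^{\mub}}_n = \hat k_n \cdot \widehat{\mub}_n$ (up to the normalization of convolution on $\T$). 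A standard computation gives $\hat k_0 = \log 2$ and $\hat k_n = \tfrac{1}{2|n|}$ for $n\neq 0$, so that $\widehat{U^{\mub}}_n = \tfrac{1}{2|n|}\widehat{\mub}_n$ for $n \neq 0$.

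Next I would compare this with the Fourier expansion of $\U\mub$. From the identity $\U(\e^{\ic n x}) = \ic\,\sgn(n)\,\e^{\ic n x}$ recorded just before the lemma, one has $\widehat{(\U\mub)}_n = \ic\,\sgn(n)\,\widehat{\mub}_n$. Formally differentiating the Fourier series of $U^{\mub}$ term by term gives a candidate derivative with $n$-th coefficient $\ic n \cdot \tfrac{1}{2|n|}\widehat{\mub}_n = \tfrac{\ic}{2}\sgn(n)\widehat{\mub}_n = \tfrac12 \widehat{(\U\mub)}_n$. To make this rigorous as a statement about distributional derivatives, I would argue that the series $\sum_n \ic n \,\widehat{U^{\mub}}_n \e^{\ic n x} = \tfrac12 \sum_n \widehat{(\U\mub)}_n \e^{\ic nx}$ converges in $L^2(\T)$: indeed $\sum_n |n|^2 |\widehat{U^{\mub}}_n|^2 = \tfrac14 \sum_{n\neq 0} |\widehat{\mub}_n|^2 \leq \tfrac14 \|\mub\|_{L^2}^2 < \infty$ since $\mub\in L^\infty(\T)\subset L^2(\T)$. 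An $L^2$-convergent differentiated Fourier series is precisely the weak derivative, so $U^{\mub}\in H^1(\T)$ with $(U^{\mub})' = \tfrac12\sum_n \widehat{(\U\mub)}_n\e^{\ic nx}$. It remains only to check the constant $\pi$: with the paper's normalization $\hat\psi_k = \int_\T \psi\,\e^{-\ic kx}\tfrac{\d x}{2\pi}$, the convolution $f*g(x) := \int_\T f(x-y)g(y)\,\d y$ (note the $\d y$, not $\tfrac{\d y}{2\pi}$, as forced by writing $U^{\mub}(x) = \int k(x-y)\mub(y)\,\d y$) satisfies $\widehat{f*g}_n = 2\pi\,\hat f_n\,\hat g_n$, which shifts the computation by a factor $2\pi$ and turns the $\tfrac12$ into $\pi$, yielding $(U^{\mub})' = \pi\,\U\mub$ as claimed.

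The main obstacle, such as it is, is purely bookkeeping: tracking the factors of $2\pi$ through the non-standard convolution normalization (the kernel integral carries $\d y$ while Fourier coefficients carry $\tfrac{\d y}{2\pi}$) and confirming $\hat k_n = \tfrac{1}{2|n|}$, which follows from the classical expansion $\log|2\sin(\tfrac x2)|^{-1} = \sum_{n\geq 1}\tfrac{\cos nx}{n}$ rewritten as $\log|\sin(\tfrac x2)|^{-1} = \log 2 + \sum_{n\geq 1}\tfrac{\cos nx}{n}$, i.e. $\hat k_n = \tfrac{1}{2|n|}$ for $n\neq 0$ and $\hat k_0 = \log 2$. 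One should also note that the term-by-term differentiation is legitimate precisely because we only need weak-derivative convergence in $L^2$, so no pointwise or uniform convergence issue arises; the principal-value subtlety in the definition of $\U$ is likewise invisible at the level of Fourier multipliers since $\U$ is a bounded operator on $L^2(\T)$ and $\mub\in L^2(\T)$. An alternative, more hands-on route would be to differentiate $\int k(x-y)\mub(y)\,\d y$ under the integral sign in the distributional sense, using that $k'(x) = -\tfrac12\cot(\tfrac x2)$ is exactly (a multiple of) the principal-value kernel in \eqref{def:H0}; this gives the identity directly but requires justifying the principal-value interchange, which the Fourier argument sidesteps cleanly.
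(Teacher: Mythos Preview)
Your proof is correct but takes a different route from the paper. The paper argues by duality: for a test function $\vartheta\in\Co^1(\T)$ it computes $U^{\vartheta'}(x)=\pi\U\vartheta(x)$ via an integration by parts that turns the log-sine kernel into the cotangent principal-value kernel, and then uses Fubini together with $\U^*=-\U$ to conclude $\langle\vartheta',U^{\mub}\rangle_{L^2}=-\langle\vartheta,\pi\U\mub\rangle_{L^2}$, which identifies $\pi\U\mub$ as the distributional derivative. You instead work entirely on the Fourier side, exploiting that $U^{\mub}=k*\mub$ with $\hat k_n=\tfrac1{2|n|}$ and that $\U$ is the multiplier $\ic\,\sgn(n)$, so term-by-term differentiation gives the claim once you track the $2\pi$ from the convolution normalization. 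Your approach is more computational but very transparent, and the $L^2$ summability of the differentiated series immediately gives $U^{\mub}\in\Hi^1(\T)$; the paper's approach is softer in that it never needs the explicit Fourier expansion of $k$, only $k'(x)=-\tfrac12\cot(\tfrac x2)$, and is in fact precisely the ``alternative, more hands-on route'' you sketch in your last paragraph, with the principal-value interchange justified by pairing against smooth test functions.
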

\begin{proof}  
For any $\vartheta \in \Co^1(\T),$ by using the definition of the Cauchy principle value and doing an integration by part we obtain,  for every $x\in\T$,
\begin{align*}
 U^{\vartheta'} \hspace{-.1cm}(x) & =  \int_\T \log\left|\sin\left(\frac{x-t}{2}\right)\right|^{-1}\vartheta'(t) \d t\\
 &  = - \mathrm{p.v.}\int_\T \frac{1}{2\tan\left(\frac{x-t}{2}\right)} \,\vartheta(t) \d t \\
 & = \pi \U\vartheta(x).
 \end{align*}
 Next, using Fubini theorem and that $\U$ is a bounded operator  on $L^2(\T)$ satisfying $\U^*= -\U$, we obtain
 \begin{align*}
  \langle  \vartheta' ,U^{\mub}\rangle_{L^2}    = \langle U^{\vartheta'},\mub\rangle_{L^2}
   =   \langle\pi\U\vartheta,\mub\rangle_{L^2}
   = -\langle \vartheta, \pi\U\mub\rangle_{L^2}.
 \end{align*}
 This shows that $U^{\mub}$ has a distributional derivative given by $\pi\U\mub$. Moreover,  since the density $\mub(x)$ belongs to $L^2(\T)$ by Proposition~\ref{propEqM}~(b), so does $H\mub$ and thus $(U^{\mub})'\in L^2(\T)$.


\end{proof}

\begin{proposition}
\label{prop:regmub} If $V\in \Hi^{1}(\T)$ then $U^{\mub}\in\Co^{1,1/2}(\T)$. Moreover, if  $V\in \Co^{m,1}(\T)$ for some $m\ge 0$, then $\mub\in\Co^{m,1}(\T)$ .
\end{proposition}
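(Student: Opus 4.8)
The strategy is a bootstrap argument based on the Euler--Lagrange equation \eqref{EL}, which we rewrite as $\mub(x) = \exp\{C_\beta^V - V(x) - 2\beta U^{\mub}(x)\}$. The plan is to exploit the fact that $U^{\mub}$ is \emph{one derivative smoother} than $\mub$ (via Lemma~\ref{le:derU} and the mapping properties of the Hilbert transform recorded above, namely that $\U$ preserves each Sobolev space $\Hi^m(\T)$ and commutes with differentiation), so that the exponential in the Euler--Lagrange equation gains regularity at each step. I would organize this as follows.

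First, for the claim that $V\in\Hi^1(\T)$ implies $U^{\mub}\in\Co^{1,1/2}(\T)$: by Proposition~\ref{propEqM}(b) the density $\mub$ is bounded above and below by positive constants, and by Lemma~\ref{le:derU} we have $(U^{\mub})' = \pi\U\mub\in L^2(\T)$, so $U^{\mub}\in\Hi^1(\T)$. Now from the Euler--Lagrange equation $\log\mub = C_\beta^V - V - 2\beta U^{\mub}$; since $V\in\Hi^1(\T)$ and $U^{\mub}\in\Hi^1(\T)$, and since $\Hi^1(\T)\subset\Co^{1/2}(\T)$ by the Sobolev--H\"older embedding, we get $\log\mub\in\Hi^1(\T)$, hence $\mub = \e^{\log\mub}\in\Hi^1(\T)$ as well (the exponential of an $\Hi^1$ function that is bounded is again $\Hi^1$, because $(\e^{f})' = f'\e^{f}\in L^2$ when $f'\in L^2$ and $f\in L^\infty$). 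Then $\U\mub\in\Hi^1(\T)\subset\Co^{1/2}(\T)$, so $(U^{\mub})' = \pi\U\mub\in\Co^{1/2}(\T)$, which is exactly $U^{\mub}\in\Co^{1,1/2}(\T)$.

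Second, for the claim that $V\in\Co^{m,1}(\T)$ implies $\mub\in\Co^{m,1}(\T)$, I would induct on $m$. The base case $m=0$: $V\in\Co^{0,1}(\T)\subset\Hi^1(\T)$, so by the first part $\mub\in\Hi^1(\T)$; but actually we can do better using $U^{\mub}\in\Co^{1,1/2}$ and feeding back $\log\mub = C_\beta^V - V - 2\beta U^{\mub}\in\Co^{0,1}(\T)$ (as a sum of Lipschitz functions), whence $\mub\in\Co^{0,1}(\T)$. For the inductive step, suppose the statement holds for $m-1$ and let $V\in\Co^{m,1}(\T)$. Then in particular $V\in\Co^{m-1,1}(\T)$, so by induction $\mub\in\Co^{m-1,1}(\T)$; hence $\U\mub\in\Co^{m-1,1}(\T)$ (the Hilbert transform maps $\Co^{k,1}$ into itself, or one argues through $\Hi^{m}(\T)$: $\mub\in\Co^{m-1,1}(\T)\subset\Hi^m(\T)$, so $\U\mub\in\Hi^m(\T)$, and differentiating $(U^{\mub})' = \pi\U\mub$ gives $U^{\mub}\in\Hi^{m+1}(\T)\subset\Co^{m,1/2}(\T)$). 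Combined with $V\in\Co^{m,1}(\T)$, the Euler--Lagrange equation gives $\log\mub = C_\beta^V - V - 2\beta U^{\mub}\in\Co^{m,1/2}(\T)\cap(\text{something }\Co^{m,1})$; composing with the exponential (which is smooth, and $\log\mub$ is bounded) preserves this, so $\mub\in\Co^{m,1/2}(\T)$. To upgrade the H\"older exponent from $1/2$ to $1$ at the top order, I would note that once $\mub\in\Co^{m,1/2}(\T)\subset\Hi^{m+1}(\T)$, the identity $(U^{\mub})'=\pi\U\mub$ gives $U^{\mub}\in\Hi^{m+2}(\T)$, hence $U^{\mub}\in\Co^{m+1,1/2}(\T)\subset\Co^{m,1}(\T)$; then $\log\mub = C_\beta^V - V - 2\beta U^{\mub}$ with $V\in\Co^{m,1}(\T)$ and $U^{\mub}\in\Co^{m,1}(\T)$ yields $\log\mub\in\Co^{m,1}(\T)$, and finally $\mub\in\Co^{m,1}(\T)$.

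The main obstacle is bookkeeping the exact regularity class through each application of the Euler--Lagrange relation: one must be careful that the Hilbert transform, which a priori only gains regularity cleanly on the $L^2$-Sobolev scale $\Hi^m(\T)$ and not on the H\"older scale, is used via the chain $\Co^{k,1}(\T)\subset\Hi^{k+1}(\T)\xrightarrow{\U}\Hi^{k+1}(\T)\xrightarrow{\int}\Hi^{k+2}(\T)\subset\Co^{k+1,1/2}(\T)$, and that closing the loop to recover the sharp Lipschitz exponent requires running the bootstrap one half-step further than naively expected. The other point requiring a small argument is that $f\mapsto\e^f$ preserves $\Co^{k,\alpha}(\T)$ when $f$ is bounded, which follows from the Fa\`a di Bruno / product rule together with the fact that $\Co^{k,\alpha}(\T)$ is a Banach algebra. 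Neither of these is deep, but they must be stated carefully.
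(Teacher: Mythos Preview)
Your overall strategy---bootstrap regularity through the Euler--Lagrange identity $\log\mub = C_\beta^V - V - 2\beta U^{\mub}$---is the same as the paper's, which works instead with the differentiated form $(\mub)' = -\mub(2\beta\pi\U\mub + V')$ and iterates that. Both routes require a two-pass argument at each level (first gain an $L^2$ derivative, then upgrade to $L^\infty$), and your first part and base case $m=0$ are correct.

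There is, however, a concrete error in your inductive step. The inclusion $\Co^{m,1/2}(\T)\subset\Hi^{m+1}(\T)$ that you invoke to ``upgrade the H\"older exponent'' is false: the Sobolev--H\"older embedding runs the other way, and a $\tfrac12$-H\"older function need not lie in $\Hi^1$. (Relatedly, your parenthetical ``the Hilbert transform maps $\Co^{k,1}$ into itself'' is also false---$\U$ does not preserve $L^\infty$ on derivatives---though you correctly fall back on the Sobolev route there.) As written, the induction does not close.

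The fix is exactly the ``half-step further'' you anticipate in your last paragraph, but carried out in the Sobolev scale rather than the H\"older scale. From $\mub\in\Co^{m-1,1}\subset\Hi^m$ you get $U^{\mub}\in\Hi^{m+1}$; since also $V\in\Co^{m,1}\subset\Hi^{m+1}$, the Euler--Lagrange relation gives $\log\mub\in\Hi^{m+1}$ and hence $\mub\in\Hi^{m+1}$ (composition of a bounded $\Hi^{m+1}$ function with the exponential). Now repeat: $\U\mub\in\Hi^{m+1}$ yields $U^{\mub}\in\Hi^{m+2}\subset\Co^{m+1,1/2}\subset\Co^{m,1}$, and feeding this back with $V\in\Co^{m,1}$ gives $\log\mub\in\Co^{m,1}$, whence $\mub\in\Co^{m,1}$. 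This is precisely how the paper's argument proceeds (phrased there as first $(\mub)^{(m+1)}\in L^2$, then $(\mub)^{(m+1)}\in L^\infty$).
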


Note that $V\in \Hi^{1}(\T)$ implies that $V$ is continuous and this ensures the existence of $\mub$.
 
\begin{proof}
By differentiating the Euler-Lagrange equation \eqref{EL} we obtain the distributional identity
\eq
\label{mub'}
(\mub)'=-\mub(2\beta \pi H\mub+ V').
\qe
Since $H\mub\in L^2(\T)$ and $\|\mub\|_{L^\infty}<\infty$ according to Proposition~\ref{propEqM}~(b),  \eqref{mub'} yields that   $\mub\in \Hi^{1}(\T)$ as soon as $V'\in L^2(\T)$. This also shows that $(H\mub)'=H(\mub)'\in L^2(\T)$ and thus $H\mub\in \Hi^{1}(\T)\subset \Co^{1/2}(\T)$. In particular, the first claim follows by Lemma~\ref{le:derU}.  Moreover, if we further assume that $V\in\Co^{0,1}(\T)$,
 then $\|(\mub)'\|_{L^\infty}<\infty$ by \eqref{mub'} and the second statement  is proven for $m=0$.
 
 Next, we differentiate \eqref{mub'}  in order to obtain
\eq
\label{mub''}
(\mub)''=-(\mub)'(2\beta \pi H\mub+ V')-\mub(2\beta \pi H(\mub)'+ V'').
\qe
If we assume $V\in\Co^{1,1}(\T)$, then in particular it is  $\Co^{0,1}$ and we have already shown that  $\|(\mub)'\|_{L^\infty}<\infty$.  Together with \eqref{mub''} this provides $ (\mub)''\in L^2(\T)$, thus $(H(\mub)')'=H(\mub)''\in L^2(\T)$, and this yields in turn $H(\mub)'\in\Co^{1/2}(\T)$. Using \eqref{mub''} again, we obtain $\|(\mub)''\|_{L^\infty}<\infty$ and the claim holds for $m=1$. 

The case $m\geq 2$ follows inductively by differentiating \eqref{mub''} and using the same reasoning. 
\end{proof}

\section{Proof of Theorem~\ref{thm:concentration}}
\label{sect:ThmConc}

We now turn to the proof of Theorem~\ref{thm:concentration}. The proof follows the same strategy than the one in \citep*{CHM18} and is based on combining a Coulomb transport inequality  together with an energy estimate after an appropriate regularization of the empirical measure. The regularization we use here is rather similar to \citep{MMS14} and the technical input   with this respect here is the following lemma.


\begin{lemma}
\label{le:spacing}
Given any configuration of distinct points $x_1,\ldots,x_N\in\T,$  there exists a configuration $y_1,\ldots,y_N\in\T$ satisfying:
$$
\min_{j \neq k} |\e^{\ic y_j}-\e^{\ic y_k}| \geq  \frac{ 2}{5N^{4}},\qquad \sum_{j=1}^N |\e^{\ic x_j} - \e^{\ic y_j}| \leq 1,
$$  
and
$$ 
\sum_{ j\neq k} \log \frac1{|\e^{\ic x_j}-\e^{\ic x_k}|}
\geq  
\sum_{ j\neq k} \log \frac1{|\e^{\ic y_j}-\e^{\ic y_k}|}  - N.
$$

\end{lemma}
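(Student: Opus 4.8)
\emph{Reduction to a sorted configuration and construction of $\y$.}
My plan is to first relabel the $x_i$ in cyclic order on $\T$ and cut the circle at a largest gap: since the $N$ distinct points create $N$ cyclic gaps of total length $2\pi$, one of them has length $\ge 2\pi/N$, and cutting there lets me regard the configuration as $0\le x_1<x_2<\dots<x_N<2\pi$ lying in an arc of length $\le 2\pi-2\pi/N$. Fix $\eta:=\tfrac{\pi}{5N^{4}}$ and define $y_1:=x_1$ and $y_{j+1}:=\max\{x_{j+1},\,y_j+\eta\}$. By construction every consecutive gap satisfies $y_{j+1}-y_j\ge\eta$, hence $y_k-y_j\ge(k-j)\eta$ for $j<k$; and writing $\delta_j:=y_j-x_j\ge 0$, one has $\delta_1=0$ and $\delta_{j+1}=\big(\delta_j-(x_{j+1}-x_j)+\eta\big)^{+}$, so $\delta_{j+1}\le\delta_j+\eta$ and therefore $0\le\delta_j\le(j-1)\eta$. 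In particular $y_N-y_1\le(x_N-x_1)+(N-1)\eta<2\pi$, so the $y_j$ remain in an arc of length $<2\pi$ and do not wrap around.

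\emph{The two elementary bounds.}
The separation bound is immediate: every consecutive arc-gap of the $y_j$ is $\ge\eta$, while the remaining wrap-around gap equals the largest original gap minus $\delta_N$, hence $\ge 2\pi/N-(N-1)\eta\ge\eta$; therefore $d_\T(y_j,y_k)\ge\eta$ for all $j\ne k$, and $|\e^{\ic y_j}-\e^{\ic y_k}|\ge\tfrac2\pi d_\T(y_j,y_k)\ge\tfrac2\pi\eta=\tfrac{2}{5N^{4}}$. For the transport cost, $|\e^{\ic x_j}-\e^{\ic y_j}|\le d_\T(x_j,y_j)\le\delta_j\le(j-1)\eta$, so $\sum_{j}|\e^{\ic x_j}-\e^{\ic y_j}|\le\binom N2\eta<1$. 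Both use only the structure of the greedy push.

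\emph{The energy bound.}
It remains to prove $\Ha(\x)\ge\Ha(\y)-N/2$, i.e.\ that moving $\x$ to $\y$ does not lower the discrete logarithmic energy by more than $N/2$. The structural observation I would exploit is that $\x$ is obtained from $\y$ by displacing all points in a single direction while keeping their cyclic order, so along the segment $\x(t):=(1-t)\y+t\x$ each difference $y_i(t)-y_j(t)$ (for $i<j$) stays in a fixed interval of length $<2\pi$ on which $u\mapsto-\log|2\sin(u/2)|$ is smooth and convex; hence $t\mapsto\Ha(\x(t))$ is convex on $[0,1]$, and
\[
\Ha(\x)-\Ha(\y)\ \ge\ \nabla\Ha(\y)\cdot(\x-\y)\ =\ -\sum_{m=1}^{N}\delta_m\,\partial_{y_m}\Ha(\y)\ =\ \tfrac12\sum_{j<m}(\delta_m-\delta_j)\cot\!\Big(\tfrac{y_m-y_j}{2}\Big).
\]
One then has to bound this linear term from below by $-N/2$. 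I would split pairs $(j,m)$ according to whether they lie in the same maximal run of the greedy push — where the $y$'s are exactly $\eta$-spaced, so $y_m-y_j=(m-j)\eta$, $\delta_m-\delta_j=(m-j)\eta-(x_m-x_j)$, and the sum can be treated by summation by parts using $\cot((m-j)\eta/2)\le 2/((m-j)\eta)$ together with the run-constraint $x_{a+p}-x_a<p\eta$ (morally: spreading a cluster cannot push its energy below that of the equally-spaced configuration by more than a controlled amount) — or in distinct runs, where $|\delta_m-\delta_j|$ is governed by $\eta$ times a run size while $d_\T(y_j,y_m)$ is at least comparably large, so such pairs are of lower order. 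One should also dispose separately of the easy regime in which $\x$ is so concentrated that $\Ha(\x)$ already exceeds $\Ha(\y)$ and the inequality is trivial.

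\emph{Main obstacle.}
The delicate step is precisely this last energy estimate: a crude pair-by-pair control of the change of $\log|\e^{\ic x_j}-\e^{\ic x_k}|$ only yields a loss of order $N^{2}$, and bringing it down to the linear scale $N$ requires using the convexity of $\Ha$ along the transport path and the rigid $\eta$-spaced block structure of the regularized configuration (handling the heavily concentrated case apart); by contrast the separation and transport-cost bounds are routine.
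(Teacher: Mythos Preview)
Your separation and transport bounds are fine, but the energy estimate is not proved --- you correctly identify it as the main obstacle and then leave it as an outline. The difficulty is real: your greedy push $y_{j+1}=\max(x_{j+1},y_j+\eta)$ resets $y$ to $x$ whenever possible, so across run boundaries the arc--gap $y_{m}-y_{j}$ can be \emph{smaller} than $x_{m}-x_{j}$ (whenever $\delta_m<\delta_j$), and the corresponding contributions to your linear term $\tfrac12\sum_{j<m}(\delta_m-\delta_j)\cot((y_m-y_j)/2)$ are negative. A single boundary pair $(k,k+1)$ with $\delta_k\approx(k-1)\eta$ and $y_{k+1}-y_k\approx\eta$ already contributes about $-(k-1)$; so getting the total above $-N/2$ genuinely requires a cancellation with the positive within--run terms, and your sketch does not carry this out.

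The paper sidesteps this entirely by a change of variables. After cutting at a largest gap one maps the ordered arc to $\R$ via $\tilde x=\tan(x/2)$, and does the greedy push \emph{on $\R$} with the gap rule
\[
\tilde y_1=\tilde x_1,\qquad \tilde y_{j+1}=\tilde y_j+\max(\tilde x_{j+1}-\tilde x_j,\,N^{-2}).
\]
This construction (unlike yours) never decreases a consecutive gap, hence $\tilde y_m-\tilde y_j\ge \tilde x_m-\tilde x_j$ for all $j<m$, and the Euclidean energy comparison $\sum_{j\neq k}\log\frac1{|\tilde x_j-\tilde x_k|}\ge\sum_{j\neq k}\log\frac1{|\tilde y_j-\tilde y_k|}$ is immediate. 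The chord metric and the Euclidean metric are linked by
\[
|\e^{\ic x}-\e^{\ic y}|=\frac{2|\tilde x-\tilde y|}{\sqrt{1+\tilde x^2}\sqrt{1+\tilde y^2}},
\]
so the only price is the mismatch $\sum_j\log\frac{1+\tilde x_j^2}{1+\tilde y_j^2}$, which is bounded by a routine one--variable estimate using $0\le\tilde y_j-\tilde x_j\le(j-1)N^{-2}$ and $|\tilde x_j|\le N$; this is exactly where the correction $N$ in the lemma comes from. In short, the paper trades your delicate convexity/cancellation argument on the circle for a trivial monotonicity on $\R$ plus an elementary distortion bound.
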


\begin{proof} Given any ordered configuration $x_1<\ldots<x_N$ in  $\T$, there exists at least one index $j$ such that $x_{j+1}-x_j\geq 2\pi/N$. Thus,  by permutation and translation, one can assume without loss of generality that
$$
-\pi+\frac2N\leq x_1<\ldots<x_N<\pi-\frac2N.
$$ 

Consider the increasing bijection $x\in \T \mapsto  \tilde x:=\tan(x/2)\in\R\cup\{\pm\infty\}$
which satisfies
\eq
\label{metric}
|\e^{\ic x}-\e^{\ic y}|=\frac{2|\tilde x-\tilde y|}{\sqrt{1+\tilde x^2}{\sqrt{1+\tilde y^2}}}.
\qe
We set  $\tilde y_1 := \tilde x_1$ and 
$
\tilde y_{j+1}:= \tilde y_j+\max(\tilde x_{j+1}-\tilde x_j, N^{-2})
$
and then let $y_1<\ldots<y_N\in \T$ be the configuration obtained by taking the image of the $\tilde y_j$'s by the inverse bijection. Since by construction  $\tilde y_j-\tilde x_j\leq (j-1)N^{-2}$ we have 
$$
\sum_{j=1}^N |\e^{\ic x_j} - \e^{\ic y_j}| \leq 2N^{-2}\sum_{j=1}^N(j-1)\leq 1.
$$

Next, by assumption on the $x_j$'s we have
$
\max_{j}|\tilde x_j|\leq |\tan(\tfrac\pi2-\tfrac1N)|\leq N,
$
which yields
$
\max_j|\tilde y_j|\leq  2N,
$
and we  thus obtain, for every $j\neq k$,
$$
|\e^{\ic y_j}-\e^{\ic y_k}| \geq \frac{2}{N^2(1+4N^2)}\geq \frac2{5N^4}.
$$

Finally, we have 
\begin{align*}
\sum_{j\neq k}\log\frac2{|\e^{\ic x_j}-\e^{\ic x_k}|} & =\sum_{j\neq k}\log\frac1{|\tilde x_j-\tilde x_k|}+(N-1)\sum_{j=1}^N\log(1+\tilde x_j^2) \\
& \geq \sum_{j\neq k}\log\frac1{|\tilde y_j-\tilde y_k|}+(N-1)\sum_{j=1}^N\log(1+\tilde x_j^2)\\
& = \sum_{j\neq k}\log\frac2{|\e^{\ic y_j}-\e^{\ic y_k}|}+(N-1)\sum_{j=1}^N\log\Big(\frac{1+\tilde x_j^2}{1+\tilde y_j^2}\Big)\\
& \geq \sum_{j\neq k}\log\frac2{|\e^{\ic y_j}-\e^{\ic y_k}|}-(N-1)\sum_{j=1}^N\log\Big(\frac{1+(\tilde x_j+N^{-2} (j-1))^2}{1+\tilde x_j^2}\Big).
\end{align*}
Using that, for any $0<c<1$,
$$
\max_{x\in\R}\log\left(\frac{1+(x+c)^2}{1+x^2}\right)= \log\left(1+\frac{2c}{\sqrt{c^2+4}-c}\right)\leq 2 c,
$$
we obtain
$$
(N-1)\sum_{j=1}^N\log\left(\frac{1+(x_j+N^{-2}(j-1))^2}{1+x_j^2}\right)\leq  N 
$$
which completes the proof of the lemma.

\end{proof}

\begin{proof}[Proof of Theorem~\ref{thm:concentration}]
Recalling \eqref{def:law}, if we set for convenience
\eq
\label{g}
g(x):=\log\left|\sin\left(\frac x2\right)\right|^{-1}
\qe
then we can write
\begin{equation} \label{meas}
\d\P_N(x_1,\ldots,x_N)=\frac1{Z_N'}\exp\left\{
-\frac\beta N\sum_{j\neq k} g(x_j-x_k)-\sum_{j=1}^N V(x_j)
\right\} \prod_{j=1}^N\frac{\d x_j}{2\pi}
\end{equation}
for some new normalization constant $Z_N'>0$.\\

 \textit{Step 1: Lower bound on the partition function.} 
By writing 
$$
Z_N'=\int\exp\Big\{-\frac\beta N\sum_{j\neq k} g(x_j-x_k)-\sum_{j=1}^N\big(V(x_j)+\log(2\pi\mub(x_j))\big)\Big\} 
    \prod_{j=1}^N\mub(\d x_j)
$$
  and using Jensen's inequality, we obtain
    \begin{align}
\log Z_N'  & \geq \int\left( -\frac{\be}{N}  \sum_{j\neq  k} g(x_j-x_k)-\sum_{j=1}^N\big(V(x_j)+\log(2\pi\mub(x_j))\big)\right)\prod_{j=1}^N\mub(\d x_j)\nonumber\\
 & = -\beta(N-1)\cE(\mub)- N\int (V+\log(2\pi\mub))\,\d\mub\nonumber\\
    \label{partifunc}
 &=  -N F_\beta(\mub)+\beta \cE(\mub).
  \end{align}

\textit{Step 2: Regularization and energy estimates.}  Given any configuration $x_1,\ldots,x_N\in \T$ of distinct points, let $y_1,\ldots,y_N\in\T$ be as in Lemma~\ref{le:spacing} and set $$\widetilde\mu_N:=
\frac1N\sum_{i=1}^n\delta_{y_i}*\lambda_{N^{-5}}\,,\qquad \lambda_\epsilon:=\bs 1_{[0,\epsilon]}(x)\,\frac{\d x}\epsilon.
$$
Since $g'(x) =-(2\tan(x/2))^{-1}$, a Taylor-Lagrange expansion
yields for any $|u|\leq |x|/2$,
\eq
\label{Taylorg}
|g(x+u)-g(x)|  \leq  \frac{|u|}{2 \sin|x/4|}
\leq  \frac{|u|}{\sin|x/2|}.
\qe
Since Lemma~\ref{le:spacing} yields  $\sin(|y_j-y_k|/2)\geq N^{-4}/10$ and $\sin(|y_k-y_j-u|/2)\geq N^{-4}/10-|u|$ when $j\neq k$, we obtain from  Lemma~\ref{le:spacing} again and \eqref{Taylorg} that, for $N\geq 10$,
\begin{align}
\sum_{ j\neq k} g(x_j-x_k)& 
\geq    
\sum_{ j\neq k} g(y_j-y_k)-N \nonumber\\
& \geq  \sum_{j\neq k} \int g(y_j-y_k+u)\, \lambda_{N^{-5}}(\d u ) - 6N \nonumber\\
& \geq  \sum_{j\neq k} \iint g(y_j-y_k+u-v)\, \lambda_{N^{-5}}(\d u )\lambda_{N^{-5}}(\d v) - 16N \nonumber\\
& =  N^2 \mathcal E(\widetilde\mu_N)- N\mathcal E(\lambda_{N^{-5}})-  16N .
\end{align}
Next, by using that $2|\sin(a)|\geq |a|$ when $|a|\leq 1$, we obtain the upper bound
\eq
\label{Eestim}
  \cE(\lambda_\epsilon) \leq \int \log\frac1{|x-y|}\,\lambda_\epsilon(\d x)\lambda_\epsilon(\d y)+\log 2=-\log\epsilon+3/2+\log2.
\qe

  If we set $c:=\cE(\mub)+ 16+3/2+\log2$, then by combining \eqref{partifunc}--\eqref{Eestim} we obtain,
  \begin{align*}
    \label{ineqconc}
  &\frac1{Z_N'}\exp\left\{
-\frac\beta N\sum_{j\neq k} g(x_j-x_k)-\sum_{j=1}^nV(x_j)
\right\} \\
& \leq  \e^{ \beta (5\log N+c)} \e^{- N\big(\beta\cE(\widetilde\mu_N)-F_\beta(\mub)\big)-\sum_{j=1}^NV(x_j) }\\
& =   \e^{ \beta (5\log N+c)} \e^{- N\big(F_\beta(\widetilde\mu_N)-F_\beta(\mub)-\H(\widetilde\mu_N|\mub)\big)+N\int Q\,\d(\widetilde \mu_N-\muh) }\prod_{j=1}^N2\pi\mub(x_j)
\end{align*}
  where $Q:= V+\log\mub=C^V_\beta-2\beta U^{\mub}$ by \eqref{EL}.     Using Corollary~\ref{corcool}, we deduce from \eqref{meas} that  for any $r>0$,
\eq
\label{eqA}
  \P_N\big(\cE(\widetilde\mu_N-\mub)> r\big)   \leq    \,\e^{-\beta (Nr- 5\log N- c)}\int\e^{N\int Q\,\d(\widetilde \mu_N-\muh)  }(\mub)^{\otimes N}(\d x).
\qe
Finally,   since by assumption $V\in \Hi^{1}(\T)$, Proposition~\ref{prop:regmub} yields $U^{\mub}$ is Lipschitz and,  using again Lemma~\ref{le:spacing}, we have
\begin{align*}
\left|N\int Q\,\d(\widetilde \mu_N-\muh)\right| & \leq 2\beta \sum_{j=1}^N \int \big| U^{\mub}(y_j+u)-U^{\mub}(x_j)\big| \,\lambda_{N^{-5}}(\d u)\\
& \leq 2\beta\|U^{\mub}\|_{\mathrm{Lip}} \left(\sum_{j=1}^N|\e^{\ic x_j}-\e^{\ic y_j}|+2N\int |\sin(u/2)|\,\lambda_{N^{-5}}(\d u)\right)\\
& \leq 3\beta\|U^{\mub}\|_{\mathrm{Lip}}.
\end{align*}


Together with \eqref{eqA}, we have finally obtained the energy estimate
\eq
\label{eqB}
  \P_N\big(\cE(\widetilde\mu_N-\mub)> r\big)   \leq    \e^{-\beta( Nr- 5\log N-\tilde C)},
\qe
where $ \tilde C:= \mathcal E(\mub)+3\|U^{\mub}\|_{\mathrm{Lip}}+16+3/2+\log2$.

\emph{Step 4: The Coulomb transport inequality and conclusion.}  Lemma~\ref{le:spacing} yields,
\begin{align}
\label{W1est}
\mathrm W_1(\muh,\widetilde\mu_N) & \leq \frac1N\sum_{j=1}^N\int |\e^{\ic x_{j}}-\e^{\ic (y_j+u)}|\lambda_{N^{-5}}(\d u)\nonumber\\
& \leq \frac1N+2\int  |\sin(u/2)|\,\lambda_{N^{-5}}(\d u)\leq \frac2N.
\end{align}
Since both $\widetilde\mu_N$ and $\mub$ have finite logarithmic energy, it follows  from \cite[Theorem 1.1]{CHM18} and the discussion below that, for every $\epsilon>0$,
$$
 \mathrm W_1(\widetilde\mu_N,\mub)^2\leq 4\pi\,\cE(\widetilde\mu_N-\mub).
$$
Moreover, using that
$$
\frac12\mathrm W_1(\muh,\mub)^2\leq \mathrm W_1(\widetilde\mu_N,\mub)^2+\mathrm W_1(\muh,\widetilde\mu_N)^2\leq  \mathrm W_1(\widetilde\mu_N,\mub)^2+\frac 4{N^2},
$$
we obtain  for any $r>0$  from \eqref{eqB},
\begin{align*}
\P_N\Big(\mathrm W_1(\muh,\mub)> r\Big) & \leq \P_N\Big(\mathrm W_1(\widetilde\mu_N,\mub)^2> \frac{r^2}2-\frac 4{N^2}\Big)\\
& \leq\P_N\left( \cE(\widetilde\mu_N-\mub)> \frac1{C_{\T}}\Big(\frac{r^2}2-\frac 4{N^2}\Big) \right)\\
& \leq   \e^{-\beta (\frac{1}{8\pi} N r^2 -5\log N-C)}
\end{align*}
where the constant is given by
\eq
\label{ConcCte}
C  := \mathcal E(\mub)+3\|U^{\mub}\|_{\mathrm{Lip}}+16+\frac32+\log 2+\frac1\pi
\qe
and the proof of the theorem is complete.
\end{proof}


\section{Main steps for the proof of Theorem~\ref{thm:clt} }
\label{sect:Stein}

In this section, we explain the main strategy to prove Theorem~\ref{thm:clt}. It is based on  the multi-dimensional Gaussian approximation result from  \citep*{LLW17} combined with the previous concentration inequality and a study of the spectral properties of the operator~$\mathscr L$.

Consider the differential operator  given by 
\[ \begin{aligned}
\L& := \Delta - \nabla\big(\tfrac{2\beta}{N}\Ha(\x) + \textstyle{\sum_{j=1}^N}V(x_j) \big) \cdot \nabla\\
& =  \sum_{j=1}^N\partial_{x_j}^2 + \frac{2 \beta}{N}  \sum_{ i\neq j} \frac{ \partial_{x_j}}{2\tan\left(\frac{x_j-x_i}{2}\right)} - \sum_{j=1}^N
V'(x_j) \partial_{x_j},
\end{aligned}\]
which satisfies the integration by part identity $\int f(-\L g)\,\d\P_N=\int \nabla f\cdot \nabla g\,\d\P_N$  for any smooth functions $f,g : \T^N \to\R$.

Recalling that $\nu_N = \sqrt{N} (\hat{\mu}_N-\mub)$, we first show that $\nu_N(\phi)$, seen as a mapping $\T^N\to\R$, is an \emph{approximate eigenfunction} for $\L$ as long as $\phi$ is a (strong) eigenfunction of the differential operator  $\mathscr L$ defined in \eqref{Lop}. More precisely, we have the approximate commutation relation:

\begin{lemma} 
\label{keyLemma}
For any $\phi\in\Co^2(\T)$ we have
\begin{equation} \label{eig1}
\L \,\nu_N (\phi)= - \,  \nu_N (\mathscr L\phi) + \frac{\beta}{\sqrt{N}} \zeta_N(\phi) 
\end{equation}
where we introduced 
 \begin{equation} \label{error}
 \zeta_N(\phi)  :=  \iint \frac{\phi'(x)- \phi'(y)}{2\tan(\tfrac{x-y}{2})} \nu_N(\d x) \nu_N(\d y) 
-\int \phi'' \d{\mu}_N.
 \end{equation}
\end{lemma}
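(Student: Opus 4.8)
The plan is to turn \eqref{eig1} into an identity between empirical sums and a few integrals against $\mub$, and to check that identity using the differentiated Euler--Lagrange equation \eqref{mub'} (equivalently Proposition~\ref{propEqM}(c) together with Lemma~\ref{le:derU}) and the anti-self-adjointness of the Hilbert transform $H$. The starting point is that $\nu_N(\phi)=\frac1{\sqrt N}\sum_{i=1}^N\phi(x_i)-\sqrt N\int\phi\,\d\mub$, and since $\L$ is a differential operator it annihilates the constant second term, so $\L\nu_N(\phi)=\frac1{\sqrt N}\L\big(\sum_i\phi(x_i)\big)$. Inserting $\sum_i\phi(x_i)$ into the explicit expression for $\L$ and using $\partial_{x_j}\phi(x_i)=\delta_{ij}\phi'(x_i)$, I would get
\[
\L\nu_N(\phi)=\frac1{\sqrt N}\sum_{i}\phi''(x_i)+\frac{2\beta}{N\sqrt N}\sum_{i\neq j}\frac{\phi'(x_i)}{2\tan(\frac{x_i-x_j}{2})}-\frac1{\sqrt N}\sum_i V'(x_i)\phi'(x_i).
\]

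Next I would expand $\zeta_N(\phi)$. Introduce $f(x,y):=\frac{\phi'(x)-\phi'(y)}{2\tan(\frac{x-y}{2})}$, which extends continuously to the diagonal with $f(x,x)=\phi''(x)$, and substitute $\nu_N=\sqrt N(\muh-\mub)$ into \eqref{error}. The diagonal contribution of the resulting double empirical integral is exactly $\frac1N\sum_i\phi''(x_i)$, which is cancelled by the subtracted term $-\int\phi''\,\d\muh$; what remains is
\[
\zeta_N(\phi)=\frac1N\sum_{i\neq j}f(x_i,x_j)-2\sum_i\int f(x_i,y)\,\d\mub(y)+N\iint f(x,y)\,\d\mub(x)\,\d\mub(y).
\]
Because $f$ is symmetric and $\tan$ is odd, an index swap shows $\sum_{i\neq j}f(x_i,x_j)=2\sum_{i\neq j}\frac{\phi'(x_i)}{2\tan(\frac{x_i-x_j}{2})}$, so the first term of $\frac{\beta}{\sqrt N}\zeta_N(\phi)$ coincides exactly with the interaction term of $\L\nu_N(\phi)$; it then remains only to locate the other two terms of $\frac{\beta}{\sqrt N}\zeta_N(\phi)$ inside $-\nu_N(\mathscr L\phi)$.

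For this I would first prove the pointwise identity $\int f(x,y)\,\d\mub(y)=\pi\big(H(\mub\phi')(x)-\phi'(x)H\mub(x)\big)$, which comes from splitting $f(x,\cdot)$ into its two summands and applying the definition \eqref{def:H0} of the principal value Hilbert transform (the splitting being legitimate since $\phi'(x)-\phi'(\cdot)$ vanishes at $y=x$). Combining this with $(\log\mub)'=-V'-2\pi\beta H\mub$, that is \eqref{mub'}, the operator \eqref{Lop} can be rewritten as $(\mathscr L\phi)(x)=-\phi''(x)+V'(x)\phi'(x)-2\beta\int f(x,y)\,\d\mub(y)$, so that $-\frac1{\sqrt N}\sum_i(\mathscr L\phi)(x_i)$ reproduces the $\phi''$, $V'\phi'$ and $2\beta\int f(x_i,\cdot)\,\d\mub$ sums above. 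It only remains to compute $\int(\mathscr L\phi)\,\d\mub$: integrating the last expression against $\mub$, using \eqref{mub'} once more for the $-\phi''+V'\phi'$ part and $\int H(\mub\phi')\,\mub\,\d x=-\int\phi'\,(H\mub)\,\d\mub$ for the rest, one gets $\int(\mathscr L\phi)\,\d\mub=-\beta\iint f\,\d\mub\,\d\mub$, so $\sqrt N\int(\mathscr L\phi)\,\d\mub$ furnishes the last term of $\frac{\beta}{\sqrt N}\zeta_N(\phi)$. Putting $-\nu_N(\mathscr L\phi)=-\frac1{\sqrt N}\sum_i(\mathscr L\phi)(x_i)+\sqrt N\int(\mathscr L\phi)\,\d\mub$ together with the previous displays then gives \eqref{eig1} after a term-by-term comparison.

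The whole computation is elementary; the only point needing care is the bookkeeping around the singular kernel $1/\tan$ — justifying the split of $f$ into two principal-value integrals and checking that the diagonal term is precisely $-\int\phi''\,\d\muh$ — but I do not expect any real difficulty, since the analytic facts involved (boundedness of $H$ on $L^2$, $H^*=-H$, and $U^{\mub}\in\Hi^1$ so that \eqref{mub'} is available) all come from Proposition~\ref{propEqM} and Lemma~\ref{le:derU}.
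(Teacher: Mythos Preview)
Your proposal is correct and follows essentially the same route as the paper: both compute $\L\big(\sum_i\phi(x_i)\big)$, symmetrize the interaction term, identify $\int f(x,\cdot)\,\d\mub=\pi\big(H(\mub\phi')-\phi' H\mub\big)$ (the paper packages this as an operator $\Xi$), and use the differentiated Euler--Lagrange equation together with $H^*=-H$ to match the remaining pieces with $-\nu_N(\mathscr L\phi)$. The only cosmetic difference is that the paper argues forward from $\L\Phi$ while you expand all three terms of \eqref{eig1} separately and compare; the analytic inputs are identical.
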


\begin{proof} 
If we set
$\Phi(\x) := \sum_{j=1}^N \phi(x_j)
$
then  we have
\begin{align}
\label{Lfirst} \L \Phi (\x)
&=   \sum_{j=1}^N \phi''(x_j) +\frac{\beta}{N} \sum_{i\neq j}^N  \frac{\phi'(x_j)}{\tan(\tfrac{x_j-x_i}{2})} -  \sum_{j=1}^N \phi'(x_j) V'(x_j)\nonumber\\
& =  \left(1- \frac{\beta}{N} \right)\sum_{j=1}^N  \phi''(x_j) + \frac{\beta}{N} \sum_{i, j=1}^N  \frac{\phi'(x_j)-\phi'(x_i)}{2\tan(\tfrac{x_j-x_i}{2})} - \sum_{j=1}^N \phi'(x_j) V'(x_j).
\end{align}
Next, it is convenient  to introduce the operator  $\Xi$ defined by
\begin{equation} 
\label{def:Xi}
\Xi\psi (x) := \int  \frac{\psi(x)- \psi(t)}{2\tan(\tfrac{x-t}{2})} \mub(\d t),
\end{equation}
which is a weighted version of the Hilbert transform $H$ defined in \eqref{def:H0}. Indeed, we can write
\begin{align*}
& \frac{1}{N} \sum_{i, j=1}^N  \frac{\phi'(x_j)-\phi'(x_i)}{2\tan(\tfrac{x_j-x_i}{2})}
\\
& \qquad = 2 \sqrt{N} \int \Xi(\phi')  \,\d\nu_N + N \int \Xi(\phi')\, \d\mub
+  \iint \frac{\phi'(x)- \phi'(y)}{2\tan(\tfrac{x-y}{2})} \nu_N(\d x) \nu_N(\d y)
\end{align*}
and this yields together with \eqref{Lfirst} and \eqref{error},
\eq
\label{L1}
\L \Phi=  
 N \int  \left( \phi'' +\beta\, \Xi(\phi') -\phi' V' \right)  \d\mub 
+ \sqrt{N} \int  \left(  \phi'' +2\beta\, \Xi(\phi') - \phi' V' \right)  \d\nu_N + \beta\zeta_N(\phi).
\qe

By \eqref{Lop}, observe that the variational equation \eqref{mub'}  yields 
\eq
\label{LvsXi}
\phi'' + 2\beta\, \Xi(\phi')-V'\phi'=-\mathscr L\phi
\qe
where we used that, by \eqref{def:Xi},
\begin{equation} \label{eq:Xi2}
\Xi\psi = \pi\big(\U(\psi \mub)- \psi \U(\mub)\big) . 
\end{equation}
Moreover, we obtain by using that $H^*=-H$,
\begin{align*}
\int \Xi\psi \,\d\mub  & = -  \pi\int  \U(\mub) \psi \,\d\mub +  \pi \langle \U(\mub\psi),\mub \rangle_{L^2}\\
& =   - 2\pi \int \U(\mub)\psi \,\d\mub.
\end{align*}
By  integrating \eqref{mub'} against $\phi' \,\d x$, this yields together with an integration by parts:
\eq
\label{L2}
\int  \left( \phi'' +\beta \,\Xi(\phi') -\phi' V' \right)  \d\mub =0.
\qe
By combining \eqref{L1}--\eqref{L2}, we have finally shown that
$$
\L \Phi=  
-\sqrt{N} \nu_N(\mathscr L\phi) + \beta\zeta_N(\phi)
$$ 
and the result follows by linearity of $\L$ since $\Phi  = \sqrt{N} \nu_N (\phi) +  N \int \phi \,\d\mub$.
\end{proof}

It turns out the random variables $\zeta_N(\phi)$ are of smaller order of magnitude than the fluctuations provided $\phi$ is smooth enough. More precisely, we have the following estimates.

\begin{lemma} 
\label{leConc}
There exists a constant $C=C(\beta,V)>0$ such that,
for any $N\geq 10$,  for any $\phi\in\Co^{3,1}(\T)$, we have
\eq
\label{leConcB}
\E\Big[\big|\zeta_N(\phi)\big|^2\Big]\leq C \|\phi'''\|_{\mathrm{Lip}}^2 (\log N )^2.
\qe
Moreover, for any Lipschitz function $g:\T\to\R$ 
\eq
\label{leConcA}
\E\left[\left|\int g\,\d\nu_N\right|^2\right]\leq C \|g\|_{\mathrm{Lip}}^2\log N.
\qe
\end{lemma}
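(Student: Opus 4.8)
The plan is to bound both quadratic functionals by combining the concentration inequality from Theorem~\ref{thm:concentration} with the Kantorovich--Rubinstein dual formula \eqref{defWass}. Start with \eqref{leConcA}, which is the simpler of the two and also feeds into the proof of \eqref{leConcB}. Write $\int g\,\d\nu_N = \sqrt N\int g\,\d(\muh-\mub)$; by the dual representation of $\mathrm W_1$, $|\int g\,\d(\muh-\mub)| \le \|g\|_{\mathrm{Lip}}\,\mathrm W_1(\muh,\mub)$, hence $|\int g\,\d\nu_N|^2 \le N\|g\|_{\mathrm{Lip}}^2\,\mathrm W_1(\muh,\mub)^2$. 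Taking expectations and using the layer-cake formula $\E[X] = \int_0^\infty \P(X>t)\,\d t$ with the tail bound $\P_N(\mathrm W_1(\muh,\mub)>r) \le \e^{-\beta(\frac1{8\pi}Nr^2 - 5\log N - C)}$ from Theorem~\ref{thm:concentration}, one gets $N\,\E[\mathrm W_1(\muh,\mub)^2] \le N\int_0^\infty \P_N(\mathrm W_1(\muh,\mub)^2 > s)\,\d s$; splitting the integral at $s_0 := \frac{8\pi}{\beta N}(5\log N + C)$ (below which the tail bound is trivial) and evaluating the Gaussian-type integral above $s_0$ yields a bound of the form $C'\log N$, uniformly in $N\ge 10$. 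This gives \eqref{leConcA}.

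For \eqref{leConcB}, decompose $\zeta_N(\phi) = A_N - B_N$ where $A_N := \iint \frac{\phi'(x)-\phi'(y)}{2\tan((x-y)/2)}\,\nu_N(\d x)\nu_N(\d y)$ and $B_N := \int \phi''\,\d\muh$. The term $B_N$ is harmless: $|B_N| \le \|\phi''\|_{L^\infty} \le \|\phi'''\|_{\mathrm{Lip}}$ up to a universal constant (using that $\T$ is compact and $\phi''$ is $\Co^{1,1}$, say after centering), so $\E[|B_N|^2]$ contributes $O(\|\phi'''\|_{\mathrm{Lip}}^2)$, which is absorbed. The real work is the double integral $A_N$. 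The kernel $K(x,y) := \frac{\phi'(x)-\phi'(y)}{2\tan((x-y)/2)}$ has a removable singularity on the diagonal since $\phi'$ is differentiable; more precisely, because $\phi' \in \Co^{2,1}$, the function $K$ is itself Lipschitz in each variable with a Lipschitz constant controlled by $\|\phi'''\|_{\mathrm{Lip}}$ (and lower-order norms of $\phi$, all dominated on $\T$ by $\|\phi'''\|_{\mathrm{Lip}}$ after subtracting constants). One then writes $A_N = N\iint K\,\d(\muh-\mub)\otimes\d(\muh-\mub)$ and estimates this by freezing one variable: $|\iint K\,\d(\muh-\mub)(\d x)\,\d(\muh-\mub)(\d y)| \le \sup_x \|K(x,\cdot)\|_{\mathrm{Lip}}\cdot \mathrm W_1(\muh,\mub)\cdot \mathrm W_1(\muh,\mub)$, i.e.\ $|A_N| \le C N \|\phi'''\|_{\mathrm{Lip}}\,\mathrm W_1(\muh,\mub)^2$. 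Hence $\E[|A_N|^2] \le C N^2 \|\phi'''\|_{\mathrm{Lip}}^2\,\E[\mathrm W_1(\muh,\mub)^4]$, and a fourth-moment version of the layer-cake computation above, using the same Gaussian tail, gives $N^2\E[\mathrm W_1(\muh,\mub)^4] \le C(\log N)^2$. Combining the two pieces yields \eqref{leConcB}.

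The main obstacle is the kernel regularity bookkeeping for $A_N$: one must show carefully that $y \mapsto \frac{\phi'(x)-\phi'(y)}{2\tan((x-y)/2)}$ is Lipschitz uniformly in $x$, with constant $\lesssim \|\phi'''\|_{\mathrm{Lip}}$, despite the apparent singularity of $1/\tan$ at $x=y$ and at $x-y=\pm\pi$ (the latter being genuinely outside the diagonal but where $\tan$ blows up — here one uses that $\phi'(x)-\phi'(y)$ does not vanish, so one should instead bound $K$ directly via $|K(x,y)| \le \tfrac12\|\phi''\|_{L^\infty} \cdot |x-y|/|2\tan((x-y)/2)|$ near the diagonal and by $\tfrac12\|\phi''\|_{L^\infty}/|\tan((x-y)/2)|$... wait, that blows up at $\pi$). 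The clean fix is to use the representation $K(x,y) = \int_0^1 \phi''(y + t(x-y))\,\d t \cdot \frac{x-y}{2\tan((x-y)/2)}$ valid for $|x-y|<\pi$ and periodicity to cover all of $\T\times\T$; both factors are then manifestly smooth, and differentiating in $y$ produces terms involving $\phi'''$ and the bounded smooth function $u\mapsto u/(2\tan(u/2))$ and its derivative. Once this Lipschitz estimate is in hand, the probabilistic part is a routine integration of the tail bound, and the uniformity in $N\ge 10$ is automatic since $\log N$ dominates the constant terms in the exponent.
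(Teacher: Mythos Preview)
Your approach to \eqref{leConcA} is correct and matches the paper's: convert the concentration bound from Theorem~\ref{thm:concentration} into a second-moment estimate via the layer-cake formula.

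For \eqref{leConcB}, your strategy (bound $|A_N|$ pointwise by $CN\|\phi'''\|_{\mathrm{Lip}}\,\mathrm W_1(\muh,\mub)^2$, then control $N^2\E[\mathrm W_1^4]$ by $(\log N)^2$) is viable and close in spirit to the paper, but there is a genuine gap. The displayed inequality
\[
\Big|\iint K\,\d(\muh-\mub)\otimes\d(\muh-\mub)\Big| \le \sup_x \|K(x,\cdot)\|_{\mathrm{Lip}}\cdot \mathrm W_1(\muh,\mub)^2
\]
is \emph{false} in general: freezing $x$ and applying Kantorovich--Rubinstein gives $|g(x)|\le\|K(x,\cdot)\|_{\mathrm{Lip}}\,\mathrm W_1$ with $g(x)=\int K(x,y)\,\d(\muh-\mub)(y)$, but to integrate $g$ against $\d(\muh-\mub)$ you need $\|g\|_{\mathrm{Lip}}$, not $\|g\|_\infty$. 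Bounding $\|g\|_{\mathrm{Lip}}$ by $C\,\mathrm W_1$ requires that $y\mapsto K(x,y)-K(x',y)$ be Lipschitz with constant $\lesssim |\e^{\ic x}-\e^{\ic x'}|$, i.e.\ a \emph{mixed} Lipschitz bound on $K$ (morally, $\partial_x\partial_y K\in L^\infty$). Your discussion only addresses Lipschitz regularity in each variable separately, which does not suffice. The mixed bound \emph{does} hold here --- with $\phi\in\Co^{3,1}$ one has $\phi^{(4)}\in L^\infty$ and the integral representation you wrote makes $\partial_x\partial_y K$ bounded by $C\|\phi'''\|_{\mathrm{Lip}}$ --- but this is precisely the ``kernel regularity bookkeeping'' you flagged, and it is not the bookkeeping you actually did.

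The paper handles this slightly differently: rather than bounding moments of $\mathrm W_1$, it works at the tail level. It sets $\Psi_N(x)=\int K(x,y)\,\nu_N(\d y)$, differentiates in $x$, and shows the resulting integrand has $y$-derivative uniformly bounded by $C\|\phi'''\|_{\mathrm{Lip}}$ (this is exactly the mixed-regularity computation, done explicitly via a Taylor expansion). Then $\|\Psi_N\|_{\mathrm{Lip}}\le C\|\phi'''\|_{\mathrm{Lip}}\sqrt{R\log N}$ with high probability by one application of the concentration inequality, and a second application gives the tail of $\int\Psi_N\,\d\nu_N$. Your fourth-moment route would also work once the mixed bound is supplied; the paper's two-step tail argument is just a different packaging of the same regularity input. (Incidentally, your worry about $x-y=\pm\pi$ is misplaced: $1/\tan$ vanishes there, it does not blow up.)
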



The proof of this lemma is based on Theorem~\ref{thm:concentration}  and is postponed to Section~\ref{sect:est}.

\medskip

Another important input is the existence of an eigenbasis of  $\Hi$ for the operator $\mathscr L$   that behaves like an eigenbasis of a Sturm-Liouville operator.
Note that by \eqref{Sspace},  $\Hi$ is a separable Hilbert space and  it follows from Proposition~\ref{propEqM}(b)  that the associated norm satisfies  
$$\delta\|\psi' \|_{L^2}^2\leq \|\psi\|_\Hi^2\leq \delta^{-1}\|\psi' \|_{L^2}^2,$$
a fact we will use at several instances below.

\begin{proposition} 
\label{BON}
Assume that $V\in \Co^{m,1}(\T)$ for some $m\geq 1$. Then
there exists a family $(\phi_j)_{j=1}^\infty$ of  functions  $\phi_j:\T\to\R$ such that:
\begin{itemize}
\item[\rm{(a)}]  $\mathscr L\phi_j=\varkappa_j\phi_j$ where $(\varkappa_j)_{j=1}^\infty$ is an increasing sequence of positive numbers. 
\item[\rm{(b)}] $(\phi_j)_{j=1}^\infty$ is an orthonormal basis of the Hilbert space $\Hi$. 
\item[\rm{(c)}]  There exists $\alpha>0$ such that $\varkappa_j \sim \alpha j^2$ as $j\to\infty$.
for every $j\geq 1$. 
 \item[\rm{(d)}] $\phi_j\in \Co^{m}(\T)$ and there exist constants $C_k$ such that for every $ k \in\{ 0,\dots, m\}$, 
\[
\| \phi_j^{(k)} \|_{\rm Lip} \le C_k \,\varkappa_j^{\frac{k+1}{2}} . 
\]
\end{itemize}
\end{proposition}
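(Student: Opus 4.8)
The operator $\mathscr L$ is, by \eqref{LvsXi}, a bounded perturbation of the Sturm--Liouville operator $\phi\mapsto -\phi''+V'\phi'$ by the pseudodifferential term $2\beta\,\Xi(\phi')=2\pi\beta\,\bigl(\U(\mub\phi')-\U(\mub)\phi'\bigr)$, which is a lower-order (order-one) operator. The natural strategy is therefore to realise $\mathscr L$ as a self-adjoint operator on $\Hi$ via a quadratic form, establish a compact resolvent, apply the spectral theorem to get (a) and (b), then control eigenvalue asymptotics via a comparison/min-max argument for (c), and finally bootstrap regularity of the eigenfunctions from the eigenvalue equation for (d).

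\textbf{Step 1: the quadratic form and self-adjointness on $\Hi$.} I would first show that for $\phi,\psi\in\Hi$ the bilinear form
\[
\mathscr B(\phi,\psi):=\int \phi'\,\psi'\,\d x + 2\pi\beta\int \U(\mub\phi')\,\psi'\,\d x + \int \bigl((\log\mub)'-V'\bigr)\phi'\,\psi'\,\d x
\]
(obtained by pairing $-\mathscr L\phi$ with $\psi$ against $\d x/2\pi$ and using $(U^{\mub})'=\pi\U\mub$, Lemma~\ref{le:derU}, together with \eqref{EL} to replace $V'+(\log\mub)'=-2\beta\pi\U\mub$) is symmetric, continuous on $\Hi$, and coercive: $\mathscr B(\phi,\phi)\ge c\|\phi\|_\Hi^2$. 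Symmetry of the $\beta$-term is exactly the computation giving $\int\Xi\psi\,\d\mub=-2\pi\int\U(\mub)\psi\,\d\mub$ from the proof of Lemma~\ref{keyLemma}, combined with $\Xi\phi'=\pi(\U(\mub\phi')-\U(\mub)\phi')$; positivity follows because $\int\phi'\U(\mub\phi')\,\d x\ge0$ for $\mub>0$ bounded below (this is where $\mub\ge\delta>0$, Proposition~\ref{propEqM}(b), is essential — indeed $\langle f,\U f\rangle$ controls the $\Hi^{1/2}$ seminorm of $\mub^{1/2}f$ up to commutator errors, but one can argue more directly that $\Xi$ is a positive operator weighted by $\mub$). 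Coercivity then gives a bounded self-adjoint inverse $\mathscr L^{-1}:\Hi\to\Hi$; since $\Hi\hookrightarrow\Co^{1/2}(\T)$ compactly (Rellich), $\mathscr L^{-1}$ is compact. The spectral theorem yields the orthonormal eigenbasis $(\phi_j)$ with eigenvalues $\varkappa_j=1/\lambda_j\uparrow\infty$, $\varkappa_j>0$, proving (a) and (b).

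\textbf{Step 2: Weyl-type asymptotics, part (c).} Here I would use min--max. Writing $\mathscr B(\phi,\phi)=\int(\phi')^2\,\d x/\delta_0 + (\text{lower order})$ is too crude; instead note $\mathscr B(\phi,\phi)=\int(\phi')^2\,\d\tilde\mu + O(\|\phi\|_{\Hi^{1/2}}^2)$ where $\d\tilde\mu$ is a smooth positive weight, and the $\Hi^{1/2}$-seminorm is dominated by $\epsilon\|\phi'\|_{L^2}^2+C_\epsilon\|\phi\|_{L^2}^2$ for every $\epsilon>0$ (interpolation), so $\mathscr B$ is a form-small perturbation of the weighted Sturm--Liouville form $\int(\phi')^2\,\d\tilde\mu$. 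Classical Sturm--Liouville theory (or the Courant--Fischer comparison) gives $\varkappa_j\sim\alpha j^2$ with $\alpha=\pi^2\bigl(\int_\T \tilde\mu^{-1/2}\,\bigr)^{-2}$ up to a normalisation depending on the chosen weight; in fact since the $\beta$-term is relatively form-compact, $\varkappa_j/j^2$ has the same limit as for the pure Sturm--Liouville operator. This step requires care with the $L^2(\d x)$-versus-$L^2(\d\mub)$ normalisations but is otherwise standard.

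\textbf{Step 3: elliptic regularity of eigenfunctions, part (d).} Rewrite $\varkappa_j\phi_j=-\mathscr L\phi_j$ as $\phi_j''=-\varkappa_j\phi_j-2\pi\beta\bigl(\mub\phi_j'\bigr)'\cdot(\text{correction})-\ldots$; more precisely, from \eqref{Lop}, $-\phi_j''=\varkappa_j^{-1}\text{-controlled terms}$... I would instead integrate once: $\phi_j'(x)$ satisfies a first-order linear ODE of the form $\bigl(1+2\pi\beta\mub(x)?\bigr)$... actually the cleanest route is to treat $-\phi_j''=\varkappa_j\phi_j - (\log\mub)'\phi_j' - 2\pi\beta\U(\mub\phi_j')$ as a fixed-point/bootstrap: start from $\phi_j\in\Hi\subset\Co^{1/2}$ with $\|\phi_j'\|_{L^2}\le\delta^{-1/2}$, so $\|\phi_j\|_{L^\infty}\le C$ and $\|\phi_j\|_{\Co^{1/2}}\le C$; then the right-hand side is in $L^2$ with norm $\le C\varkappa_j$, giving $\phi_j\in H^2$ with $\|\phi_j''\|_{L^2}\le C\varkappa_j$, hence $\|\phi_j'\|_{L^\infty}\le C\varkappa_j^{1/2}$ (using $\|\phi_j'\|_{L^\infty}^2\le 2\|\phi_j'\|_{L^2}\|\phi_j''\|_{L^2}$ and $\|\phi_j'\|_{L^2}\le C$) and $\|\phi_j'\|_{\rm Lip}\le\|\phi_j''\|_{L^\infty}\le C\varkappa_j$, i.e. the $k=0$ bound $\|\phi_j\|_{\rm Lip}\le C\varkappa_j^{1/2}$ and $k=1$ bound. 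Then differentiate the equation $m-1$ more times, each differentiation costing one factor of $\varkappa_j^{1/2}$ by the same interpolation inequality $\|f'\|_{L^\infty}^2\le 2\|f'\|_{L^2}\|f''\|_{L^2}$ combined with an a priori $L^2$ bound on the top derivative of size $C\varkappa_j^{(k+1)/2}$; the key point making the induction close is that $\U$ is bounded on every $H^s(\T)$ and $V\in\Co^{m,1}$, $\mub\in\Co^{m,1}$ (Proposition~\ref{prop:regmub}), so no derivative is lost from the coefficients. This yields $\phi_j\in\Co^m$ and $\|\phi_j^{(k)}\|_{\rm Lip}\le C_k\varkappa_j^{(k+1)/2}$ for $0\le k\le m$.

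\textbf{Main obstacle.} The delicate point is Step~1: establishing strict positivity (coercivity) of the form $\mathscr B$ on $\Hi$, because the term $\int\phi'\,\U(\mub\phi')\,\d x$ is not manifestly nonnegative once $\mub$ is a nonconstant weight — one must exploit $\mub\ge\delta>0$ and the fact that $\langle f,\U(\mub f)\rangle=\langle \mub^{1/2}f, \U(\mub^{1/2}f)\rangle + \langle f,[\,\mub^{1/2},\U\,]\mub^{1/2}f\rangle$ with the commutator being an order $-1$ (hence form-compact, lower-order) operator, so that positivity is preserved modulo a compact perturbation that does not affect the existence of a positive self-adjoint extension. A second, more bookkeeping-type obstacle is Step~2: pinning down the precise constant $\alpha$ and checking that the $\beta$-term genuinely does not contribute to the leading $j^2$-asymptotics, for which the relative form-compactness established in Step~1 is exactly what is needed.
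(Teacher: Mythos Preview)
Your overall architecture matches the paper's: realise $\mathscr L$ as a positive self-adjoint operator on $\Hi$ with compact resolvent (giving (a)--(b)), compare eigenvalues with a Sturm--Liouville operator via min--max (giving (c)), and bootstrap regularity from the eigenvalue equation (giving (d)). But what you flag as the ``main obstacle''---coercivity, specifically the sign of $\int \phi'\,\U(\mub\phi')\,\d x$---is a non-issue in the paper's setup, and your proposed commutator argument with $\mub^{1/2}$ is unnecessary. The paper works directly with the $\Hi$-inner product and the decomposition $\mathscr L=\mathscr A+2\pi\beta\,\mathscr W$ where $\mathscr A\phi=-(\phi'\mub)'/\mub$ and $\mathscr W\phi=-\U(\phi'\mub)$. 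With $\varphi:=\phi'\mub$ one computes in one line
\[
\langle\mathscr W\phi,\phi\rangle_\Hi=-\int \U(\varphi)'\,\varphi\,\d x=\sum_{k\in\Z}|k|\,|\hat\varphi_k|^2=\|\varphi\|_{\Hi^{1/2}}^2\ge 0,
\]
and similarly $\langle\mathscr A\phi,\phi\rangle_\Hi=\int|\varphi'|^2\,\d x/\mub\ge 0$. So both pieces are \emph{manifestly} nonnegative on $\Hi$; the weighting by $\mub$ is absorbed into $\varphi$ and never causes trouble. Your difficulty came from pairing against $\d x$ rather than $\d\mub$, which produces the wrong quadratic form.

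For (c), the paper is also more direct than your relative-form-compactness route: from $\mathscr W\ge 0$ one gets $\varkappa_j\ge\lambda_j$ immediately, and for the upper bound one bounds $\langle\mathscr W\varphi_\ell,\varphi_\ell\rangle_\Hi\le C\lambda_\ell^{1/2}$ on the Sturm--Liouville eigenbasis $(\varphi_\ell)$ (via Cauchy--Schwarz and $\|(\varphi_\ell'\mub)'\|_{L^2}=\|\mub\,\mathscr A\varphi_\ell\|_{L^2}$), whence $\varkappa_j\le\lambda_j+C\lambda_j^{1/2}$ by min--max. No interpolation or abstract perturbation theory is needed. For (d), your bootstrap is essentially the paper's; the only difference is that the paper first proves the auxiliary bound $\|\phi_j\|_{L^2}\le C\varkappa_j^{-1/2}$ (from $\varkappa_j\|\phi_j\|_{L^2}^2=\langle\phi_j,\mathscr L\phi_j\rangle_{L^2}$), which yields $\|\phi_j''\|_{L^2}\le C\varkappa_j^{1/2}$ directly, whereas you reach the same $L^\infty$ bounds via the Gagliardo--Nirenberg interpolation $\|f\|_{L^\infty}^2\le 2\|f\|_{L^2}\|f'\|_{L^2}$. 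Both work.
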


The proof of Proposition~\ref{BON} is postponed to the sections~\ref{sect:op} and~\ref{sect:reg}.


\begin{proposition} 
\label{centralProp} 
 Assume that the external potential $V\in \Co^{3,1}(\T)$.
There exists a constant $C=C(\beta,V)>0$ such that, if we set $$F:=\big(\sqrt{\varkappa_1}\,\nu_N(\phi_1),\ldots,\sqrt{\varkappa_d}\,\nu_N(\phi_d)\big),$$ then we have for every $N\geq 10$ and $d\ge 1$,
\[
\mathrm {W}_2 \big (F , \,\mathcal N(0,\mathrm I_d) \big) \leq 
 \frac C{\sqrt N} \left( \beta  \log N  \sqrt{\sum_{j=1}^d \varkappa_j^{3}} + \sqrt{\log N   \sum_{j=1}^d \varkappa_j^2 \sum_{j=1}^d \varkappa_j}  \;\right)   . 
\]
\end{proposition}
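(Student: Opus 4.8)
The plan is to invoke the multivariate normal approximation theorem of \citep*{LLW17} (Theorem~\ref{thm:Stein} below) with the test vector $F=(F_1,\dots,F_d)$, $F_j:=\sqrt{\varkappa_j}\,\nu_N(\phi_j)$. The hypotheses of that theorem require, roughly, that each $F_j$ be an approximate eigenfunction of the generator $\L$ with eigenvalue normalized to $1$, and control on the carré-du-champ $\Gamma(F_i,F_j)=\nabla F_i\cdot\nabla F_j$ and on the error in the eigenvalue relation. The first step is to rewrite Lemma~\ref{keyLemma}: since $\mathscr L\phi_j=\varkappa_j\phi_j$, we get
\[
\L\, F_j = \L\big(\sqrt{\varkappa_j}\,\nu_N(\phi_j)\big)=-\varkappa_j\sqrt{\varkappa_j}\,\nu_N(\phi_j)+\frac{\beta\sqrt{\varkappa_j}}{\sqrt N}\zeta_N(\phi_j)= -\varkappa_j F_j+\frac{\beta\sqrt{\varkappa_j}}{\sqrt N}\zeta_N(\phi_j),
\]
so after dividing by $\varkappa_j$ the vector $(\varkappa_j^{-1}F_j)$ — or rather $F$ with the $\varkappa_j$ absorbed into the normalization as in the statement of Theorem~\ref{thm:Stein} — satisfies the approximate eigenrelation with error term $\beta\varkappa_j^{-1/2}\zeta_N(\phi_j)/\sqrt N$. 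One then needs to check that the limiting covariance is the identity: here I would use $\langle \phi_i,\phi_j\rangle_\Hi=\delta_{ij}$ together with the identity $\langle\phi_i,\mathscr L^{-1}\phi_j\rangle_\Hi=\varkappa_j^{-1}\delta_{ij}$, which is exactly why the prefactors $\sqrt{\varkappa_j}$ are chosen.

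The bulk of the work is estimating the two error contributions that the Stein/LLW bound produces. The first comes from the $\zeta_N$ terms: applying Lemma~\ref{leConc} gives $\E[|\zeta_N(\phi_j)|^2]\le C\|\phi_j'''\|_{\rm Lip}^2(\log N)^2$, and by Proposition~\ref{BON}(d) with $k=3$ (here $V\in\Co^{3,1}$, so $m\ge 3$) we have $\|\phi_j'''\|_{\rm Lip}\le C_3\varkappa_j^{2}$. Hence the contribution of $\beta\varkappa_j^{-1/2}\zeta_N(\phi_j)/\sqrt N$, summed in the relevant $\ell^2$ fashion over $j\le d$, is bounded by $\frac{C\beta\log N}{\sqrt N}\sqrt{\sum_{j\le d}\varkappa_j^{-1}\varkappa_j^{4}}=\frac{C\beta\log N}{\sqrt N}\sqrt{\sum_{j\le d}\varkappa_j^{3}}$, which is the first term in the claimed bound. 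The second contribution is the fluctuation of $\Gamma(F_i,F_j)$ around its mean, i.e.\ of $\int \phi_i'\phi_j'\,\d\mu_N$ around $\int\phi_i'\phi_j'\,\d\mub$ (up to the $\sqrt{\varkappa_i\varkappa_j}$ factors). I would write this as $\varkappa_i^{1/2}\varkappa_j^{1/2} N^{-1/2}\nu_N(\phi_i'\phi_j')$ and apply the variance bound \eqref{leConcA} of Lemma~\ref{leConc}: $\E[|\nu_N(\phi_i'\phi_j')|^2]\le C\|\phi_i'\phi_j'\|_{\rm Lip}^2\log N$. Using $\|\phi_i'\phi_j'\|_{\rm Lip}\le \|\phi_i'\|_\infty\|\phi_j'\|_{\rm Lip}+\|\phi_i'\|_{\rm Lip}\|\phi_j'\|_\infty$ together with Proposition~\ref{BON}(d) for $k=0,1$ (giving $\|\phi_j'\|_{\rm Lip}\le C\varkappa_j$ and $\|\phi_j'\|_\infty\lesssim\|\phi_j'\|_{\rm Lip}^{1/2}\cdot\|\phi_j\|_\infty^{1/2}$ or more simply $\|\phi_j'\|_\infty\le C\varkappa_j^{1/2}$ after noting $\|\phi_j\|_\Hi=1$ controls $\|\phi_j'\|_{L^2}$ and hence, with the Lipschitz bound, $\|\phi_j'\|_\infty$), one gets $\|\phi_i'\phi_j'\|_{\rm Lip}\le C(\varkappa_i^{1/2}\varkappa_j+\varkappa_i\varkappa_j^{1/2})$. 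Feeding this into the LLW bound, the $\Gamma$-fluctuation term is of order $N^{-1/2}\sqrt{\log N}$ times a sum over $i,j\le d$ of $\varkappa_i^{1/2}\varkappa_j^{1/2}\cdot(\varkappa_i^{1/2}\varkappa_j+\varkappa_i\varkappa_j^{1/2})$, which factorizes as $\big(\sum_j\varkappa_j^2\big)\big(\sum_j\varkappa_j\big)$ (symmetrizing the two terms), yielding the second term $\frac{C}{\sqrt N}\sqrt{\log N\sum_{j\le d}\varkappa_j^2\sum_{j\le d}\varkappa_j}$.

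Putting the two pieces together and collecting constants — all depending only on $\beta$ and $V$ through the $C_k$ of Proposition~\ref{BON} and the constant of Lemma~\ref{leConc} — gives the stated inequality. The main obstacle I anticipate is bookkeeping the exact shape of the LLW bound: their theorem expresses the $\mathrm W_2$ distance in terms of $\E\big[\|\mathrm{Id}-\tfrac12(\Lambda^{-1}\Gamma+\Gamma\Lambda^{-1})\|_{HS}^2\big]^{1/2}$ plus an $\L$-error term involving $\sum_j\E[|\varkappa_j^{-1}\cdot(\text{error}_j)|^2]$ suitably weighted, and one has to match the $\varkappa_j$-weightings in their statement with the normalization $F_j=\sqrt{\varkappa_j}\nu_N(\phi_j)$ so that no stray powers of $\varkappa_j$ appear. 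A secondary technical point is justifying $\phi_i'\phi_j'$ is Lipschitz with the stated constant and that $\nu_N$ integrated against it is covered by \eqref{leConcA}; this is routine given Proposition~\ref{BON}(d). Everything else is substitution of the spectral and concentration estimates already in hand.
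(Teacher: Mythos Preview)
Your proposal is correct and follows essentially the same route as the paper: apply Theorem~\ref{thm:Stein} with $K=\diag(\varkappa_1,\dots,\varkappa_d)$, use Lemma~\ref{keyLemma} together with $\mathscr L\phi_j=\varkappa_j\phi_j$ to get $F+K^{-1}\L F=\big(\beta\varkappa_j^{-1/2}N^{-1/2}\zeta_N(\phi_j)\big)_j$, and bound the two error terms exactly as you describe via Lemma~\ref{leConc} and Proposition~\ref{BON}(d). The paper's computation of the $\Gamma$-term uses that the $K^{-1/2}\Gamma K^{-1/2}$ normalization reduces $(\mathrm I_d-K^{-1/2}\Gamma K^{-1/2})_{ij}$ to $-N^{-1/2}\nu_N(\phi_i'\phi_j')$ by the orthonormality $\langle\phi_i,\phi_j\rangle_\Hi=\delta_{ij}$, which resolves the bookkeeping concern you flagged; the rest is identical to what you wrote.
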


Here $\mathcal N(0,\mathrm I_d)$ stands for a real standard Gaussian random vector in $\R^d$.
This proposition is a consequence of the  previous concentration estimates together with the following general normal approximation given by \cite[Proposition 2.1]{LLW17};  for $F=(F_1,\ldots,F_d)\in\Co^2(\T^N ,\R^d)$ we set $\L F:=(\L F_1,\ldots,\L F_d)$ and denote by $\|\cdot\|_{\R^d}$ the Euclidean norm of $\R^d$.

\begin{theorem} 
\label{thm:Stein} 
For any given $F=(F_1,\ldots,F_d)\in\Co^2(\T^N ,\R^d)$, let 
$$
\Gamma:=\Big[ \nabla F_i \cdot \nabla F_j\Big]_{i,j=1}^d
$$ 
and see both $F$ and $\Gamma$ as random variables defined on the probability space $(\T^N,\frak B(\T)^{\otimes N},  \P_N)$. Given any $d\times d$ diagonal matrix $K$ with positive diagonal entries, we have
\[
\mathrm {W}_2 \big (F ,\, \mathcal N(0,\mathrm I_d) \big ) \leq  \sqrt{\E \left[ \| F + {K}^{-1} \, \L F \big \|^2_{\R^d} \right] }   + \sqrt{\E \left[ \big\| \mathrm{I}_d - {K}^{-1}\,  \Gamma \big\|^2_{\R^{d \times d}} \right] } .
\]
\end{theorem}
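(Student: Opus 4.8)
The goal is to prove Theorem~\ref{thm:Stein}, the general normal approximation bound, which is the engine behind Proposition~\ref{centralProp}. I will follow the strategy of Stein's method via the generator approach, exactly as in \citep{LLW17}.

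\medskip

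\textbf{Plan of proof.} The starting point is the fact that $\L$ is the generator of a reversible diffusion on $\T^N$ with invariant measure $\P_N$, satisfying the integration-by-parts identity $\int f(-\L g)\,\d\P_N = \int \nabla f\cdot\nabla g\,\d\P_N$ noted just before Lemma~\ref{keyLemma}. First I would recall the multivariate Stein characterization: a random vector $Z\in\R^d$ is $\mathcal N(0,\mathrm I_d)$ if and only if $\E[\nabla\cdot g(Z) - Z\cdot g(Z)]=0$ for all test functions $g:\R^d\to\R^d$ in a suitable class. To compare $F$ with $\mathcal N(0,\mathrm I_d)$ in $\mathrm W_2$, one uses the $\mathrm W_2$-Stein bound: for the solution $g$ of the Stein equation associated with a $1$-Lipschitz gradient test function, the relevant Hessian bounds hold (the Stein equation $\Delta h(x) - x\cdot\nabla h(x) = \varphi(x) - \E\varphi(Z)$ has a solution with $\|\nabla^2 h\|$ controlled), so that
\[
\mathrm W_2(F,\mathcal N(0,\mathrm I_d)) \le \sup \Big| \E\big[\nabla\cdot g(F) - F\cdot g(F)\big] \Big|
\]
where the supremum is over the appropriate class of $g$ arising from $1$-Lipschitz cost functions. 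This is the standard input I would cite from the Stein's method literature (e.g.\ \citep{LLW17, Ross11}); I would not reprove it.

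\medskip

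\textbf{Key algebraic step.} The heart of the argument is to rewrite $\E[F\cdot g(F)]$ using the generator. Writing $K = \diag(\varkappa_1,\dots,\varkappa_d)$ and introducing $\widetilde F := K^{-1}F$ so that $F_i = \varkappa_i \widetilde F_i$, the idea is that $F_i$ "is" (up to error) a $\varkappa_i$-eigenfunction of $-\L$, i.e.\ $-\L F_i \approx \varkappa_i F_i = K F$ componentwise (this is precisely the content of Lemma~\ref{keyLemma} when $F_i = \sqrt{\varkappa_i}\,\nu_N(\phi_i)$). Concretely, for a test vector field $g = (g_1,\dots,g_d)$ one computes, using reversibility,
\begin{align*}
\E\big[F\cdot g(F)\big] &= \sum_i \E\big[\varkappa_i \widetilde F_i\, g_i(F)\big]
= \sum_i \E\big[ (-K^{-1}\L F)_i\,(KF)_i\cdots\big],
\end{align*}
wait --- more precisely I would write $\E[F\cdot g(F)] = \E[(-K^{-1}\L F)\cdot K g(F)] + \E[(F + K^{-1}\L F)\cdot K g(F)]$ and treat the second term as an error. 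For the first term, reversibility gives $\E[(-\L F_i)\,\varphi] = \E[\nabla F_i\cdot\nabla\varphi]$ with $\varphi = (Kg)_i(F)/\varkappa_i \cdot \varkappa_i$; applying this with $\varphi = g_i(F)$ (using the chain rule $\nabla(g_i(F)) = \sum_j \partial_j g_i(F)\nabla F_j$) produces
\[
\E\Big[\sum_i (-\L F_i)\,g_i(F)\Big] = \E\Big[\sum_{i,j} \partial_j g_i(F)\,\nabla F_i\cdot\nabla F_j\Big] = \E\big[\langle \nabla g(F), \Gamma\rangle\big].
\]
Combining, $\E[F\cdot g(F)] = \E[\langle K^{-1}\nabla g(F), \Gamma\rangle] + \text{(error)}$, so
\[
\E\big[\nabla\cdot g(F) - F\cdot g(F)\big] = \E\big[\langle \nabla g(F),\, \mathrm I_d - K^{-1}\Gamma\rangle\big] - \E\big[(F + K^{-1}\L F)\cdot K g(F)\big].
\]
Bounding the two terms by Cauchy--Schwarz, using that $\|\nabla g(F)\|$ and $\|K g(F)\|$ (equivalently $\|g(F)\|$, after possibly absorbing $K$ into the Stein-solution bound as in \citep{LLW17}) are controlled by the Stein regularity estimates, yields the two summands $\sqrt{\E\|F + K^{-1}\L F\|^2_{\R^d}}$ and $\sqrt{\E\|\mathrm I_d - K^{-1}\Gamma\|^2_{\R^{d\times d}}}$.

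\medskip

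\textbf{Main obstacle.} The routine parts are the chain-rule computation and the Cauchy--Schwarz bookkeeping. The genuinely delicate point is the Stein regularity: one must invoke the sharp bounds on the solution of the multivariate Stein equation in the $\mathrm W_2$ metric to ensure that the operator norm $\|\nabla g(F)\|_{\mathrm{op}}$ (entering the $\Gamma$-term) and the Euclidean norm $\|g(F)\|$ (entering the $\L F$-term) are each bounded by $1$-type constants independent of $d$, so that the final inequality has no dimensional loss beyond what appears explicitly. This is the technical heart and is exactly where I would lean on \cite[Proposition 2.1]{LLW17} and its underlying Stein-kernel estimates rather than redo them; the diagonal structure of $K$ is what makes the $K^{-1}$ factors land cleanly on the right-hand side. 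I would also note the matching of function classes: $F\in\Co^2(\T^N,\R^d)$ is exactly the regularity needed for $\L F$ to make sense pointwise and for the integration-by-parts identity to apply without boundary terms (none, since $\T^N$ is closed).
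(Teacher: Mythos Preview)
The paper does not prove Theorem~\ref{thm:Stein}: it is stated without proof and attributed directly to \cite[Proposition~2.1]{LLW17}. So there is no ``paper's own proof'' to compare against; your sketch is in effect a sketch of the argument in \cite{LLW17}, and it correctly identifies the generator/Stein-method mechanism (integration by parts for $\L$, chain rule producing $\Gamma$, Cauchy--Schwarz, and the Stein-equation regularity as the black box).

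One algebraic slip worth flagging: in your decomposition the error term should be $\E\big[(F + K^{-1}\L F)\cdot g(F)\big]$, not $\E\big[(F + K^{-1}\L F)\cdot K g(F)\big]$. If you write $F_i = -\varkappa_i^{-1}\L F_i + (F + K^{-1}\L F)_i$ and apply the integration-by-parts identity to the first piece only, no factor of $K$ appears on $g$. This matters: a stray $K$ would insert an extra factor of $\max_i\varkappa_i$ into the first summand of the bound, which is not there in the statement. With that correction, and taking for granted the $\mathrm W_2$-Stein regularity estimates you cite (bounding $\|g\|$ and $\|\nabla g\|_{\mathrm{op}}$ uniformly), your outline is sound.
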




\begin{proof}[Proof of Proposition~\ref{centralProp}] By  Proposition~\ref{BON} (a)  and Lemma~\ref{keyLemma}, we have  for every $j\ge 1$,
\[
\L F_j = - \varkappa_j F_j + \beta \sqrt{\frac{\varkappa_j}{N}} \,\zeta_N(\phi_j) .
\]
As a consequence, taking $K:=\diag(\varkappa_1,\ldots,\varkappa_d)$, we obtain
$$
\E \left[ \| F + K^{-1} \, \L F \big \|^2_{\R^d} \right]= \frac{\beta^2}{N} \sum_{j=1}^d \varkappa_j^{-1}\E \Big[ \big| \zeta_N(\phi_j)\big|^2\Big] 
$$ 
and, by Lemma~\ref{leConc} and Proposition~\ref{BON} (c)-(d), this yields
\eq
\label{est1}
\E \left[ \| F + K^{-1} \, \L F \big \|^2_{\R^d} \right] \le   C\beta^2 \frac{(\log N)^2}{N} \sum_{j=1}^d \varkappa_j^{3} . 
\qe 

Next, since for any $i,j\in\N$,
$$
\Gamma_{ij} = \nabla F_i \cdot \nabla F_j  = \sqrt{\varkappa_i\varkappa_j} \int \phi_i' \phi_j' \,\d\muh ,
$$
and using that the $\phi_j$'s are orthonormal, we obtain
\begin{align*}
\E \left[ \big\| \mathrm{I}_d - K^{-1}\,  \Gamma \big\|^2_{\R^{d \times d}} \right]  & =\E \left[ \big\| \mathrm{I}_d - K^{-1/2}\,  \Gamma \, K^{-1/2}\big\|^2_{\R^{d \times d}} \right] \\
& =  \frac{1}{N} \sum_{i,j=1}^d \E\left[\left|\int \phi_i' \phi_j' \,\d\nu_N\right|^2 \right].
\end{align*}
Proposition~\ref{BON} (d)  moreover yield, for any $i,j\in\N$, 
\[
\| \phi_i' \phi_j' \|_{\rm Lip} \le \|\phi_i'\|_{\Lip}\|\phi_j\|_{\Lip}+  \|\phi_i\|_{\Lip}\|\phi_j'\|_{\Lip}
\leq C \big( \varkappa_i \sqrt{\varkappa_j} + \sqrt{\varkappa_i} \varkappa_j  \big)
\]
and it thus follows from Lemma~\ref{leConc} that
\eq
\label{est2}
\E \left[ \big\| \mathrm{I}_d - K^{-1}\,  \Gamma \big\|^2_{\R^{d \times d}} \right]    \le C\frac{\log N}{{N}} \sum_{j=1}^d  \varkappa_j^2 \sum_{j=1}^d  \varkappa_j. 
\qe
The proposition follows by combining estimates \eqref{est1} and \eqref{est2} together with Theorem~\ref{thm:Stein}.
\end{proof}

We are finally in position to prove Theorem~\ref{thm:clt} by decomposing a general test function into the eigenbasis $(\phi_j)_{j=1}^\infty$ and by using Proposition~\ref{centralProp}.

\begin{proof}[Proof of Theorem~\ref{thm:clt}]  Assume that $V\in \Co^{3,1}(\T)$ and let $\psi\in\Co^{2\gamma+1}(\T)$ for some integer $\gamma \geq 2$. We can assume without loss of generality that $\int \psi\,\d\mub=0$. Thus $\psi\in\Hi$ and we have by Proposition~\ref{BON}~(b), 
\begin{equation} \label{Fseries}
\psi \stackrel{\Hi}{=} \sum_{j =1}^\infty  \langle \psi,\phi_j \rangle_\Hi \,\phi_j 
\end{equation}
Moreover, since  $\psi$ lies in the domain of $\mathscr{L}^\gamma$ and using that $\mathscr{L}$ is symmetric, we have
\begin{equation} \label{Fcoeff}
\big|   \langle \psi,\phi_j \rangle_\Hi  \big| =\frac1{\varkappa_j^\gamma} \big|\langle \mathscr L^\gamma\psi,\phi_j\rangle_\Hi\big| \le \frac1{\varkappa_j^\gamma}  {\| \mathscr{L}^\gamma\psi \|_\Hi}. 
\end{equation}
In particular, by Proposition~\ref{BON}~(c), the series \eqref{Fseries} converges uniformly on $\T$.

Next, given any $d\in\N$, let us consider the truncation of $\psi$,
\[
\psi^{[d]} :=  \sum_{j=1}^d  \langle \psi,\phi_j \rangle_\Hi\,  \phi_j.
\]
Proposition~\ref{BON}~(c)-(d) and  the upper bound \eqref{Fcoeff}  yield for $\gamma>1$, 
\[ \begin{aligned}
\| \psi-\psi^{[d]} \|_{\rm Lip} &\le C \sum_{j=d+1}^\infty \big|  \langle \psi,\phi_j \rangle_\Hi  \big| \sqrt{\varkappa_j }\\
&\le C \| \mathscr{L}^\gamma \psi \|_\Hi \sum_{j=d+1}^\infty \varkappa_j^{-\gamma+1/2} \\
& \le C  \| \mathscr{L}^\gamma \psi \|_\Hi \,d^{-2(\gamma-1)} . 
\end{aligned}\]
Thus, by definition of the $\mathrm W_2$ metric and  Lemma~\ref{leConc}, this yields
\begin{equation} \label{Wass1}
\mathrm {W}_2\left(\nu_N(\psi)\, ,\,\nu_N(\psi^{[d]})    \right)  \le 
\sqrt{\E\bigg[\Big|\int  (\psi-\psi^{[d]}) \, \d\nu_N\Big|^2 \bigg] } \le C  \| \mathscr{L}^\gamma \psi \|_\Hi \frac{\sqrt{\log N}}{d^{2(\gamma-1)}}.
\end{equation}

Next, if we set 
$\eta_m:= \sum_{j=1}^{m} \varkappa_j^{-1} \big|   \langle \psi,\phi_j \rangle_\Hi \big|^2 $ for $m\in\N\cup\{\infty\}$, 
then we obtain from \eqref{Fcoeff} and  Proposition~\ref{BON}~(c) that
\begin{align}
\mathrm {W}_2\big(\mathcal N(0,\eta_d)\,,  \, \mathcal N(0,\eta_\infty) \big)^2\nonumber
& = ( \sqrt{\eta_\infty} - \sqrt{\eta_d}  \,)^2\nonumber\\
&  \le 
\; \eta_\infty -\eta_d \nonumber \\
& = { \sum_{j=d+1}^\infty  \varkappa_j^{-1} \big|   \langle \psi,\phi_j \rangle_\Hi \big|^2 }\nonumber \\
& \leq  C\| \mathscr{L}^\gamma \psi \|_\Hi^2\,d^{-4\gamma-1}.
\end{align}

Last,  Proposition~\ref{centralProp} and Proposition~\ref{BON} (c) yields 
\begin{align} \label{Wass2}
\mathrm {W}_2\left( \nu_N(\psi^{[d]})  \,, \, \mathcal N(0,\eta_d) \right) \nonumber
 & = \mathrm {W}_2\left( \sum_{j=1}^d \frac{ \langle \psi,\phi_j \rangle_\Hi}{\sqrt{\varkappa_j}} \,\nu_N(\phi_j) \,  ,\,  \mathcal N(0,\eta_d) \right)  \\
 & \nonumber
 \le  \sqrt{\eta_\infty}\;   \mathrm {W}_2\big( F , \,\mathcal N(0,\mathrm I_d) \big)\\
 & \le \sqrt{\frac{ C\eta_\infty}{ N}}\left(\beta  \log N \,d^{7/2}+ \sqrt{\log N}\,d^{4}  \right).
\end{align}

Finally, by combining the estimates \eqref{Wass1}--\eqref{Wass2} and taking  for $d$ the integer part of  $N^{\frac{1/4}{\gamma+1}}$, we obtain
$$
\mathrm {W}_2\big(\nu_N(\psi)\, ,\,\mathcal N(0,\eta_\infty)   \big)  \leq  C_\psi \sqrt{\frac{\log N}{N^{\frac{\gamma-1}{\gamma+1}}}}
$$
where $C_\psi>0$ depends on  ${\eta_\infty}$,  $\| \mathscr{L}^\gamma \psi \|_\Hi $, $\beta$ and $V$ only.  It remains to check that $\eta_\infty$ equals to the variance $\sigma_\beta^V(\psi)^2$ given in \eqref{varAss}; this is proven in  Proposition~\ref{thm:L} below. The proof of the theorem is therefore complete. 
\end{proof}

%

\section{Concentration estimates: Proof of Lemma~\ref{leConc}} \label{sect:est}

 If we use the Kantorovich-Rubinstein dual representation of $\mathrm{W}_1$ and take $r:=R\sqrt{\log N/N}$ in  Theorem~\ref{thm:concentration}, then under the same assumptions and using the same notation as in that theorem  we obtain the following estimate:  there exists $C=C(\mub,\beta)>0$ and $\kappa=\kappa(\beta)>0$ such that, for every $R\ge 6$ and $N\geq 10$, 
\eq
\label{con}
\P_N\left(\sup_{\|f\|_{\mathrm{Lip}\leq 1}}\left|\int f\,\d\nu_N \right|> \sqrt{R\log N}\right)
\leq   C  N^{-\kappa R }.
\qe

We also need the next estimate.

\begin{lemma} \label{lem:errorbnd}
 There exists $\kappa=\kappa(\beta)>0$ and a constant $R_0>0$ such that, for any function $\psi\in \Co^{2,1}(\T)$, one has for every $R\ge R_0$ and $N\geq 10$, 
$$\P_N\left( \left| \iint \frac{\psi(x)-\psi(y)}{2\tan(\tfrac{x-y}{2})} \,\nu_N(\d x)\nu_N(\d y)  \right| > R \|\psi''\|_{\rm Lip} \log N \right) \leq   C N^{-\kappa' R }.
$$
\end{lemma}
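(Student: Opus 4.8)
The plan is to prove the bilinear estimate in Lemma~\ref{lem:errorbnd} by relating the double integral against $\nu_N$ to a squared Lipschitz seminorm of $\nu_N$, which we can then control with the concentration bound \eqref{con}. The key observation is that the kernel $K(x,y) := \frac{\psi(x)-\psi(y)}{2\tan((x-y)/2)}$ is, up to harmless corrections, the value at $(x,y)$ of a commutator-type operator built from $\psi'$ and the Hilbert transform $H$; indeed, recalling \eqref{eq:Xi2} and the identity $\Xi\psi=\pi(H(\psi\mu)-\psi H\mu)$, one sees that integrating this kernel against $\nu_N\otimes\nu_N$ produces (after symmetrization) an expression of the form $\langle \psi' , H\rangle$-type quadratic form in $\nu_N$.

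First I would rewrite the double integral. Using $\frac{1}{2\tan((x-y)/2)}$ as the (principal value) kernel of $\pi H$ and a first-order Taylor expansion of $\psi'$, write
\[
\iint \frac{\psi(x)-\psi(y)}{2\tan(\tfrac{x-y}{2})}\,\nu_N(\d x)\nu_N(\d y)
= \iint \left(\int_0^1 \psi'\big(y+t(x-y)\big)\,\d t\right)\frac{x-y}{2\tan(\tfrac{x-y}{2})}\,\nu_N(\d x)\nu_N(\d y).
\]
The factor $\frac{x-y}{2\tan((x-y)/2)}$ is a smooth, bounded, even function on $\T$ with bounded derivatives, so it acts as a nice Fourier multiplier; the remaining object is a bilinear form whose kernel $k(x,y)$ satisfies $\|k\|_{\Lip}\lesssim \|\psi''\|_{\Lip}$ (differentiating the $t$-integral brings down one factor of $\psi''$, and the geometric factor and its derivative are bounded). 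Then, since $\nu_N$ has total mass zero, one can decompose $k(x,y)=\sum_\alpha a_\alpha(x)b_\alpha(y)$ or more simply bound the bilinear form by $\|k\|_{\Lip}\big(\sup_{\|f\|_{\Lip}\le 1}|\int f\,\d\nu_N|\big)^2$ after absorbing the cross term, using that for a fixed Lipschitz-in-both-variables kernel one has $|\iint k\,\d\nu_N\d\nu_N|\le C\|k\|_{\Lip}\,\|\nu_N\|_{\Lip^*}^2$ where $\|\nu_N\|_{\Lip^*}:=\sup_{\|f\|_{\Lip}\le1}|\int f\,\d\nu_N|=\mathrm W_1(\muh,\mub)$.

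With that reduction in hand, I would invoke \eqref{con}: on the event $\{\|\nu_N\|_{\Lip^*}\le\sqrt{R\log N}\}$, which holds with probability at least $1-CN^{-\kappa R}$, the bilinear form is bounded by $C\|\psi''\|_{\Lip}\cdot R\log N$. Adjusting constants ($R\mapsto R/C$, and choosing $R_0$ large enough that $R_0\ge 6C$ so \eqref{con} applies) yields the claimed bound with some $\kappa'=\kappa'(\beta)>0$. The main obstacle, and the step requiring care, is the first one: justifying that the double integral is genuinely controlled by $\|\nu_N\|_{\Lip^*}^2$ with a constant proportional to $\|\psi''\|_{\Lip}$ — the principal-value singularity of $1/\tan$ must be handled (this is exactly why the Taylor expansion that cancels it against $\psi(x)-\psi(y)$ is essential), and one must verify that the resulting kernel $k(x,y)=\int_0^1\psi'(y+t(x-y))\,\d t\cdot \frac{x-y}{2\tan((x-y)/2)}$ is Lipschitz \emph{jointly} on $\T\times\T$ with the right seminorm — including at the antipodal points where $\tan((x-y)/2)$ blows up, where one checks directly that $\frac{x-y}{2\tan((x-y)/2)}$ extends smoothly. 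A clean alternative is to expand $k$ in a double Fourier series and bound $\sum_{j,k}|\hat k_{jk}|$ by $\|\psi''\|_{\Lip}$ up to a constant, then apply \eqref{con} mode by mode; either route works, but the Fourier route makes the dependence on $\|\psi''\|_{\Lip}$ most transparent.
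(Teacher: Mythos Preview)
Your overall outline---reduce the bilinear form to the squared dual-Lipschitz norm $\|\nu_N\|_{\Lip^*}^2$ and then invoke \eqref{con}---is the right strategy, and it is essentially what the paper does. However, there is a genuine gap in the key reduction step.

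The inequality you invoke, namely that for a kernel $k$ which is merely \emph{jointly Lipschitz} on $\T\times\T$ one has
\[
\Big|\iint k\,\d\nu\,\d\nu\Big|\le C\|k\|_{\Lip}\,\|\nu\|_{\Lip^*}^2
\]
for any signed measure $\nu$ of total mass zero, is \emph{false}. A simple counterexample is $k(x,y)=|\e^{\ic x}-\e^{\ic y}|$ and $\nu=\delta_a-\delta_b$: then $\iint k\,\d\nu\,\d\nu=-2|\e^{\ic a}-\e^{\ic b}|$ while $\|k\|_{\Lip}\,\|\nu\|_{\Lip^*}^2\lesssim |\e^{\ic a}-\e^{\ic b}|^2$, and the former dominates the latter when $a$ and $b$ are close. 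The point is that controlling $g(x):=\int k(x,y)\,\nu(\d y)$ in Lipschitz norm requires not $\|k\|_{\Lip}$ but rather that $y\mapsto \partial_x k(x,y)$ be Lipschitz uniformly in $x$, i.e.\ a bound on the \emph{mixed} second derivative $\partial_x\partial_y k$. Only then does one get $\|g\|_{\Lip}\le\|\partial_x\partial_y k\|_{L^\infty}\|\nu\|_{\Lip^*}$, and hence the desired quadratic bound. Your Fourier alternative has the same defect: summability of $|\hat k_{jl}|$ is not enough, since $\e^{\ic j x}$ has Lipschitz constant $|j|$, so one would need $\sum_{j,l}|j||l||\hat k_{jl}|<\infty$.

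Fortunately, for the specific kernel $K(x,y)=\frac{\psi(x)-\psi(y)}{2\tan((x-y)/2)}$ with $\psi\in\Co^{2,1}(\T)$ the mixed derivative \emph{is} bounded by $C\|\psi''\|_{\Lip}$, and this is precisely what the paper establishes: it computes $\Psi_N'(x)=\int\partial_x K(x,y)\,\nu_N(\d y)$ and then bounds $\partial_y(\partial_x K)$ uniformly via a Taylor expansion of order three, see \eqref{derivphi'}. Once you replace $\|k\|_{\Lip}$ by $\|\partial_x\partial_y K\|_{L^\infty}$ and carry out that estimate, your argument goes through and becomes equivalent to the paper's. (A minor side issue: the factor $\frac{x-y}{2\tan((x-y)/2)}$ does \emph{not} extend smoothly on $\T$ at $x-y=\pi$---it is continuous there but has a corner, since the numerator $x-y$ is not $2\pi$-periodic---so it is cleaner to work with $K$ directly rather than through that factorization.)
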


\begin{proof}
The strategy is to prove that the random function
\[
\Psi_N(x) := \int \frac{\psi(x)-\psi(y)}{2\tan(\tfrac{x-y}{2})} \,\nu_N(\d y) 
\]
has Lipschitz constant  controlled by $\|\psi''\|_{{\rm Lip}}\sqrt{\log N}$ with high probability and then to use \eqref{con}.

Since $\psi\in\Co^{2,1}(\T)$, we verify that for any $x\in\R$,
\begin{equation} \label{Gamma'}
\Psi_N'(x) = - \int \frac{\psi(x)-\psi(y)- \psi'(x)\sin(x-y)}{4\sin^2(\tfrac{x-y}{2})}\, \nu_N(\d y) .
\end{equation}
We now provide an upper bound on the Lipschitz constant of the integrand of $\Psi_N'$ which is uniform in $x$. Indeed, we have
\eq
\label{derivphi'}
\frac{\d}{\d y}\left(\frac{\psi(x)-\psi(y)- \psi'(x)\sin(x-y)}{4\sin^2(\tfrac{x-y}{2})} \right)
= \frac{\big(\psi(x)-\psi(y)\big)\cos(\tfrac{x-y}{2}) - \big(\psi'(x)+ \psi'(y)\big) \sin(\tfrac{x-y}{2})}{4\sin^3(\tfrac{x-y}{2})}  .
\qe
Let et us recall that we introduced $d_\T(x,y)$ in \eqref{metricT}. 
Two Taylor-Lagrange expansions yield, for any $x,y\in\T$,
\[
\psi(x)-\psi(y)  = \frac{d_\T(x,y)}{2}\big(\psi'(x)+ \psi'(y)\big) - \frac{d_\T(x,y)^2}{4}\big(\psi''(u)- \psi''(v)\big)  
\]
for some $u, v\in \T$, so that
\[
 \psi(x)-\psi(y)  = \frac{d_\T(x,y)}{2}\big(\psi'(x)+ \psi'(y)\big) +\O\big(d_\T(x,y)^3\big)\|\psi''\|_{\rm Lip}.
 \]
Together with \eqref{derivphi'}, this implies that there exists a constant $C>0$ such that 
 \[
\sup_{x,y\in\T}\left| \frac{\d}{\d y}\left(\frac{\psi(x)-\psi(y)- \psi'(x)\sin(x-y)}{4\sin^2(\tfrac{x-y}{2})} \right) \right| \le C \big( \|\psi''\|_{\rm Lip} +\|\psi'\|_{L^\infty} \big). 
\]
Since the mean value theorem yields that
\eq
\label{ineqChain}
\|\psi'\|_{L^\infty}\leq \pi\|\psi''\|_{L^\infty}\leq 2\pi\|\psi''\|_{\rm Lip}\,,
\qe
we deduce from \eqref{con} that  there exist  constants $\kappa' \ge \kappa$ and $R_0 > 0$ such that for all $R\ge R_0$ and $N\geq 10$,
\[
\P_N\left(\| \Psi_N\|_{\rm Lip} >  \|\psi''\|_{\rm Lip} \sqrt{R\log N}\right)  \leq  C N^{-\kappa' R}. 
\]
Therefore, by \eqref{con} again, we obtain
\begin{align*}
& \P_N\left( \left| \iint \frac{\psi(x)-\psi(y)}{2\tan(\tfrac{x-y}{2})} \,\nu_N(\d x)\nu_N(\d y)  \right|>  \|\psi''\|_{\rm Lip} R\log N \right) \\
&\le \P_N\left( \bigg|\int \Psi_N \,\d\nu_N \bigg| > \|\psi''\|_{\rm Lip} R\log N ,\ \| \Psi_N\|_{\rm Lip} \le  \|\psi''\|_{\rm Lip}  \sqrt{R\log N}\right)   + C N^{-\kappa' R}\\
&\le 2C N^{-\kappa' R}. 
\end{align*}
which completes the proof of the lemma. 
\end{proof}

\begin{proof}[Proof of Lemma~\ref{leConc}]
Using  \eqref{con}  and that, for any real random variable $X$ and $\alpha>0$,
\eq
\label{conToM2}
\E[X^2]=\alpha \int_0^\infty  \,\P(|X|\geq \sqrt{\alpha R}\,)\,\d R,
\qe
we obtain for any $N\geq 10$ and any Lipschitz function $g:\T\to\R$ that
\eq
\label{explCte}
\E\left[\left|\int g\,\d\nu_N\right|^2\right]\leq\left(6+\frac{1}{\kappa\log N}\right)  \|g\|_{\mathrm{Lip}}^2\log N
\qe
and the second statement of the lemma is obtained.

Next, according to \eqref{error} and since $\mu_N$ is a probability measure, we have 
$$
 \big| \zeta_N(\phi)  \big| 
 \le \bigg|  \iint \frac{\phi'(x)- \phi'(y)}{2\tan(\tfrac{x-y}{2})} \nu_N(\d x) \nu_N(\d y)  \bigg|
+\|\phi''\|_{L^\infty}    . 
$$
Using the inequality $\|\phi''\|_{L^\infty}\leq 2\pi \|\phi'''\|_{{\rm Lip}}$ obtained as in \eqref{ineqChain}, we deduce from Lemma~\ref{lem:errorbnd} that for all $R\ge R_0$ and $N\geq 10$, 
$$
\P\Big(\big| \zeta_N(\phi)  \big| \geq   2R \|\phi'''\|_{\rm Lip} \log N\Big)\leq   C N^{-\kappa' R } . 
$$
Thus, combined with \eqref{conToM2} this yields
\eq
\label{explCte2}
\E\left[\big| \zeta_N(\phi)\big| ^2 \right]
\leq 
4
\|\phi'''\|_{{\rm Lip}}^2 (\log N)^2
\left( R_0^2+  \frac{2C}{(\kappa' \log N)^2} \right)
\qe
and the proof of the lemma is complete.


\end{proof}

\section{Spectral theory: Proof of Proposition~\ref{BON} (a)--(c)} \label{sect:op}
In this section, we always assume $V\in\Co^{1,1}(\T)$. In particular it follows from Proposition~\ref{prop:regmub} that $(\log\mub)'$ is Lipschitz continuous.  Recalling \eqref{Lop},
 we  write 
\begin{equation} \label{Ldecomposition}
\mathscr L =  \mathscr A +2\pi\beta\,\mathscr W
\end{equation}
where   we introduced the operators on $L^2(\T)$,
\begin{equation}
 \label{Aop}
\begin{aligned}
\mathscr{A}\phi &:=   -\phi''- (\log\mub)' \phi'  =  - \frac{(\phi'\mub)'}{\mub}\, ,\\
\mathscr{W}\phi &:= - \U(\phi'\mub)  .
\end{aligned}
\end{equation}
Note that $\mathscr A$ is a Sturm-Liouville operator in the sense that it reads 
$$
-\mathscr A = \frac{\d}{\d x} \left(p(x)\frac{\d}{\d x}\right)+q(x)
$$
with $p:=\log\mub$ and $q:=0$;  we refer to \citep{Marcenko11, BES13}   for general references on Sturm-Liouville equations.

We first check that $\mathscr L$ is a positive operator on $\Hi$, as a consequence of the next lemma. 
\begin{lemma} \label{lem:pop}
The operators $\mathscr{A}$ and $\mathscr{W}$ are both positive on $\Hi$.
\end{lemma}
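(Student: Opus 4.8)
The plan is to compute the quadratic forms $\langle \mathscr A\phi,\phi\rangle_\Hi$ and $\langle \mathscr W\phi,\phi\rangle_\Hi$ explicitly for $\phi\in\Hi$ and show both are nonnegative. Recall the inner product on $\Hi$ is $\langle f,g\rangle_\Hi=\int f'g'\,\d\mub=\int f'g'\,\mub\,\frac{\d x}{2\pi}$. The natural route for $\mathscr A$ is integration by parts: since $\mathscr A\phi=-(\phi'\mub)'/\mub$, one should instead pass to the $L^2(\T)$ pairing and write $\langle\mathscr A\phi,\phi\rangle_\Hi$ in terms of $\langle(\mathscr A\phi)',\phi'\rangle$, or more directly verify that $\mathscr A$ is symmetric and that $\langle\mathscr A\phi,\psi\rangle_{L^2}=\int(\phi'\mub)'\,\psi\,\frac{\d x}{2\pi}\cdot(-1)$ — but the cleanest is to observe that, since $\mub$ is bounded above and below (Proposition \ref{propEqM}(b)) and $(\log\mub)'$ is Lipschitz, $\mathscr A$ maps into $L^2$, and then to integrate by parts once to get $\langle \mathscr A\phi,\phi\rangle_\Hi = \int (\mathscr A\phi)'\phi'\,\d\mub$ and massage this into a manifestly nonnegative expression. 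The key identity I expect is that $\mathscr A$ is essentially self-adjoint on $\Hi$ with $\langle\mathscr A\phi,\phi\rangle_\Hi = \int (\phi''+\tfrac{(\log\mub)'}{?}\cdots)$; more robustly, I would use that for the Sturm–Liouville form $-\mathscr A\phi=(p\phi')'/ \text{(weight)}$ one has $\langle \mathscr A\phi,\phi\rangle$ against the weight $\mub$ reducing to $\int (\phi'')\cdots$; ultimately the point is that $\mathscr A$ is conjugate to a standard nonnegative Sturm–Liouville operator.

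The more transparent computation is for $\mathscr W\phi=-H(\phi'\mub)$. Here I would write
\[
\langle \mathscr W\phi,\phi\rangle_\Hi = \int (\mathscr W\phi)'\,\phi'\,\d\mub = -\int \big(H(\phi'\mub)\big)'\,\phi'\,\mub\,\frac{\d x}{2\pi} = -\int H\big((\phi'\mub)'\big)\,\phi'\mub\,\frac{\d x}{2\pi},
\]
using that $H$ commutes with differentiation on $H^1(\T)$. Then, setting $f:=\phi'\mub\in L^2(\T)$ (which lies in $H^1$ under our regularity assumptions) and using $H^\ast=-H$ together with $H f' = (Hf)'$, this becomes $\langle (Hf)', f\rangle_{L^2}\cdot(-1)\cdot 2\pi\cdots$; after an integration by parts it equals $\langle Hf, f'\rangle_{L^2}$ up to sign, and iterating one lands on an expression of the form $\int |\,\cdot\,|$ over Fourier modes. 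Concretely, using $\widehat{Hf}_k = \ic\,\sgn(k)\hat f_k$ from \eqref{FourierH}, the quadratic form $-\int H(f')\,f\,\frac{\d x}{2\pi}$ expands as $\sum_k |k|\,|\hat f_k|^2\ge 0$. So the crux is to reduce $\langle \mathscr W\phi,\phi\rangle_\Hi$ to $2\sum_{k\ge 1} k\,|\widehat{(\phi'\mub)}_k|^2$, which is visibly nonnegative (indeed it is $\|\phi'\mub\|_{\Hi^{1/2}}^2$).

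The main obstacle I anticipate is the bookkeeping of integration by parts and the $\mub$ weights: $\mathscr W\phi$ involves $H(\phi'\mub)$, but the $\Hi$ inner product reintroduces a factor $\mub$, so one must be careful that $\phi'\mub$ is regular enough ($H^1$, which needs $V\in\Co^{1,1}$ and Proposition \ref{prop:regmub}) to move $H$ past the derivative and to apply $H^\ast=-H$ legitimately. For $\mathscr A$, the subtlety is that the natural Sturm–Liouville positivity is with respect to the measure $\mub\,\d x$ rather than $\d x$, so one wants the quadratic form computed in the $\Hi$ pairing to collapse to something like $\int (\phi''+\cdots)^2$ or to a boundary-free nonnegative form; I would handle this by writing $\mathscr A = -\mub^{-1}\partial_x(\mub\,\partial_x)$ and checking directly that $\langle \mathscr A\phi,\phi\rangle_\Hi$, after two integrations by parts on the torus (no boundary terms), is nonnegative — this is where I would be most careful, and I would fall back on the explicit Fourier/spectral description if the direct manipulation gets unwieldy. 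Once both forms are shown nonnegative, positivity of $\mathscr L=\mathscr A+2\pi\beta\mathscr W$ on $\Hi$ is immediate.
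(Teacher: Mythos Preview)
Your plan is correct and is essentially the paper's proof; in particular your treatment of $\mathscr W$ matches exactly, reducing $\langle\mathscr W\phi,\phi\rangle_\Hi$ with $\varphi:=\phi'\mub$ to $-\int_\T (H\varphi)'\,\varphi\,\d x=\sum_{k\in\Z}|k|\,|\hat\varphi_k|^2=\|\varphi\|_{\Hi^{1/2}}^2\ge 0$.

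For $\mathscr A$ you are overcomplicating matters. The \emph{same} substitution $\varphi=\phi'\mub$ that you introduce for $\mathscr W$ settles $\mathscr A$ in one line: since $\mathscr A\phi=-\varphi'/\mub$, we have
\[
\langle\mathscr A\phi,\phi\rangle_\Hi=\int_\T(\mathscr A\phi)'\,\phi'\mub\,\d x=-\int_\T\Big(\frac{\varphi'}{\mub}\Big)'\varphi\,\d x=\int_\T\frac{|\varphi'|^2}{\mub}\,\d x\ge 0,
\]
using a single integration by parts on $\T$ (no boundary terms). There is no need for a second integration by parts, nor for a Fourier or spectral fallback; the weight issues you worry about disappear once you recognize that the $\Hi$-pairing already carries the factor $\mub$ needed to form $\varphi$. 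This is precisely the paper's argument.
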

\begin{proof}
We have for any function $\phi\in\Hi$,
\[
\langle \mathscr{A}\phi ,\phi \rangle_\Hi = -\int_\T \bigg(\frac{\varphi'}{\mub}\bigg)'\,{\varphi}\,\d x = \int_\T |\varphi'|^2 \,\frac{\d x}{\mub}\geq 0
\]
where we set $ \varphi:= \phi'\mub$. Moreover, if one  decomposes $\varphi$ in the Fourier basis, then we have
\eq
\label{Wbound}
\left\langle \mathscr{W}\phi ,\phi \right\rangle_\Hi
= - \int_\T \U(\varphi)' \,{\varphi} \,\d x
= \sum_{k\in\mathbb Z} |k|  |\hat{\varphi}_k|^2=\|\varphi\|^2_{\Hi^{1/2}}\geq 0
\qe
and the lemma is proven.
\end{proof}

 The spectral properties of the  Sturm-Liouville operator  $\mathscr A$ (with periodic boundary conditions) are well known, see for instance \citep[Chapter 2 and 3]{BES13}, from which one can obtain the basic properties:


%
%
%
%

\begin{lemma} \label{lem:SL} There exists a orthonormal basis $(\varphi_j)_{j=1}^\infty$ of~$\Hi$ consisting of (weak) eigenfunctions of  $\mathscr{A}$ associated with positive eigenvalues. Moreover, if 
 \[
 \mathscr{A}\varphi_j=   \lambda_j \varphi_j
 \]
with $0 < \lambda_1 \leq \lambda_{2} \le \cdots$,  then there exists $\alpha>0$ such that, as  $j\to\infty$, $$\lambda_j \sim \alpha j^2.$$ 
 \end{lemma}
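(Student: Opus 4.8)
The plan is to establish Lemma~\ref{lem:SL} by invoking the classical spectral theory of periodic Sturm--Liouville operators and then translating it into the geometry of the Hilbert space $\Hi$. Recall from \eqref{Aop} that $\mathscr A\phi = -(\phi'\mub)'/\mub$, which is formally self-adjoint for the weight $\mub^{-1}$ but, crucially, is self-adjoint and \emph{nonnegative} with respect to the $\Hi$-inner product: indeed by Lemma~\ref{lem:pop} we already have $\langle \mathscr A\phi,\phi\rangle_\Hi = \int_\T |(\phi'\mub)'|^2\,\mub^{-1}\,\d x \ge 0$, and polarizing shows $\langle\mathscr A\phi,\psi\rangle_\Hi = \langle\phi,\mathscr A\psi\rangle_\Hi$ on smooth functions. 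The first step is therefore to set up $\mathscr A$ as an unbounded self-adjoint operator on $\Hi$ via its quadratic form and to show that its resolvent (equivalently $\mathscr A^{-1}$, which makes sense because $\mathscr A$ is strictly positive by the argument below) is a compact operator on $\Hi$.

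For compactness, I would argue as follows. By Proposition~\ref{propEqM}(b) the density $\mub$ is bounded above and below, so the $\Hi$-norm is equivalent to the $H^1$-seminorm $\|\phi'\|_{L^2}$, and $\Hi$ embeds continuously into $H^1(\T)\cap L^2_0$-type spaces. Given $f\in\Hi$, solving $\mathscr A\phi = f$ amounts to solving $-(\phi'\mub)' = f\,\mub$ in the weak sense; writing $u := \phi'\mub$ one gets $u' = -f\mub \in L^2$, hence $u\in H^1(\T)$, and then $\phi' = u/\mub \in H^1(\T)$ since $\mub\in\Co^{1,1}$ (here $V\in\Co^{1,1}$ is used via Proposition~\ref{prop:regmub}). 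Thus $\mathscr A^{-1}$ maps $\Hi$ into a space of functions with $\phi'\in H^1$, which embeds compactly into $\Hi$ by Rellich's theorem. Strict positivity (so that $0$ is not an eigenvalue and $\mathscr A^{-1}$ is genuinely defined on all of $\Hi$) follows because $\langle\mathscr A\phi,\phi\rangle_\Hi = 0$ forces $(\phi'\mub)' = 0$, i.e. $\phi'\mub$ constant; since $\int\phi'\,\d x = 0$ on the torus and $\mub>0$ this constant must be zero, whence $\phi$ is constant, and the normalization $\int\phi\,\d\mub = 0$ forces $\phi\equiv 0$. The spectral theorem for compact self-adjoint operators then yields the orthonormal basis $(\varphi_j)$ of $\Hi$ of eigenfunctions with eigenvalues $\lambda_j > 0$, $\lambda_1\le\lambda_2\le\cdots\to\infty$.

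The remaining point is the Weyl asymptotics $\lambda_j\sim\alpha j^2$. Here I would appeal directly to the classical theory of periodic Sturm--Liouville problems as in \citep[Chapters 2--3]{BES13}: after the Liouville-type substitution that brings $-\tfrac{\d}{\d x}(p\,\tfrac{\d}{\d x})$ with $p=\log\mub$ (wait --- one must be a little careful, since $p=\log\mub$ can be negative; the correct reading of \eqref{Aop} is that the operator relative to the reference measure $\mub(x)\,\d x$ on the $\Hi$ side has leading coefficient $\mub$, which \emph{is} positive, so the standard theory applies). The eigenvalue counting function for such an operator on $\T$ of length $2\pi$ satisfies $\#\{j:\lambda_j\le\Lambda\} \sim \tfrac{1}{\pi}\big(\int_\T \sqrt{w(x)/p(x)}\,\d x\big)\sqrt\Lambda$ for suitable positive weight $w$ and coefficient $p$ built from $\mub$; inverting this gives $\lambda_j\sim\alpha j^2$ with $\alpha = \pi^2\big(\int_\T \sqrt{w/p}\,\d x\big)^{-2}$. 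I expect the only genuinely delicate bookkeeping — and the main obstacle — to be correctly identifying which second-order operator on which weighted $L^2$ space is unitarily equivalent to $(\mathscr A,\Hi)$, so that one may cite the periodic Weyl law in the precise form stated in \citep{BES13} rather than re-deriving it; once that dictionary is fixed, both the orthonormal basis and the $j^2$ asymptotics are immediate consequences of the cited results.
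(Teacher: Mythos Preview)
Your argument is correct but takes a genuinely different route from the paper. The paper does not work directly on $\Hi$; instead it observes from \eqref{Asa} that $\mathscr A$ is a standard nonnegative Sturm--Liouville operator on $L^2(\mub)$, and invokes the classical theory there to obtain an $L^2(\mub)$-orthonormal eigenbasis $(\widetilde\varphi_j)_{j\ge 0}$ and the Weyl asymptotics $\lambda_j\sim\alpha j^2$ in one stroke. The passage to $\Hi$ is then a one-line computation: since $\widetilde\varphi_0=1$ and $\langle\widetilde\varphi_i,\widetilde\varphi_j\rangle_\Hi = \langle\widetilde\varphi_i,\mathscr A\widetilde\varphi_j\rangle_{L^2(\mub)} = \lambda_j\delta_{ij}$, the rescaled family $\varphi_j:=\widetilde\varphi_j/\sqrt{\lambda_j}$ for $j\ge 1$ is $\Hi$-orthonormal (and spans $\Hi$ because $\{\widetilde\varphi_j\}_{j\ge 0}$ spans $L^2(\mub)$ and the constant is removed by the centering). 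Your approach---building the compact resolvent on $\Hi$ by hand via $u=\phi'\mub$ and Rellich---is perfectly valid and arguably more self-contained for the existence part, but it forces you to return to a weighted $L^2$ picture anyway to extract the Weyl law, which is exactly the ``bookkeeping'' you flag as the main obstacle. The paper avoids that obstacle entirely by starting in $L^2(\mub)$; the dictionary you are looking for is simply that $(\mathscr A,\Hi)$ and $(\mathscr A,L^2(\mub))$ share the same eigenfunctions and eigenvalues once the kernel $\{\text{constants}\}$ is removed, with the $\Hi$-normalization differing by $\sqrt{\lambda_j}$.
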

\begin{proof} Since for any smooth function $\phi:\T\to\R$ we have 
\begin{equation} \label{Asa}
\langle \mathscr A\phi,\phi\rangle_{L^2(\mub)}:=\int \mathscr A\phi\,{\phi}\,\d\mub=\int |\phi'|^2\,\d\mub\geq 0,
\end{equation}
we see that  $\mathscr A$ is a positive Sturm-Liouville operator on $L^2(\mub)$ 
whose domain is $\Hi^{1}(\mub) := \{ \phi \in L^2(\mub) :  \phi' \in L^2(\mub) \}= \Hi^{1}(\T)$, where we used Proposition~\ref{propEqM}~(b) for this equality.
It then follows from the general properties of the Sturm-Liouville operators that there exists an orthonormal basis of $L^2(\mub)$ consisting of eigenfunctions $(\widetilde \varphi_j)_{j=0}^{+\infty} \subset \Hi^{1}(\T)$  of $\mathscr A$  associated to  non-negative  increasing eigenvalues $(\lambda_j)_{j=0}^{\infty}$. Moreover,  by Weyl's law  (see e.g. \citep[Theorem 3.3.2]{BES13} in our setting),  there exists  $\alpha>0$ such that $\lambda_j \sim \alpha j^2$ as $j\to\infty$.  

The smallest eigenvalue $\lambda_0 = 0$ comes with the  eigenfunction $\widetilde\varphi_0=1$ which is orthogonal to $\Hi$ in $L^2(\T)$, see \eqref{Sspace}. Since the $\widetilde\varphi_j$'s are orthonormal in  $L^2(\mub)$, we have for any $j\geq 1$,
\begin{equation} \label{center}
\int \widetilde\varphi_j\,\d\mub=\langle \widetilde\varphi_j,\widetilde\varphi_0\rangle_{L^2(\mub)}=0 
\end{equation}
and  thus $(\widetilde\varphi_j)_{j=1}^\infty\subset\Hi$. 
Moreover, since we have for any $i,j \in\N$,  
\[ \begin{aligned}
\langle   \widetilde\varphi_i,   \widetilde\varphi_j \rangle_\Hi
&= \langle   \widetilde\varphi_i ',   \widetilde\varphi_j' \rangle_{L^2(\mub)}  \\
&= \langle   \widetilde\varphi_i ,  \mathscr A \widetilde\varphi_j\rangle_{L^2(\mub)} \\
& = \lambda_j\delta_{ij}\, , 
\end{aligned}\]
it follows that $\lambda_1>0$ (since otherwise $ \widetilde\varphi_1$ would be a non-zero constant function and this would contradict \eqref{center}). Finally, if we set $\varphi_j:=\widetilde\varphi_j/\sqrt{\lambda_j}$, then the family $(\varphi_j)_{j=1}^\infty$ is an orthonormal basis of $\Hi$ that  satisfies the requirements of the lemma.
\end{proof} 
 

\begin{proposition} \label{thm:L} Proposition~\ref{BON} (a)--(c) hold true. More precisely, there exists a orthonormal basis  $(\phi_j)_{j=1}^\infty$ of~$\Hi$ such that
$
 \mathscr{L}\phi_j=   \varkappa_j\,\phi_j
$
with $0<\varkappa_1 \le\varkappa_2 \le \cdots$ and  we have  
$$ 
\varkappa_j \sim  \alpha j^2
$$ 
as $j\to\infty$ for the same $\alpha>0$ than in Lemma~\ref{lem:SL}. In particular, $\mathscr L^{-1}$ is a well defined trace class operator on $\Hi$ and, for any $\psi\in\Hi$, we have
\eq
\label{varianceLim}
\langle \psi,\mathscr L^{-1}\psi\rangle_{\Hi}=\sum_{j=1}^\infty  \frac1{\varkappa_j}\,\big|\langle\psi,\phi_j\rangle_{\Hi}\big|^2.
\qe
\end{proposition}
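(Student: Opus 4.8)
The plan is to obtain the eigenbasis of $\mathscr L$ on $\Hi$ by a perturbation argument built on the Sturm--Liouville data from Lemma~\ref{lem:SL}, and then to read off the asymptotics, the trace-class property, and the spectral formula for the variance. First I would observe that by Lemma~\ref{lem:pop} both $\mathscr A$ and $\mathscr W$ are positive symmetric operators on $\Hi$, and that $\mathscr A$ has compact resolvent on $\Hi$ since its eigenvalues $\lambda_j \sim \alpha j^2$ diverge; hence $\mathscr A^{-1}$ is compact (in fact the series $\sum \lambda_j^{-1}$ converges, so it is even trace-class). The operator $\mathscr L = \mathscr A + 2\pi\beta\,\mathscr W$ is then a positive symmetric operator on $\Hi$; the key is to check it is self-adjoint (essentially self-adjoint on the domain of $\mathscr A$) with compact resolvent. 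For this I would show that $\mathscr W$ is $\mathscr A$-bounded with relative bound $0$: indeed, writing $\varphi = \phi'\mub$, we have from \eqref{Wbound} that $\langle \mathscr W\phi,\phi\rangle_\Hi = \|\varphi\|^2_{\Hi^{1/2}}$, while $\langle\mathscr A\phi,\phi\rangle_\Hi = \|\varphi'\|_{L^2}^2$ up to the bounded factors $\delta,\delta^{-1}$ from Proposition~\ref{propEqM}(b); an interpolation inequality $\|\varphi\|^2_{\Hi^{1/2}} \le \epsilon\|\varphi'\|_{L^2}^2 + C_\epsilon\|\varphi\|_{L^2}^2$ then gives the relative-bound-zero statement. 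By the Kato--Rellich theorem $\mathscr L$ is then self-adjoint on $\dom(\mathscr A)$, bounded below, and its resolvent is compact (since a relatively compact perturbation of an operator with compact resolvent still has compact resolvent). Therefore $\mathscr L$ admits an orthonormal eigenbasis $(\phi_j)_{j=1}^\infty$ of $\Hi$ with eigenvalues $0 < \varkappa_1 \le \varkappa_2 \le \cdots \to \infty$; positivity of $\varkappa_1$ follows from positivity of both $\mathscr A$ (with $\lambda_1>0$ by Lemma~\ref{lem:SL}) and $\mathscr W$.

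Next I would establish the Weyl asymptotic $\varkappa_j \sim \alpha j^2$ with the \emph{same} $\alpha$ as for $\mathscr A$. The clean way is the min-max principle: since $0 \le \mathscr W$, we get $\varkappa_j \ge \lambda_j$ for all $j$; for the matching upper bound I would use the relative-bound-zero estimate to write, for any $\epsilon>0$, $\langle\mathscr L\phi,\phi\rangle_\Hi \le (1+\epsilon)\langle\mathscr A\phi,\phi\rangle_\Hi + C_\epsilon\|\phi\|_\Hi^2$ (after absorbing the lower-order $\|\varphi\|_{L^2}^2$ term into $\langle\mathscr A\phi,\phi\rangle_\Hi + \|\phi\|_\Hi^2$ via the Poincaré-type bound $\|\varphi\|_{L^2} \le C(\|\varphi'\|_{L^2}+\|\phi\|_\Hi)$), so that by min-max $\varkappa_j \le (1+\epsilon)\lambda_j + C_\epsilon$. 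Combining with $\lambda_j \sim \alpha j^2$ and letting $\epsilon \to 0$ gives $\varkappa_j \sim \alpha j^2$, which is item (c) of Proposition~\ref{BON}. (Regularity of the eigenfunctions, item (d), is deferred to Section~\ref{sect:reg} and not needed here.)

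Finally, since $\varkappa_j \sim \alpha j^2$, the series $\sum_j \varkappa_j^{-1}$ converges, so $\mathscr L^{-1}$ — well defined because $\varkappa_1>0$ — is trace-class on $\Hi$, with $\Tr \mathscr L^{-1} = \sum_j \varkappa_j^{-1} < \infty$. The formula \eqref{varianceLim} is then immediate: for $\psi \in \Hi$, expand $\psi = \sum_j \langle\psi,\phi_j\rangle_\Hi\,\phi_j$ in the orthonormal basis; since $\mathscr L^{-1}\phi_j = \varkappa_j^{-1}\phi_j$ and $\mathscr L^{-1}$ is bounded and self-adjoint, $\langle\psi,\mathscr L^{-1}\psi\rangle_\Hi = \sum_j \varkappa_j^{-1}|\langle\psi,\phi_j\rangle_\Hi|^2$, with the series converging (each term is nonnegative and bounded by $\varkappa_1^{-1}|\langle\psi,\phi_j\rangle_\Hi|^2$, summable by Parseval).

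The main obstacle I anticipate is the functional-analytic bookkeeping around the domain of $\mathscr L$ and the self-adjointness/compact-resolvent claim: one must be careful that $\mathscr W$, which involves the Hilbert transform composed with multiplication by the (merely Lipschitz, under the $\Co^{1,1}$ hypothesis on $V$) density $\mub$, genuinely maps $\dom(\mathscr A) = \Hi^1(\T)\cap\Hi$ into $\Hi$ and is relatively $\mathscr A$-bounded — this is where the regularity $V \in \Co^{1,1}(\T)$ (hence $\mub \in \Co^{1,1}$, so $(\log\mub)'$ Lipschitz, by Proposition~\ref{prop:regmub}) and the two-sided bounds on $\mub$ are essential. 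Establishing the interpolation estimate $\|\varphi\|_{\Hi^{1/2}}^2 \le \epsilon\|\varphi'\|_{L^2}^2 + C_\epsilon\|\varphi\|_{L^2}^2$ in Fourier variables is elementary ($|k| \le \epsilon k^2 + C_\epsilon$), but threading it correctly through the weighted inner product $\langle\cdot,\cdot\rangle_\Hi$ and the substitution $\varphi = \phi'\mub$ requires the uniform positivity and boundedness of $\mub$ at every step.
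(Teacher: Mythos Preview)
Your proof is correct and shares the paper's overall architecture (positivity of $\mathscr A$ and $\mathscr W$, compact resolvent, spectral theorem, min-max comparison with the $\lambda_j$), but differs from the paper in two places worth flagging. First, for self-adjointness and compactness of $\mathscr L^{-1}$, the paper does not go through a relative-boundedness argument at all: it simply observes that $\mathscr A^{-1}$ is trace-class and $\mathscr W\ge 0$, so $\mathscr L^{-1}=(\mathscr A+2\pi\beta\mathscr W)^{-1}$ is directly a positive self-adjoint compact operator on $\Hi$ (since $0<\mathscr L^{-1}\le\mathscr A^{-1}$ and a positive operator dominated by a trace-class one is itself trace-class), and applies the spectral theorem to $\mathscr L^{-1}$. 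Your route via relative form bounds is more explicit about domains---note, though, that since you are controlling the quadratic form $\langle\mathscr W\phi,\phi\rangle_\Hi$ rather than the operator norm $\|\mathscr W\phi\|_\Hi$, the correct reference is the KLMN theorem rather than Kato--Rellich. Second, for the upper bound in the Weyl asymptotic, the paper tests the reversed min-max on the span $\tilde S_j=\mathrm{span}(\varphi_1,\dots,\varphi_j)$ of the first $j$ eigenfunctions of $\mathscr A$ and bounds $\langle\mathscr W\varphi_\ell,\varphi_\ell\rangle_\Hi\le C\lambda_\ell^{1/2}$ by a direct Cauchy--Schwarz estimate, obtaining the sharper $\varkappa_j\le\lambda_j+C\sqrt{\lambda_j}$, instead of your interpolation $|k|\le\epsilon k^2+C_\epsilon$ giving $\varkappa_j\le(1+\epsilon)\lambda_j+C_\epsilon$. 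Both routes suffice for $\varkappa_j\sim\alpha j^2$; the paper's is quantitatively tighter, while yours is more robust and would transfer more readily to other lower-order perturbations of~$\mathscr A$.
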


 \begin{proof}
 We use here basic results from operator theory, see e.g. \citep{Kato}.
Lemma~\ref{lem:SL} yields that $\mathscr{A}$ is a positive self-adjoint operator on $\Hi$ and that $\mathscr A^{-1}$ is trace-class. Since $\mathscr{W}$ is non-negative and self-adjoint on $\Hi$, it follows that  $\mathscr{L}^{-1} = (\mathscr{A} +2\pi \beta \mathscr{W})^{-1} $ is a positive self-adjoint compact operator on $\Hi$. The spectral theorem for self-adjoint compact operators then yields the existence of  an orthonormal family $(\phi_j)_{j=1}^\infty$ in $\Hi$ and an increasing sequence  of positive numbers $(\varkappa_j)_{j=1}^\infty$ such that $\mathscr L^{-1}=\sum_j \varkappa_j^{-1} \phi_j\otimes\phi_j$.  In particular $\mathscr L\phi_j=\varkappa_j\phi_j$ weakly for every $j\geq 1$. Moreover, since $\mathscr L^{-1}$ is positive, the family $(\phi_j)$ is necessarily a complete orthonormal family in $\Hi$:   part (a) and (b) are thus proven.
 
Writing $\langle\cdot,\cdot\rangle$ instead of $\langle\cdot,\cdot\rangle_{\Hi}$ for simplicity,  the min-max theorem  (see e.g. \cite[Theorem XIII.2]{ReedSimon4}) yields,  for any $j\geq 1$,
$$
\varkappa_{j}   =  \max_{S_{j-1}}\min_{\substack{\psi \in  S_{j-1}^\perp \\ \| \psi\| =1 }} \langle \mathscr{L}\psi,\psi\rangle\, ,
$$
 where the maximum is taken over all subspace $S_{j-1} \subset \Hi$ of dimension $j-1$. By taking $\tilde S_{j} := \operatorname{span}(\varphi_1,\dots, \varphi_{j})$ where $(\varphi_j)_{j=1}^\infty$ is as in Lemma~\ref{lem:SL},  this provides 
\begin{align}
 \label{Weyl+}
\varkappa_{j}  &  \geq   \min_{\substack{\psi \in  \tilde{S}_{j-1}^\perp \\ \| \psi\| =1 }} \langle \mathscr{L}\psi,\psi\rangle\nonumber\\
& \geq   \min_{\substack{\psi \in  \tilde{S}_{j-1}^\perp \\ \| \psi\| =1 }} \langle \mathscr{A}\psi,\psi\rangle=\lambda_j
\end{align}
where we also used that $\mathscr W\geq 0$ in the last inequality.  Similarly, we use the reversed form of the min-max principle to obtain that, using also \eqref{Ldecomposition}, for any $j\ge 1$, 
\begin{align}
\label{Weyl+0}
\varkappa_{j}  & = \min_{S_{j}} \max_{\substack{\psi \in S_{j} \\ \| \psi\| =1 }} \langle \mathscr{L}\psi,\psi\rangle\nonumber\\
 &    \leq \max_{\substack{\psi \in \tilde S_{j} \\ \| \psi\| =1 }} \langle \mathscr{L}\psi,\psi\rangle \nonumber\\
&  \leq  \lambda_j+2\pi\beta \max_{\substack{\psi \in \tilde S_{j} \\ \| \psi\| =1 }}\langle \mathscr{W}\psi,\psi\rangle.
\end{align}
Next, by using \eqref{Wbound}, the Cauchy-Schwarz inequality, that $\U$ is an isometry of $L^2_0(\T)$  such that $\U(\psi)'=\U(\psi')$ for every $\psi\in \Hi^{1}(\T)$,  the second equality in \eqref{Aop} and Proposition~\ref{propEqM} (b), we obtain for any $\ell\geq 1$, 
\begin{align*}
\langle \mathscr{W}\varphi_\ell , \varphi_\ell \rangle_\Hi & \le  2\pi \|(\varphi_\ell'\mub)' \|_{L^{2}} \|\varphi_\ell'\mub \|_{L^2}\\
 & =   2\pi   \|\mub\mathscr A\varphi_\ell \|_{L^{2}}\|\varphi_\ell'\mub \|_{L^2}\\
  & \leq  2\pi \delta^{-1}  \|\mathscr A\varphi_\ell \|_{L^{2}(\mub)}\|\varphi_\ell \|_{\Hi} \\
  &   =  2\pi \delta^{-1}\lambda_\ell^{1/2} . 
\end{align*}
For the last step, we used that by definition, $\|\varphi_\ell\|_{\Hi}=1$ and $\|\varphi_\ell\|_{L^2(\mub)}=\lambda_\ell^{-1/2}$ (see the end of the proof of Lemma~\ref{lem:SL}). 
Together with \eqref{Weyl+0}, this yields
\[
\varkappa_{j}\leq \lambda_j+ 4\pi^2\beta \delta^{-1}\,\lambda_j^{1/2}.
\]
Finally, combined with \eqref{Weyl+} and Lemma~\ref{lem:SL}, the proof of the proposition is complete.
 \end{proof}

\section{Regularity: Proof of Proposition~\ref{BON} (d)}\label{sect:reg}

We start with the following lemma.

\begin{lemma}  \label{lem:phi}
Suppose that $V\in \Co^{1,1}(\T)$.  There exists $C=C(\beta,V)>0$ such that, if $\phi \in \Hi$ satisfies $\|\phi\|_\Hi =1$ and $\mathscr L \phi = \varkappa \phi$  weakly for some $\varkappa>0$, then $\phi'' \in L^2(\T)$ and $\|\phi\|_{L^2} \le C\varkappa^{-1/2}$. 
\end{lemma}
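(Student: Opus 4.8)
The statement has two parts: the regularity $\phi'' \in L^2(\T)$ and the quantitative $L^2$ bound $\|\phi\|_{L^2} \le C\varkappa^{-1/2}$. The natural starting point is the weak eigenvalue equation $\mathscr L\phi = \varkappa\phi$, which, recalling \eqref{Lop}, reads
\[
-\phi'' - 2\pi\beta\, H(\mub\phi') - (\log\mub)'\phi' = \varkappa\phi
\]
in the distributional sense. The plan is to solve this for $\phi''$ and control each term. Since $\|\phi\|_\Hi = 1$, the inequality $\delta\|\phi'\|_{L^2}^2 \le \|\phi\|_\Hi^2 \le \delta^{-1}\|\phi'\|_{L^2}^2$ from Proposition~\ref{propEqM}(b) gives $\|\phi'\|_{L^2} \le \delta^{-1/2}$. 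Also, since $\int\phi\,\d\mub = 0$ and $\mub \ge \delta\cdot 2\pi > 0$, a Poincaré-type argument (or simply bounding $\phi$ by the antiderivative of $\phi'$ on $\T$ after subtracting its $\mub$-mean) shows $\|\phi\|_{L^2} \le C\|\phi'\|_{L^2} \le C$, so $\phi$ is a priori bounded; this handles the crude bounds needed to make sense of the terms.

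\textbf{Regularity.} For $\phi'' \in L^2$: since $V \in \Co^{1,1}(\T)$, Proposition~\ref{prop:regmub} gives $\mub \in \Co^{1,1}(\T)$, hence $(\log\mub)'$ is bounded (indeed Lipschitz, using $\mub \ge \delta\cdot 2\pi$), so $(\log\mub)'\phi' \in L^2$. For the Hilbert transform term, $\mub\phi' \in L^2$ since $\mub \in L^\infty$ and $\phi' \in L^2$; because $H$ is bounded on $L^2(\T)$, $H(\mub\phi') \in L^2$. Therefore the right-hand side $\varkappa\phi + 2\pi\beta H(\mub\phi') + (\log\mub)'\phi' \in L^2$, which forces $\phi'' \in L^2$.

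\textbf{The $L^2$ bound.} This is the main point. The idea is to exploit positivity and test the eigenvalue equation against $\phi$ in the $\Hi$ inner product, using the decomposition $\mathscr L = \mathscr A + 2\pi\beta\mathscr W$ from \eqref{Ldecomposition}. We have $\varkappa = \varkappa\|\phi\|_\Hi^2 = \langle\mathscr L\phi,\phi\rangle_\Hi = \langle\mathscr A\phi,\phi\rangle_\Hi + 2\pi\beta\langle\mathscr W\phi,\phi\rangle_\Hi \ge \langle\mathscr A\phi,\phi\rangle_\Hi = \int|\varphi'|^2\,\d x/\mub$ where $\varphi = \phi'\mub$ (as in the proof of Lemma~\ref{lem:pop}). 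Thus $\int (\varphi')^2\,\mub^{-1}\,\d x \le \varkappa$, and since $\mub^{-1} \ge \delta$ (up to the factor $2\pi$), we get $\|\varphi'\|_{L^2} \le C\sqrt{\varkappa}$. Now the eigenvalue equation can be rewritten: since $-\phi'' - (\log\mub)'\phi' = -\mathscr A\phi = \varphi'/\mub$ (the second form in \eqref{Aop}), we have $\mathscr A\phi = \varkappa\phi + 2\pi\beta\mathscr W\phi$, i.e. $-\varphi'/\mub = \varkappa\phi - 2\pi\beta H(\mub\phi')$, so
\[
\varphi' = -\mub\big(\varkappa\phi - 2\pi\beta H(\varphi)\big).
\]
Taking $L^2$ norms and using $\mub \in L^\infty$, the isometry property $\|H(\varphi)\|_{L^2} = \|\varphi\|_{L^2}$ (valid since $\varphi = \phi'\mub$, and $\widehat\varphi_0 = \int\phi'\mub\,\frac{\d x}{2\pi}$ — actually one should check $\varphi$ has mean zero; since $\int\phi'\,\d\mub = \langle\phi,1\rangle_\Hi$ and the constant need not lie in $\Hi$... rather use $\|H\varphi\|_{L^2}\le\|\varphi\|_{L^2}$ which holds for all $\varphi\in L^2$), we obtain $C\sqrt{\varkappa} \ge \|\varphi'\|_{L^2} \ge \varkappa\|\mub\phi\|_{L^2}/\|\mub\|_{L^\infty}^{?} - C\|\varphi\|_{L^2}$. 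Since $\|\varphi\|_{L^2} = \|\phi'\mub\|_{L^2} \le \|\mub\|_{L^\infty}\|\phi'\|_{L^2} \le C$ and $\|\mub\phi\|_{L^2} \ge 2\pi\delta\|\phi\|_{L^2}$, rearranging yields $\varkappa\cdot 2\pi\delta\|\phi\|_{L^2} \le C\sqrt{\varkappa} + C\cdot 2\pi\beta\cdot C$, hence $\|\phi\|_{L^2} \le C(\varkappa^{-1/2} + \varkappa^{-1})$. For $\varkappa$ bounded below (which holds since $\varkappa \ge \varkappa_1 > 0$, or by Proposition~\ref{thm:L} combined with $\varkappa_j \ge \lambda_j$), the term $\varkappa^{-1}$ is dominated by a constant times $\varkappa^{-1/2}$, giving the claimed $\|\phi\|_{L^2} \le C\varkappa^{-1/2}$.

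\textbf{Anticipated obstacle.} The delicate step is the rearrangement in the last display: one needs a clean lower bound on $\|\mub\phi\|_{L^2}$ of the form $c\|\phi\|_{L^2}$ (immediate from $\mub \ge 2\pi\delta$) and, more importantly, a sufficiently good upper bound on $\|\varphi'\|_{L^2} = \|\phi'\mub + \phi\mub'\|$... wait, $\varphi' = \phi''\mub + \phi'\mub'$, so extracting $\phi''$ from the $\Hi$-energy bound requires also controlling $\phi'\mub'$, which is fine since $\mub' \in L^\infty$ and $\|\phi'\|_{L^2} \le C$. The only real subtlety is tracking that all constants depend on $\beta, V$ only through $\delta, \|\mub\|_{L^\infty}, \|\mub'\|_{L^\infty}$ and that no hidden dependence on $\varkappa$ sneaks in — in particular that $\|\varphi\|_{L^2}$ and $\|\phi'\|_{L^2}$ stay $O(1)$, which they do by the $\|\phi\|_\Hi = 1$ normalization. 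I would present this carefully, being explicit that the lower bound on $\varkappa$ (hence the passage from $\varkappa^{-1/2}+\varkappa^{-1}$ to $\varkappa^{-1/2}$) uses $\varkappa \ge \varkappa_1 > 0$ established in Proposition~\ref{thm:L}.
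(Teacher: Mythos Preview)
Your proof is correct. The regularity argument (bootstrapping $\phi''\in L^2$ from the eigenvalue equation using $\mub\in\Co^{1,1}$ and boundedness of $H$ on $L^2$) matches the paper's.

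For the $L^2$ bound you take a genuinely different route. You test the eigenvalue equation in the $\Hi$-inner product to obtain $\int (\varphi')^2\mub^{-1}\,\d x\le\varkappa$ with $\varphi=\phi'\mub$, hence $\|\varphi'\|_{L^2}\le C\sqrt{\varkappa}$, and then feed this back into the identity $\varphi'=-\mub\big(\varkappa\phi+2\pi\beta H(\varphi)\big)$ (note: the sign in front of $H(\varphi)$ should be $+$, not $-$, since $\mathscr W\phi=-H(\varphi)$; this is harmless for the norm estimate) to extract $\varkappa\|\mub\phi\|_{L^2}\le C\sqrt{\varkappa}+C'$, giving $\|\phi\|_{L^2}\le C(\varkappa^{-1/2}+\varkappa^{-1})$. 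The paper instead tests directly in $L^2$: since $\mathscr L\phi=\varkappa\phi$, one has
\[
\varkappa\|\phi\|_{L^2}^2=\langle\phi,\mathscr L\phi\rangle_{L^2}=\langle\phi,\mathscr A\phi\rangle_{L^2}+2\pi\beta\langle\phi,\mathscr W\phi\rangle_{L^2},
\]
and both terms on the right are bounded by constants depending only on $\beta,V$, using the a priori bound $\|\phi\|_{L^\infty}\le 2\pi\delta^{-1/2}$ (which you also derive). This yields $\|\phi\|_{L^2}^2\le C\varkappa^{-1}$ uniformly for all $\varkappa>0$, whereas your argument produces the extra $\varkappa^{-1}$ term and must invoke the spectral gap $\varkappa\ge\varkappa_1>0$ from Proposition~\ref{thm:L} to absorb it. The paper's approach is thus shorter and self-contained; yours has the mild advantage of also producing the intermediate estimate $\|(\phi'\mub)'\|_{L^2}\le C\sqrt{\varkappa}$, which is essentially what is needed later in \eqref{L2bounds}.
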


\begin{proof}
First, since  $\displaystyle\int \phi \,\d\mub= 0$ and $\phi$ is continuous, there exists  $\xi\in\T$ such that $\phi(\xi)=0$.  Thus, by the Cauchy-Schwarz inequality,
\eq
\label{ineqLinftyL2}
\|\phi\|_{L^\infty} \le \sup_{x\in\T} \left| \int \bs 1_{[\xi,x]}  \phi' \,\d\theta\right| \le 2\pi \|\phi'\|_{L^2} . 
\qe
By Proposition~\ref{propEqM}(b), this yields in turn 
\begin{equation} \label{phi_infty}
\|\phi\|_{L^\infty} \le 2\pi \delta^{-1/2} \|\phi\|_\Hi = 2\pi/\sqrt{\delta} . 
\end{equation}

Since $\mub\in \Co^{1,1}(\T)$ according to  Proposition~\ref{prop:regmub} and  using that the Hilbert transform $\U$ preserves the ${L^2(\T)}$ norm, we see the functions $\U(\mub \phi') $ and $(\log\mub)' \phi'$ are in $L^2(\T)$. Together with the definition  \eqref{Lop} of $\mathscr L$, this implies that 
\begin{equation} \label{phi''}
-\phi'' = \varkappa \phi +2\pi\beta \U(\mub \phi') + (\log\mub)' \phi' 
\end{equation}
belongs to $L^2(\T)$.  Recalling \eqref{Aop}, an integration by parts shows that
\begin{align*}
|\langle \phi,\A\phi\rangle_{L^2}| & \leq 
\delta^{-1} \int \phi \A \phi\,  \d\mub  = \delta^{-1}\| \phi\|_\Hi^2 =\delta^{-1} .
\end{align*}
Moreover, by \eqref{Aop}, using Cauchy-Schwarz inequality and  \eqref{phi_infty}, we have
\begin{align*} 
|\langle \phi,\mathscr W\phi\rangle_{L^2}| & \leq 
\|\phi\|_{L^2}\| \phi'\mub\|_{L^2}\\
& \leq \delta^{-1/2}\|\phi\|_{L^\infty} \, \|\phi\|_{\Hi}\\
& \leq 2\pi \delta^{-1}.
\end{align*}
Put together, by \eqref{Ldecomposition},  this yields
$$
\|\phi\|_{L^2}^2  =\varkappa^{-1} \langle \phi ,\mathscr{L} \phi\rangle_{L^2} \leq \varkappa^{-1} \delta^{-1}\left( 1+ 4\pi^2 \beta\right)
$$
which  completes the proof. 
\end{proof}

We finally turn to the proof of the last statement of Proposition~\ref{BON} and thus complete the proof of Theorem~\ref{thm:clt}.
\begin{proof}[Proof of Proposition~\ref{BON} (d)] Assume $V\in \Co^{m,1}(\T)$ for some $m\geq 1$. In particular, Proposition~\ref{prop:regmub}  yields $\mub\in \Co^{m,1}(\T)$  and, thanks to Proposition~\ref{propEqM}(b), we also have $\log\mub\in \Co^{m,1}(\T)$ and thus $\|(\log\mub)^{(m+1)}\|_{L^\infty}<\infty$.

Starting from \eqref{phi''} and using  that  $\|\U\cdot\|_{L^2} \le \|\cdot\|_{L^2}  $, that $\|\phi_j\|_\Hi =1$ and Lemma~\ref{lem:phi},  we see there exists $C=C(\beta,V)>0$ such that, for any $j\geq 1$,
\begin{align}
\label{L2bounds}
\|\phi_j''\|_{L^2} & \leq \varkappa_j\|\phi_j\|_{L^2}+2\pi\beta\|\phi_j'\mub\|_{L^2}+\|(\log \mub)'\phi_j'\|_{L^2}\nonumber\\
& \leq \varkappa_j\|\phi_j\|_{L^2}+\big(2\pi\beta\delta^{-1/2} +\delta^{-1/2}\|(\log\mub)'\|_{L^\infty}\big) \|\phi_j\|_{\Hi}\nonumber\\
& \leq C\sqrt{\varkappa_j}.
\end{align}
Combined with \eqref{ineqLinftyL2} and  \eqref{ineqLip}, this yields that Proposition~\ref{BON} (d) holds true when $k=0$.


Next, we use that for any $\psi \in \Hi^{1}(\T)$,  we have by \eqref{ineqLinftyL2} 
$$
\| H\psi\|_{L^\infty} \le 2\pi  \|(H \psi)'\|_{L^2} =  2\pi \|\psi'\|_{L^2}.
$$
Thus, since $\mub \in \Co^{1,1}(\T)$ and $\phi_j'' \in L^2(\T)$, according to \eqref{phi''}, we have for every $j\geq 1$,
\begin{align}
\label{Hbounds}
\| H(\mub \phi_j')\|_{L^\infty} & \le {2\pi}\|(\mub\phi_j')'\|_{L^2}\nonumber\\
& \le {2\pi} \big( \|(\mub)'\|_{L^\infty}\|\phi'_j\|_{L^2} + \delta^{-1}\|\phi''_j\|_{L^2} \big) \nonumber\\
&  \le C \sqrt{\varkappa_j} 
\end{align}
for some $C=C(\beta,V)>0$; note that we used again that $\|\phi'_j\|_{L^2} \le \delta^{-1/2} \| \phi_j\|_{\Hi}$. By using this estimate in \eqref{phi''}   together with \eqref{phi_infty} and using the proposition for $k=0$, we obtain 
\[
\|\phi_j''\|_{L^\infty}\leq C\varkappa_j.
\]
This proves the proposition when $k=1$. Note that, in particular, $\phi_j'' \in L^\infty(\T)$.

Assume now that $m\geq 2$ so as to treat the case where $k=2$. Observe that, since $\phi''_j \in L^2(\T)$, the right hand side of equation \eqref{phi''} has a weak derivative in $L^2$ and we obtain, for any $j\ge 1$, 
\begin{equation} \label{phi'''}
-\phi_j''' = \varkappa_j \phi_j' +2\pi\beta \U(\mub \phi_j')' + (\log\mub)'' \phi_j' +   (\log\mub)' \phi_j'' .
\end{equation}
Together with \eqref{L2bounds} and the upper bounds used to prove it, this yields
\[
\| \phi_j'''\|_{L^2}  \le C \varkappa_j 
\]
and in particular $\phi_j'''\in L^2(\T)$. Similarly as in \eqref{Hbounds}, this implies in turn  that
\[
\| H(\mub \phi_j')'\|_{L^\infty}   \le C \varkappa_j .
\]
By using this estimate combined together with the proposition for $k=0$ and $k=1$, we obtain from \eqref{phi'''} that
\[
\|\phi_j'''\|_{L^\infty}\leq C\varkappa_j^{3/2}
\]
and the proof  of the proposition is complete when $k=2$.

The setting where $k\geq 3$ is proven inductively by using the same method, after  $k-1$ differentiations of formula  \eqref{phi''}.
 \end{proof}

\section{Continuity of the variance in the parameter $\beta\in[0,+\infty]$}
\label{sec:Variance}

In this final section, we  study the limits of $\sigma_\beta^V(\psi)^2$ as $\beta\to 0$ and $\beta\to\infty$. We provide sufficient conditions on $V$ so that the variance interpolates between the $L^2$ and the $\Hi^{1/2}$ (semi-)norms, as it is the case when $V=0$, see Lemma~\ref{lem:var0}.


\paragraph{Convention:}
 In this section, we denote  the Hilbert space $\Hi$ and the operators $\mathscr{L}$, $\mathscr{A}$, and $\mathscr{W}$  defined in the previous sections by $\Hi_\beta$,  $\mathscr{L}_\beta$, $\mathscr{A}_\beta$, and $\mathscr{W}_\beta$  respectively to emphasize on the dependence on the parameter $\beta\ge 0$.

First, let us record the following smoothing property of the operators $\mathscr{L}_\beta^{-1}$. 

\begin{lemma} \label{lem:Poincare}
Let $V\in \Co^{1,1}(\T)$. If $f \in\Hi_\beta$ for some $\beta>0$, then $\mathscr{L}_\beta^{-1} f \in \Hi^2(\T)$. 
\end{lemma}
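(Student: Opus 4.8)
The plan is to derive an elliptic regularity statement for $\mathscr{L}_\beta^{-1}$ directly from the defining equation. Set $f\in\Hi_\beta$ and let $\phi:=\mathscr{L}_\beta^{-1}f\in\Hi_\beta$, so that $\mathscr L_\beta\phi=f$ weakly. By \eqref{Lop} this means $-\phi''=f+2\pi\beta\,\U(\mub\phi')+(\log\mub)'\phi'$ in the distributional sense, so the whole content of the lemma is to check that the right-hand side lies in $L^2(\T)$, which then forces $\phi''\in L^2(\T)$ and hence $\phi\in\Hi^2(\T)$.

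First I would note that $\phi\in\Hi_\beta\subset\Hi^1(\T)$, so $\phi'\in L^2(\T)$, and $\phi$ is continuous (Sobolev embedding $\Hi^1(\T)\subset\Co^{1/2}(\T)$), in particular $\phi\in L^\infty(\T)$. Under the hypothesis $V\in\Co^{1,1}(\T)$, Proposition~\ref{prop:regmub} gives $\mub\in\Co^{1,1}(\T)$, and Proposition~\ref{propEqM}(b) gives $\delta\le\mub/(2\pi)\le\delta^{-1}$, so $\log\mub\in\Co^{1,1}(\T)$ as well; thus $(\log\mub)'\in L^\infty(\T)$ and $\mub\in L^\infty(\T)$. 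Now treat the three terms on the right-hand side separately: $f\in\Hi_\beta\subset L^2(\T)$ by assumption; $(\log\mub)'\phi'\in L^2(\T)$ since it is a bounded function times an $L^2$ function; and $\mub\phi'\in L^2(\T)$, so by the boundedness of the Hilbert transform on $L^2(\T)$ (indeed $\|\U\psi\|_{L^2}\le\|\psi\|_{L^2}$, as recorded after \eqref{FourierH}) we get $\U(\mub\phi')\in L^2(\T)$. Summing, $\phi''\in L^2(\T)$, hence $\phi\in\Hi^2(\T)$.

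There is essentially no obstacle here: the argument is the $k=0$ regularity bootstrap already carried out in the proof of Proposition~\ref{BON}(d) (compare \eqref{phi''}--\eqref{L2bounds}), now applied to a general right-hand side $f\in\Hi_\beta$ rather than to $\varkappa\phi$. The only point deserving a word of care is that $\mathscr L_\beta^{-1}$ is well-defined on $\Hi_\beta$ — this is Proposition~\ref{thm:L} (equivalently Proposition~\ref{BON}(a)--(c)), which requires $V\in\Co^{1,1}(\T)$, exactly the hypothesis we are assuming. One should also record that the conclusion is quantitative if desired: $\|\phi''\|_{L^2}\le\|f\|_{L^2}+\big(2\pi\beta\,\|\mub\|_{L^\infty}+\|(\log\mub)'\|_{L^\infty}\big)\delta^{-1/2}\|\phi\|_{\Hi_\beta}$, and $\|\phi\|_{\Hi_\beta}=\|\mathscr L_\beta^{-1}f\|_{\Hi_\beta}\le\varkappa_1^{-1}\|f\|_{\Hi_\beta}$, giving a bound on $\|\mathscr L_\beta^{-1}f\|_{\Hi^2(\T)}$ in terms of $\|f\|_{\Hi_\beta}$, but the bare statement only needs membership in $\Hi^2(\T)$.
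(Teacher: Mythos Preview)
Your argument is correct and takes a genuinely different route from the paper's. The paper proves this lemma via the eigenfunction expansion: writing $\mathscr{L}_\beta^{-1}f=\sum_j\varkappa_j^{-1}\langle f,\phi_j\rangle_{\Hi_\beta}\phi_j$, it differentiates term by term, invokes the bound $\|\phi_j''\|_{L^2}\le C\sqrt{\varkappa_j}$ from \eqref{L2bounds}, and closes with Cauchy--Schwarz together with $\sum_j\varkappa_j^{-1}<\infty$. Your approach instead runs the elliptic regularity bootstrap directly on the equation $\mathscr L_\beta\phi=f$, exactly the $k=0$ step of the proof of Proposition~\ref{BON}(d) but with the right-hand side $\varkappa\phi$ replaced by a general $f\in\Hi_\beta$; this is more self-contained, since it requires neither the Weyl asymptotics $\varkappa_j\sim\alpha j^2$ nor the trace-class property of $\mathscr L_\beta^{-1}$. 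The paper's route is shorter only because it recycles estimates already established earlier. One small point worth making explicit in your write-up: the passage from ``$\phi=\mathscr L_\beta^{-1}f$ in $\Hi_\beta$'' to the distributional identity $-\phi''=f+2\pi\beta\,\U(\mub\phi')+(\log\mub)'\phi'$ deserves a sentence --- e.g.\ by noting that the identity holds for each eigenfunction (Lemma~\ref{lem:phi}), hence for finite truncations, and then passing to the limit in $\Hi^1(\T)$ on the right-hand side and in the sense of distributions on the left.
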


\begin{proof}
If $f \in\Hi_\beta$, then by Proposition~\ref{BON} we have the convergent expansion in $\Hi_\beta$,
\[
\mathscr{L}_\beta^{-1} f= \sum_{j= 1}^\infty \frac{\langle f, \phi_j \rangle_{\Hi_\beta}}{\varkappa_j}\, \phi_j \,.
\]
By differentiating this formula and using the estimate \eqref{L2bounds} this shows that, if $V\in \Co^{1,1}(\T)$, there exists a constant $C=C(\beta,V) >0$ such that 
\[
\left\| \big(\mathscr{L}_\beta^{-1} f\big)'' \right\|_{L^2} \le \sum_{j= 1}^\infty \frac{|\langle f, \phi_j \rangle_{\Hi_\beta}|}{\varkappa_j} {\| \phi_j'' \|}_{L^2} \le C \sum_{j= 1}^\infty \frac{|\langle f, \phi_j \rangle_{\Hi_\beta}|}{\sqrt{\varkappa_j}} \le C {\| f\|}_{\Hi_\beta}.
\]
\end{proof}

\begin{proposition} \label{prop:sigma0}
If  $V\in \Co^{1,1}(\T)$  then we have for every  $\psi \in \Hi^1(\T)$,
\[
\lim_{\beta\to0} \sigma_\beta^V(\psi)  = \sigma_0^V(\psi). 
\]
\end{proposition}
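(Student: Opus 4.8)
\textbf{Proof strategy for Proposition~\ref{prop:sigma0}.}
The plan is to show that $\mathscr{L}_\beta^{-1}\psi$ converges to $\mathscr{L}_0^{-1}\psi=\mathscr{A}_0^{-1}\psi$ in a suitable sense as $\beta\to 0$, and then pass to the limit in the bilinear form defining $\sigma_\beta^V(\psi)^2$. Fix $\psi\in\Hi^1(\T)$ with $\int\psi\,\d\mu_0^V=0$ (otherwise recenter); note the centering condition also depends on $\beta$ through $\mub$, so a preliminary step is to replace $\psi$ by $\psi_\beta:=\psi-\int\psi\,\d\mub$ and observe, using the weak convergence $\mub\to\mu_0^V$ from Lemma~\ref{lem:weakcvg} together with the uniform density bounds of Proposition~\ref{propEqM}(b), that $\int\psi\,\d\mub\to 0$ and hence $\psi_\beta\to\psi$ in $\Hi^1(\T)$; this reduces matters to controlling $\sigma_\beta^V(\psi_\beta)^2$ with the common test function. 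Next I would set $u_\beta:=\mathscr{L}_\beta^{-1}\psi_\beta\in\Hi_\beta$, which by Lemma~\ref{lem:Poincare} lies in $\Hi^2(\T)$, and write the variance as $\sigma_\beta^V(\psi_\beta)^2=\langle u_\beta,\psi_\beta\rangle_{\Hi_\beta}=\int u_\beta'\psi_\beta'\,\d\mub$.

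The key analytic step is a uniform-in-$\beta$ a priori bound on $u_\beta$. Testing the weak equation $\mathscr{L}_\beta u_\beta=\psi_\beta$ against $u_\beta$ gives $\langle\mathscr{A}_\beta u_\beta,u_\beta\rangle_\Hi+2\pi\beta\langle\mathscr{W}_\beta u_\beta,u_\beta\rangle_\Hi=\langle\psi_\beta,u_\beta\rangle_\Hi$; since both operators are nonnegative (Lemma~\ref{lem:pop}) and $\langle\mathscr{A}_\beta u_\beta,u_\beta\rangle_\Hi=\int|(u_\beta'\mub)'|^2\,\d x/\mub\geq 0$ while the Poincar\'e-type inequality \eqref{ineqLinftyL2} together with the density bounds controls $\|u_\beta\|_\Hi$ by $\|\psi_\beta\|_\Hi$ uniformly, one gets $\|u_\beta\|_\Hi\leq C$ and, from the equation rearranged as in \eqref{phi''}, also $\|u_\beta''\|_{L^2}\leq C$ with $C$ independent of $\beta$ for $\beta$ in a bounded neighborhood of $0$ (the constants in Proposition~\ref{prop:regmub} and Proposition~\ref{propEqM}(b) are locally uniform in $\beta$, which I would verify by inspecting the proofs). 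Hence $(u_\beta)$ is bounded in $\Hi^2(\T)$, so along any subsequence it converges weakly in $\Hi^2(\T)$ and strongly in $\Hi^1(\T)$ to some limit $u_\ast$. Passing to the limit in the weak formulation $\int u_\beta'v'\,\d\mub+2\pi\beta\int(\text{bounded})=\int\psi_\beta v\,\d\mub$ for test functions $v$, using $\mub\to\mu_0^V$ uniformly (a consequence of Proposition~\ref{prop:regmub} and the weak convergence, upgraded via the uniform $\Co^{1}$ bounds) together with the vanishing of the $\beta$-prefactor, identifies $u_\ast$ as the unique solution of $-u_\ast''+V'u_\ast'=\psi$ with $\int u_\ast\,\d\mu_0^V=0$, i.e. $u_\ast=\mathscr{L}_0^{-1}\psi$; uniqueness of the limit then removes the subsequence.

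Finally I would conclude by writing
\[
\sigma_\beta^V(\psi_\beta)^2=\int u_\beta'\,\psi_\beta'\,\d\mub,\qquad
\sigma_0^V(\psi)^2=\int u_\ast'\,\psi'\,\d\mu_0^V,
\]
and estimating the difference by splitting into three pieces: $\int(u_\beta'-u_\ast')\psi_\beta'\,\d\mub$, which vanishes by the strong $\Hi^1$ convergence of $u_\beta$ and boundedness of $\psi_\beta'$ in $L^2$; $\int u_\ast'(\psi_\beta'-\psi')\,\d\mub$, which vanishes since $\psi_\beta\to\psi$ in $\Hi^1$; and $\int u_\ast'\psi'\,(\d\mub-\d\mu_0^V)$, which vanishes by the uniform convergence $\mub\to\mu_0^V$ (here $u_\ast'\psi'\in L^1$ suffices against a uniformly convergent density). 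This gives $\sigma_\beta^V(\psi)^2\to\sigma_0^V(\psi)^2$ and hence, by continuity of the square root, $\sigma_\beta^V(\psi)\to\sigma_0^V(\psi)$. The main obstacle I anticipate is establishing the \emph{locally uniform in $\beta$} regularity and positivity constants for $\mub$ near $\beta=0$ --- that is, tracking that $\delta$ in Proposition~\ref{propEqM}(b) and the $\Co^{1,1}$ norm bounds in Proposition~\ref{prop:regmub} do not degenerate as $\beta\to 0$; this should follow by revisiting those proofs, where the only $\beta$-dependence enters through $\beta U^{\mub}$ and $\beta\|\mub\|$-type terms which are harmless for bounded $\beta$, but it must be checked carefully.
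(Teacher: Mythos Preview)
Your compactness approach is sound and would work, but it is genuinely different from the paper's argument. The paper proceeds by a sandwich bound rather than by identifying the limit of $\mathscr{L}_\beta^{-1}\psi_\beta$: for the upper bound it uses the operator inequality $\mathscr{L}_\beta \ge \mathscr{A}_\beta$ on $\Hi_\beta$ together with the integration-by-parts identity \eqref{IPP_A} to get $\sigma_\beta^V(\psi)^2 = \langle\psi_\beta,\mathscr{A}_\beta\mathscr{L}_\beta^{-1}\psi_\beta\rangle_{L^2(\mub)}\le \|\psi_\beta\|_{L^2(\mub)}^2$ directly, after which only weak convergence of $\mub$ is needed; for the lower bound it rewrites $\sigma_\beta^V(\psi)^2 = \|\psi_\beta\|_{L^2(\mub)}^2 - 2\pi\beta\int\psi_\beta\,\mathscr{W}_\beta(\mathscr{L}_\beta^{-1}\psi_\beta)\,\d\mub$ and shows the correction term is $O(\beta)$ via Cauchy--Schwarz in the $\mathscr{L}_\beta^{-1}$-form, applying the same upper bound to an auxiliary primitive $\vartheta_\beta$. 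The only uniform-in-$\beta$ input the paper needs is an upper bound $\|\mub\|_{L^\infty}\le C$ for $\beta\in[0,1]$, which it extracts in one line from the Euler--Lagrange equation via $C_\beta^V\le 2\beta\,\mathcal{E}(\mu_0^V)$.

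Your route, by contrast, requires a uniform lower density bound, uniform $\Co^{1}$ control of $\mub$, a uniform spectral gap for $\mathscr{A}_\beta$ (to get $\|u_\beta\|_{\Hi_\beta}$ bounded; this is implicit in your Poincar\'e step and does follow from the uniform density bounds, but deserves to be stated), and Arzel\`a--Ascoli to upgrade weak convergence of $\mub$ to uniform convergence. These all hold---and you correctly identify tracking them as the main obstacle---but establishing them carefully is already more work than the paper's entire proof. What your approach buys is that it actually identifies the limit $u_\ast = \mathscr{A}_0^{-1}\psi_0$ of $\mathscr{L}_\beta^{-1}\psi_\beta$ in $\Hi^1(\T)$, which is strictly more information than the convergence of the scalar variance alone, and the method would transfer to other perturbative questions about $\mathscr{L}_\beta^{-1}$ near $\beta=0$.
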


\begin{proof}
Let $\psi\in\Hi^1(\T)$ and set $\psi_\beta:=\psi-\int \psi\,\d\mub$ for any $\beta \ge 0$.  In particular $\psi_\beta$ is continuous on $\T$ and Lemma \ref{lem:weakcvg} yields 
\eq
\label{depBeta}
\lim_{\beta\to 0}\|\psi_\beta\|_{L^2(\mub)} =\|\psi_0\|_{L^2(\mu_0^V)} =\sigma_0^V(\psi).
\qe
We also use the integration by part formula 
\eq
\label{IPP_A}
\int \phi\,\mathscr A_\beta\psi\, \d\mub=\int \phi'\,\psi'\,\d\mub
\qe 
which holds for any $\phi\in \Hi^1(\T)$ and $\psi\in\Hi^2(\T)$.
Note that $\psi_\beta\in\Hi_\beta$ and, by Lemma~\ref{lem:Poincare}, that $ \mathscr{L}_\beta^{-1} \psi_\beta\in\Hi^2$. Since $\mathscr{L}_\beta \ge \mathscr{A}_\beta>0$ as operators on $\Hi_\beta$, we obtain together with \eqref{IPP_A},
\begin{align}
\label{ineqSigmaBeta}
 \sigma_\beta^V(\psi)^2 &  = \left\langle \psi_\beta , \mathscr{L}_\beta^{-1} \psi_\beta \right\rangle_{\Hi_\beta} \nonumber\\
 &= \left\langle \psi_\beta , \mathscr A_\beta \mathscr{L}_\beta^{-1} \psi_\beta \right\rangle_{L^2(\mub)} \nonumber\\
 & \le  \| \psi_\beta \|_{L^2(\mub)}^2 \,.
\end{align}
 Combined with \eqref{depBeta}, this gives 
$$
\limsup_{\beta\to0}\sigma_\beta^V(\psi) \leq \sigma_0^V(\psi).
$$

As for the lower bound, by using \eqref{IPP_A}  again, that $\mathscr{A}_\beta = \mathscr{L}_\beta - 2\pi \beta \mathscr{W}_\beta $ and $\mathscr{L}_\beta\mathscr{L}_\beta^{-1}\phi=\phi$ for every $\phi\in\Hi^1(\T)$, we have
 \begin{align}
 \label{sigmaA}
 \sigma_\beta^V(\psi)^2 
 & =   \int \psi_\beta \mathscr{A}_\beta(\mathscr{L}_\beta^{-1} \psi_\beta) \,\d\mub \nonumber\\
 & = \|\psi_\beta\|_{L^2(\mub)}^2- 2\pi\beta   \int \psi_\beta \mathscr{W}_\beta(\mathscr{L}_\beta^{-1} \psi_\beta)\,\d\mub.
 \end{align}
 Since $\mathscr W_\beta(\phi)=-H(\phi'\mub)$ and the Hilbert transform satisfies $H^*=-H$ on $L^2(\T)$,
 \eq
 \label{sigmaB}
  \int \psi_\beta \mathscr{W}_\beta(\mathscr{L}_\beta^{-1} \psi_\beta)\,\d\mub=\int H(\psi_\beta\mub) (\mathscr{L}_\beta^{-1} \psi_\beta)'\,\d\mub.
 \qe
Since $\mub$ is bounded by Proposition~\ref{propEqM}, we have $\psi_\beta\mub\in L^2(\T)$, and so does $H(\psi_\beta\mub)$ which moreover satisfies $\int_\T H(\psi_\beta\mub)\,\d x=0$. As a consequence, $H(\psi_\beta\mub)$ has a primitive $\vartheta_\beta :\T \to\R$ that we can pick so that $\int  \vartheta_\beta \,\d\mub=0$. Thus, $ \vartheta_\beta\in\Hi_\beta$ and we  obtain by using the Cauchy-Schwarz inequality (recalling  that $\mathscr{L}_\beta^{-1} >0$ on $\Hi_\beta$) and \eqref{ineqSigmaBeta}, 
 \begin{align}
 \label{sigmaC}
\left|\int H(\psi_\beta\mub) (\mathscr{L}_\beta^{-1} \psi_\beta)'\,\d\mub \right|  & = \left|\left\langle \vartheta_\beta , \mathscr{L}_\beta^{-1} \psi_\beta \right\rangle_{\Hi_\beta} \right|\nonumber\\
& \leq \sqrt{  \left\langle \vartheta_\beta , \mathscr{L}_\beta^{-1} \vartheta_\beta \right\rangle_{\Hi_\beta} \left\langle \psi_\beta , \mathscr{L}_\beta^{-1} \psi_\beta \right\rangle_{\Hi_\beta}}\nonumber\\
& \leq \| \vartheta_\beta \|_{L^2(\mub)}\| \psi_\beta \|_{L^2(\mub)}.
 \end{align}
 To bound the term $ \| \vartheta_\beta \|_{L^2(\mub)}$, first note that the variational constant $C_\beta^V$ from \eqref{EL} satisfies
\[
C_\beta^V  = 2 F_\beta(\mub) -   \mathcal{K}(\mub | \mu_0^V) \le 2F_\beta(\mu_0^V) = 2\beta  \mathcal{E}(\mu^V_0),
\]
where we used that $\mathcal{K}(\mub | \mu_0^V)\geq 0$, that $\mub$ is the  minimizer of $F_\beta$, and that $\mathcal{K}(\mu_0^V | \mu_0^V)=0$. Thus, since $U^{\mub}\geq 0$, this yields together with  Proposition~\ref{propEqM}(c) that $\mub(x) \le \e^{2\beta   \mathcal{E}(\mu_0^V)  - V(x)}$ on $\T$. In particular, there exists $C=C(V)>0$ such that, for any $\beta\in[0,1]$, we have $\|\mub\|_{L^\infty}\leq C^2/\pi$. As a consequence, using \eqref{ineqLinftyL2}, we obtain  for  $\beta\in[0,1]$,
\begin{align*}
 \| \vartheta_\beta\|_{L^2(\mub)} & \leq  \| \vartheta_\beta\|_{L^\infty}\\
 & \leq 2\pi \| H(\psi_\beta\mub)\|_{L^2}\\
 & = 2\pi \| \psi_\beta\mub\|_{L^2}\\
 & \leq  C\| \psi_\beta\|_{L^2(\mub)}.
\end{align*}
Combined with \eqref{sigmaA}--\eqref{sigmaC}, this finally yields 
$$
\liminf_{\beta\to 0} \sigma_\beta^V(\psi)^2 \geq  \liminf_{\beta\to 0}  \|\psi_\beta\|_{L^2(\mub)}^2\left(1- 2\pi C\beta \right)=\sigma_0^V(\psi)^2,
$$
where the last identity follows from \eqref{depBeta}. The proof of the  proposition is thus complete. 
\end{proof}

\begin{proposition} \label{prop:sigmainfty}
If $V\in \Co^{1,1}(\T)$ and $\beta \min_\T \mu_\beta^{V}\to\infty$ as $\beta\to\infty$, then for any  $\psi \in \Hi^2(\T)$,
\begin{equation} \label{sigma10}
\lim_{\beta\to\infty} \beta \sigma_\beta^{V}(\psi)^2 =  \|\psi\|_{\Hi^{1/2}}^2  . 
\end{equation}
If we assume instead that $\beta \min_\T \mu_\beta^{\beta V}\to\infty$ as $\beta\to\infty$, then we also have
\begin{equation} \label{sigma11}
\lim_{\beta\to\infty} \beta \sigma_\beta^{\beta V}(\psi)^2 =  \|\psi\|_{\Hi^{1/2}}^2  . 
\end{equation}
\end{proposition}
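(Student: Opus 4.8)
The plan is to sandwich $\beta\,\sigma_\beta^V(\psi)^2$ between two quantities that both converge to $\|\psi\|_{\Hi^{1/2}}^2$. I will establish \eqref{sigma10} and \eqref{sigma11} by the same argument, with $\mub$ (and the hypothesis $\beta\min_\T\mub\to\infty$) replaced throughout by $\mu_\beta^{\beta V}$ (and $\beta\min_\T\mu_\beta^{\beta V}\to\infty$); recall that $V\in\Co^{1,1}(\T)$ forces $\mub,\mu_\beta^{\beta V}\in\Co^{1,1}(\T)$ and bounded away from $0$ by Proposition~\ref{prop:regmub} and Proposition~\ref{propEqM}(b). Write $\psi_\beta:=\psi-\int\psi\,\d\mub$, so that $\psi_\beta'=\psi'$ and $H\psi_\beta=H\psi$ since the Hilbert transform kills constants, and recall $\sigma_\beta^V(\psi)^2=\langle\psi_\beta,\mathscr L_\beta^{-1}\psi_\beta\rangle_{\Hi_\beta}$ with $\mathscr L_\beta=\mathscr A_\beta+2\pi\beta\,\mathscr W_\beta$, where $\mathscr A_\beta,\mathscr W_\beta\ge 0$ on $\Hi_\beta$ by Lemma~\ref{lem:pop}.

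The crucial point is an exact, $\beta$- and $\mub$-independent formula for the $\mathscr W_\beta$-quadratic form of the inverse. Since $\mathscr W_\beta\phi=-H(\phi'\mub)$ and $H^{-1}=-H$ on $L^2_0(\T)$, the equation $\mathscr W_\beta g_\beta=\psi_\beta$ amounts to $g_\beta'\mub=H\psi_\beta+a_\beta$, where $a_\beta$ is a constant fixed by the periodicity of $g_\beta$; as $\mub$ is bounded below this admits a solution $g_\beta\in\Hi_\beta$, so that $\mathscr W_\beta^{-1}$ is bounded on $\Hi_\beta$. It follows that
\[
\langle\psi_\beta,\mathscr W_\beta^{-1}\psi_\beta\rangle_{\Hi_\beta}=\int_\T\psi_\beta'\,(H\psi_\beta+a_\beta)\,\d x=\int_\T\psi'\,H\psi\,\d x=2\pi\|\psi\|_{\Hi^{1/2}}^2,
\]
because $\int_\T\psi'\,\d x=0$ and $\int_\T\psi'\,H\psi\,\d x=2\pi\sum_{k\neq 0}|k|\,|\hat\psi_k|^2$. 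Combined with the form inequality $\mathscr L_\beta\ge 2\pi\beta\,\mathscr W_\beta>0$ on $\Hi_\beta$ and operator monotonicity of the inverse, giving $\mathscr L_\beta^{-1}\le(2\pi\beta)^{-1}\mathscr W_\beta^{-1}$, we obtain the upper bound
\[
\beta\,\sigma_\beta^V(\psi)^2\le\tfrac1{2\pi}\,\langle\psi_\beta,\mathscr W_\beta^{-1}\psi_\beta\rangle_{\Hi_\beta}=\|\psi\|_{\Hi^{1/2}}^2\qquad\text{for every }\beta>0,
\]
consistent with Lemma~\ref{lem:var0} in the case $V=0$.

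For the matching lower bound I will use the variational representation
\[
\beta\,\langle\psi_\beta,\mathscr L_\beta^{-1}\psi_\beta\rangle_{\Hi_\beta}=\sup_{\phi}\Big\{2\langle\psi_\beta,\phi\rangle_{\Hi_\beta}-\tfrac1\beta\langle\mathscr A_\beta\phi,\phi\rangle_{\Hi_\beta}-2\pi\langle\mathscr W_\beta\phi,\phi\rangle_{\Hi_\beta}\Big\}
\]
and insert the near-optimal competitor $\phi=\tfrac1{2\pi}g_\beta$, which lies in the form domains of $\mathscr A_\beta$ and $\mathscr W_\beta$ because $\psi\in\Hi^2(\T)$ and $\mub\in\Co^{1,1}(\T)$. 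Using $\mathscr W_\beta g_\beta=\psi_\beta$ and the identity above, the two leading terms equal $2\|\psi\|_{\Hi^{1/2}}^2$ and $\|\psi\|_{\Hi^{1/2}}^2$ respectively, while, since $(g_\beta'\mub)'=H(\psi')$ and $H$ is an isometry of $L^2_0(\T)$, the Sturm–Liouville remainder satisfies
\[
\tfrac1\beta\langle\mathscr A_\beta\phi,\phi\rangle_{\Hi_\beta}=\frac1{4\pi^2\beta}\int_\T\frac{|H(\psi')|^2}{\mub}\,\d x\le\frac1{4\pi^2\,\beta\min_\T\mub}\int_\T|\psi'|^2\,\d x\xrightarrow[\beta\to\infty]{}0
\]
by the hypothesis. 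Hence $\liminf_{\beta\to\infty}\beta\,\sigma_\beta^V(\psi)^2\ge\|\psi\|_{\Hi^{1/2}}^2$, which together with the upper bound yields \eqref{sigma10}; \eqref{sigma11} follows verbatim with $\mu_\beta^{\beta V}$ in place of $\mub$ and its hypothesis.

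I expect the real work to be functional-analytic rather than computational: verifying that $\mathscr W_\beta$ is strictly positive on $\Hi_\beta$ with a genuinely bounded inverse (not merely positive), that the manipulations defining $g_\beta$ and the identity $\langle\psi_\beta,\mathscr W_\beta^{-1}\psi_\beta\rangle_{\Hi_\beta}=\int_\T\psi'H\psi\,\d x$ are legitimate, and that $\tfrac1{2\pi}g_\beta$ is an admissible test function in the variational principle — this is exactly where $\psi\in\Hi^2(\T)$, $V\in\Co^{1,1}(\T)$ and the two-sided bounds on $\mub$ from Proposition~\ref{propEqM}(b) are used. The hypotheses $\beta\min_\T\mub\to\infty$ and $\beta\min_\T\mu_\beta^{\beta V}\to\infty$ serve no other purpose than to make the Sturm–Liouville remainder in the lower bound vanish.
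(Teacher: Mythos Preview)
Your proof is correct and, for the upper bound, essentially identical to the paper's: both use the operator inequality $\mathscr L_\beta\ge 2\pi\beta\,\mathscr W_\beta$ together with the explicit computation $\langle\psi_\beta,\mathscr W_\beta^{-1}\psi_\beta\rangle_{\Hi_\beta}=2\pi\|\psi\|_{\Hi^{1/2}}^2$. One small imprecision: your equation $\mathscr W_\beta g_\beta=\psi_\beta$ is only solvable modulo a constant (the range of $\mathscr W_\beta$ lies in $L^2_0(\T)$, whereas $\psi_\beta$ need not), but this is harmless since the $\Hi_\beta$ inner product only sees derivatives, and indeed $(\mathscr W_\beta g_\beta)'=\psi_\beta'$ is all you use.

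For the lower bound, your route is genuinely different from the paper's. The paper rewrites $\beta\sigma_\beta^V(\psi)^2=\|\psi\|_{\Hi^{1/2}}^2+\langle H(\psi'),\mathscr A_\beta\mathscr L_\beta^{-1}\psi_\beta\rangle_{L^2}$ via Hilbert-transform manipulations, then controls the remainder by Cauchy--Schwarz in the $\mathscr L_\beta^{-1}$ form, producing an inequality of the type $\beta\sigma_\beta^V(\psi)^2\ge\|\psi\|_{\Hi^{1/2}}^2-(\beta\delta)^{-1/2}\|\psi\|_{\Hi^1}\,\sigma_\beta^V(\psi)$ which one must solve as a quadratic in $\sqrt\beta\,\sigma_\beta^V(\psi)$. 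Your variational argument with the explicit competitor $\tfrac{1}{2\pi}g_\beta$ is cleaner: it bypasses the quadratic step entirely and displays the remainder $\tfrac{1}{4\pi^2\beta}\int|H\psi'|^2/\mub\,\d x$ directly. In fact your lower bound, as written, requires only $\psi\in\Hi^1(\T)$ for $g_\beta$ to lie in the form domain of $\mathscr A_\beta$ (since $(g_\beta'\mub)'=H(\psi')\in L^2$), whereas the paper's argument needs $\psi\in\Hi^2(\T)$ to ensure $\vartheta_\beta=H(\psi')/\mub\in\Hi_\beta$; so your approach is in principle slightly sharper in the regularity required of $\psi$. Your final paragraph correctly identifies the only delicate points, which are shared with the paper's proof.
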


\begin{remark}
Let us comment on the assumptions of Proposition~\ref{prop:sigmainfty}. First,  the condition that $\psi \in \Hi^2(\T)$   seems only technical and we expect the result still holds provided that   $\psi \in \Hi^1(\T)$. Next, we know from Proposition~\ref{propEqM}(b)  that $\min_\T \mu_\beta^{V}>0$ and $\min_\T \mu_\beta^{\beta V}>0$ for every  fixed $\beta>0$.  However, we expect that the later quantity decays to zero as $\beta\to\infty$.
Indeed, one can verify from the Euler-Lagrange equation that if $V\in \Co^{1,1}(\T)$ is not constant, then the minimizer $\mu_\infty^V$ of the functional \eqref{Finfty} does not have full support on $\T$.
On the other-hand, if the potential $V$ is fixed, then we already know from Lemma~\ref{lem:weakcvg} that $\mu_\beta^V \to \frac{\d x}{2\pi}$ weakly. In Lemma~\ref{lem:Lpcvg} below, we establish that, if this convergence holds in $L^p$ for  $p>1$ with a rate of at most  $c\log\beta/\beta$ for $c>0$ small enough, then the hypothesis that $\beta \min_\T \mu_\beta^{V}\to+\infty$ as $\beta\to+\infty$ is satisfied.   \end{remark}

We are now ready to prove of Proposition~\ref{prop:sigmainfty}.

\begin{proof}
We start by proving  \eqref{sigma10}. Recall that by definition, we have for every $\phi\in\Hi^1(\T)$, 
 $$
 \|\phi\|_{\Hi^{1/2}}^2=\langle \phi',H(\phi)\rangle_{L^2}=-\langle H(\phi'),\phi\rangle_{L^2} . 
 $$
 By Lemma~\ref{lem:pop}, the operator $\mathscr{W}_\beta^{-1}$ is well-defined on $\Hi_\beta$. Moreover, by \eqref{Aop} and 
since $\U^{-1} =-\U$ on $L^2_0(\T)$,  we have for  every $\phi \in \Hi_\beta$, 
\[
\big(\mathscr{W}_\beta^{-1}\phi \big)' \mub  = - \U\phi  + \int_\T \big(\mathscr{W}_\beta^{-1}\phi \big)' \d\mub . 
\]
Recall that $\psi_\beta=\psi-\int \psi\,\d\mub$ for any $\beta \ge 0$ and  that $\psi_\beta\in\Hi_\beta$. 
Using further that $\mathscr{L}_\beta \ge 2\pi \beta\mathscr{W}_\beta $ as operators on $\Hi_\beta$, we  obtain for every  $\beta>0$ the upper bound,
\begin{align}
\label{UPH12}
\beta \sigma_\beta^V(\psi)^2  &= \beta \left\langle \psi_\beta , \mathscr{L}_\beta^{-1} \psi_\beta \right\rangle_{\Hi_\beta} \\
& \le \frac{1}{2\pi} \left\langle \psi_\beta , \mathscr{W}_\beta^{-1} \psi_\beta \right\rangle_{\Hi_\beta} \nonumber\\
& = -\left\langle \psi_\beta' , H( \psi_\beta) \right\rangle_{L^2}\nonumber\\
& = \|\psi_\beta\|_{\Hi^{1/2}}^2\nonumber\\
& = \|\psi\|_{\Hi^{1/2}}^2.
\end{align}

As for the lower bound, recalling that $\mathscr W_\beta(\phi)=-H(\phi'\mub)$ and writing $2\pi\beta\mathscr W_\beta=\mathscr L_\beta-\mathscr A_\beta$, 
 since $\U$ is an isometry of $L^2_0(\T)$ and $ \widehat{\psi'_0}=0$,
we obtain
\begin{align}
\label{LBH12a}
\beta \sigma_\beta^V(\psi)^2  &  = 2\pi\beta   \left\langle \psi' , (\mathscr{L}_\beta^{-1} \psi_\beta)'\mub \right\rangle_{L^2} \nonumber\\
& = 2\pi\beta   \left\langle  H(\psi') , H\big((\mathscr{L}_\beta^{-1} \psi_\beta)'\mub\big) \right\rangle_{L^2} \nonumber\\
& = -2\pi\beta   \left\langle  H(\psi') , \mathscr W_\beta(\mathscr{L}_\beta^{-1} \psi_\beta) \right\rangle_{L^2}\nonumber\\
& = -   \left\langle  H(\psi') , \psi_\beta \right\rangle_{L^2}+   \left\langle  H(\psi') , \mathscr A_\beta(\mathscr{L}_\beta^{-1} \psi_\beta) \right\rangle_{L^2}\nonumber\\
& =  \|\psi\|_{\Hi^{1/2}}^2+   \left\langle  H(\psi') , \mathscr A_\beta(\mathscr{L}_\beta^{-1} \psi_\beta) \right\rangle_{L^2}.
\end{align}
Now, we set 
$$
\vartheta_\beta :=\frac{H(\psi')}{\mub}.
$$
Since by assumption $\psi''\in L^2(\T)$, $\|(\mub)'\|_{L^\infty}<\infty$ by Proposition~\ref{prop:regmub} and $H$ maps $L^2(\T)$ into  $L^2_0(\T)$,  it easily follows from Proposition~\ref{propEqM}(b) that $\vartheta_\beta\in\Hi_\beta.$  Moreover, since $\mathscr{L}_\beta^{-1}(\psi_\beta) \in \Hi^2(\T)$ according to Lemma~\ref{lem:Poincare}, we can use Remark~\ref{IPP_A}  and the Cauchy-Schwarz inequality (recalling  that $\mathscr{L}_\beta^{-1} >0$ on $\Hi_\beta$) to obtain 
\begin{align}
\label{LBH12b}
 \left\langle  H(\psi') , \mathscr A_\beta(\mathscr{L}_\beta^{-1} \psi_\beta) \right\rangle_{L^2} & =  \frac1{2\pi}\left\langle  \vartheta_\beta , \mathscr A_\beta(\mathscr{L}_\beta^{-1} \psi_\beta) \right\rangle_{L^2(\mub)}\nonumber\\
& =  -\frac1{2\pi}\left\langle  \vartheta_\beta, \mathscr{L}_\beta^{-1} \psi_\beta \right\rangle_{\Hi_\beta} \nonumber\\
& \geq -\frac1{2\pi}\sigma_\beta^V(\psi)\sigma_\beta^V(\vartheta_\beta).
\end{align}
Next, using \eqref{ineqSigmaBeta} and Proposition~\ref{propEqM}(b),  we then have 
$$
\frac1{2\pi}\sigma_\beta^V(\vartheta_\beta) \leq \|\vartheta_\beta\|_{L^2(\mub)} \leq\delta^{-1/2}\|H(\psi')\|_{L^2}=\delta^{-1/2}\|\psi\|_{\Hi^1}
$$
for some $\delta=\delta(\beta)>0$ that satisfies, by assumption, $\beta\delta\to\infty$ as $\beta\to\infty$. Combined with \eqref{LBH12a}--\eqref{LBH12b} this then yields 
$$
\beta\sigma_\beta^V(\psi)^2\geq \|\psi\|^2_{\Hi^{1/2}}-\delta^{-1/2}\|\psi\|_{\Hi^1}\sigma_\beta^V(\psi).
$$
By computing the roots of the polynomial function $x\mapsto x^2-(\beta\delta)^{-1/2}\|\psi\|_{\Hi^1}x+\|\psi\|^2_{\Hi^{1/2}}$ this provides in turn,
$$
\liminf_{\beta\to+\infty}\sqrt{\beta}\sigma_\beta^V(\psi)\geq \liminf_{\beta\to+\infty} \sqrt{ \| \psi\|_{\Hi^{1/2}}^2 + \frac{ \|\psi\|_{\Hi^{1}}^2}{4\beta\delta}} - \frac{ \|\psi\|_{\Hi^{1}}}{2\sqrt{\beta\delta}} =\| \psi\|_{\Hi^{1/2}}
$$
and, together with the upper bound \eqref{UPH12}, the claim \eqref{sigma10} is proven. 

\medskip

Since the proof of \eqref{sigma11} is identical to the one  of  \eqref{sigma10} after replacing  $\mub$ by $\mu_{\beta}^{\beta V}$ everywhere in the above arguments, the proposition is obtained.  
 \end{proof}

\begin{lemma} \label{lem:Lpcvg}
Let $V\in\Hi^1(\T)$ and $p>1$. Suppose that  for all $\beta$ sufficiently large,
$$
{\| \mub-\tfrac{\d x}{2\pi}\|}_{L^p}\leq \kappa_p \frac{\log\beta}{\beta}
$$
for a constant $\kappa_p >0$ which is sufficiently small, then $\beta\inf_\T\mub\to+\infty$ as $\beta\to+\infty$.
\end{lemma}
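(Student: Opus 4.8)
The plan is to insert the $L^p$-rate hypothesis into the Euler--Lagrange equation \eqref{EL}, exploiting that the logarithmic kernel belongs to every $L^q(\T)$ with $q<\infty$. First I would note that $V\in\Hi^1(\T)$ is continuous, so $\mub$ exists, and by the argument in the proof of Proposition~\ref{prop:regmub} we even have $\mub\in\Hi^1(\T)\subset\Co^{1/2}(\T)$; in particular $\min_\T\mub$ is attained and \eqref{EL} holds everywhere on $\T$. Rewriting \eqref{EL} as $\log\mub(x)=C_\beta^V-V(x)-2\beta U^{\mub}(x)$ and minimizing over $x$ gives
\[
\log\big(\beta\min_\T\mub\big)=\log\beta+C_\beta^V-\max_{\T}\big(V+2\beta U^{\mub}\big)\ \geq\ \log\beta+C_\beta^V-\|V\|_{L^\infty}-2\beta\max_\T U^{\mub},
\]
and it suffices to show the last expression tends to $+\infty$ as $\beta\to\infty$.

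Next I would control $U^{\mub}$ using the hypothesis. Since $U^{\frac{\d x}{2\pi}}\equiv\log 2$ by \eqref{U_uniform}, we have
\[
U^{\mub}(x)-\log 2=\int_\T\log\Big|\sin\big(\tfrac{x-y}{2}\big)\Big|^{-1}\Big(\mub(y)-\tfrac1{2\pi}\Big)\,\d y ,
\]
so Hölder's inequality with $\tfrac1p+\tfrac1q=1$ yields $\|U^{\mub}-\log 2\|_{L^\infty}\leq\Lambda_q\,\big\|\mub-\tfrac1{2\pi}\big\|_{L^p}$, where $\Lambda_q:=\big\|\log|\sin(\cdot/2)|^{-1}\big\|_{L^q(\T)}<\infty$ precisely because $q<\infty$. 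By assumption this gives $\max_\T U^{\mub}\leq\log 2+\Lambda_q\kappa_p\tfrac{\log\beta}{\beta}$ for $\beta$ large, hence $2\beta\max_\T U^{\mub}\leq 2\beta\log 2+2\Lambda_q\kappa_p\log\beta$.

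Then I would bound $C_\beta^V$ from below by integrating \eqref{EL} against $\mub$ (legitimate since $F_\beta^V(\mub)<\infty$ makes $\cE(\mub)$, $\int V\,\d\mub$ and $\int\log\mub\,\d\mub$ finite), which gives $C_\beta^V=2\beta\,\cE(\mub)+\int V\,\d\mub+\int\mub\log\mub\,\d x$. Since $\tfrac{\d x}{2\pi}$ minimizes $\cE$ with $\cE(\tfrac{\d x}{2\pi})=\log 2$ we have $\cE(\mub)\geq\log 2$, while $\|V\|_{L^\infty}<\infty$ and $t\log t\geq-\tfrac1e$ give $\int V\,\d\mub\geq-\|V\|_{L^\infty}$ and $\int_\T\mub\log\mub\,\d x\geq-\tfrac{2\pi}{e}$; hence $C_\beta^V\geq 2\beta\log 2-\|V\|_{L^\infty}-\tfrac{2\pi}{e}$. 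Feeding this and the previous bound into the first display, the two $2\beta\log 2$ terms cancel and we get
\[
\log\big(\beta\min_\T\mub\big)\ \geq\ (1-2\Lambda_q\kappa_p)\log\beta-2\|V\|_{L^\infty}-\tfrac{2\pi}{e},
\]
which tends to $+\infty$ as $\beta\to\infty$ as soon as $\kappa_p<\tfrac1{2\Lambda_q}$, i.e. once $\kappa_p$ is small enough. The one genuinely delicate point is exactly this cancellation: the dominant term $2\beta\log 2$ of $C_\beta^V$ must be exactly offset by the dominant term $2\beta\log 2$ of $2\beta U^{\mub}$, so that the surviving quantity has size $(1-2\Lambda_q\kappa_p)\log\beta$; the whole argument then comes down to ensuring the $O(\kappa_p\log\beta)$ error coming from the $L^p$ rate is beaten by the $\log\beta$ gain, which is what the smallness assumption on $\kappa_p$ provides. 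All remaining estimates are routine.
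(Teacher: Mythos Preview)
Your proof is correct and follows essentially the same approach as the paper's: both exponentiate the Euler--Lagrange equation, lower-bound $C_\beta^V$ via $\cE(\mub)\geq\cE(\tfrac{\d x}{2\pi})=\log 2$, and control $\|U^{\mub}-\log 2\|_{L^\infty}$ by pairing the logarithmic kernel in $L^{p/(p-1)}$ against $\mub-\tfrac1{2\pi}$ in $L^p$ (the paper phrases this as Young's convolution inequality rather than H\"older), so that the two $2\beta\log 2$ contributions cancel and a $(1-c\kappa_p)\log\beta$ term survives. Your lower bound on $C_\beta^V$ via $t\log t\geq -1/e$ is a minor variant of the paper's use of $\H(\mub|\mu_0^V)\geq 0$, but the substance is identical.
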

\begin{proof} 
Recall that $U^{\frac{\d x}{2\pi}}=\log2$. Since $\H(\mub|\mu_0^V)\geq 0$ and $\tfrac{\d x}{2\pi}$ minimizes $\cE$, we have for $\beta>0$,
$$
C_\beta^V \geq 2\beta \cE(\mub) \geq 2\beta\log 2 . 
$$
From  \eqref{EL}, which holds for all $x\in\T$ if $V\in \Hi^1(\T)$ according to Proposition~\ref{prop:regmub}, the density $\mub$ satisfies for all $x\in\T$, 
$$
\mub(x)=\e^{C_\beta^V-2\beta U^{\mub}(x) -V(x)}\geq \e^{2\beta (U^{\frac{\d x}{2\pi}}-U^{\mub}(x))} \inf_{\T} \e^{-V}.
$$
Next, notice  that the mapping $g$ defined in \eqref{g} is $L^p$ for any $p>0$. Using Young's convolution inequality we obtain, for every $p>1$, 
$$
{\| U^{\mub}-U^{\frac{\d x}{2\pi}}\|}_{L^\infty}={\|g*(\mub-\tfrac{1}{2\pi})\|}_{L^\infty}\leq \|g\|_{L^{\frac{p}{p-1}}}{\|\mub-\tfrac{1}{2\pi}\|}_{L^p}
$$
and the lemma follows.
\end{proof}

 \small{
  \setlength{\bibsep}{.5em}
  \bibliographystyle{abbrvnat}%

}

\end{document}